\newcommand{\ba}{\mathbf{A}}
\newcommand{\bb}{\mathbf{B}}
\newcommand{\bc}{\mathbf{C}}
\newcommand{\be}{\mathbf{E}}
\newcommand{\bx}{\mathbf{X}}
\newcommand{\bh}{\mathbf{H}}
\newcommand{\bk}{\mathbf{k}}
\newcommand{\bmm}{\mathbf{m}}
\newcommand{\bn}{\mathbf{n}}
\newcommand{\der}{\delta}
\newcommand{\hbb}{\hat{\mathbf{B}}}
\newcommand{\ott}{[0,T]}
\newcommand{\tpi}{\tilde\pi}
\newcommand{\zc}{\stackrel{\mathcal{Z}\mathcal{C}_2}{=}}
\newcommand{\cac}{{\mathcal C}}
\newcommand{\cn}{{\mathcal N}}
\newcommand{\cs}{{\mathcal S}}
\newcommand{\cZ}{{\mathcal Z}}
\newcommand{\al}{\alpha}
\newcommand{\ga}{\gamma}
\newcommand{\ep}{\varepsilon}
\newcommand{\ka}{\kappa}
\newcommand{\si}{\sigma}
\newcommand{\vp}{\varphi}
\newcommand{\R}{{\mathbb R}}
\newcommand{\T}{{\mathbb T}}
\newtheorem{theorem}{Theorem}[section]
\newtheorem{lemma}[theorem]{Lemma}
\newtheorem{proposition}[theorem]{Proposition}
\begin{document}
\begin{frontmatter}

\title{A construction of the rough path above fractional Brownian
motion using Volterra's representation}
\runtitle{Rough path above fBm}

\begin{aug}
\author[A]{\fnms{David} \snm{Nualart}\thanksref{t1}\ead
[label=e1]{nualart@math.ku.edu}} and
\author[B]{\fnms{Samy} \snm{Tindel}\corref{}\thanksref{t2}\ead
[label=e2]{tindel@iecn.u-nancy.fr}}
\runauthor{D. Nualart and S. Tindel}
\affiliation{University of Kansas and Institut {\'E}lie Cartan Nancy}
\address[A]{Department of Mathematics\\
University of Kansas\\
405 Snow Hall\\
Lawrence, Kansas\\
USA\\
\printead{e1}} 
\address[B]{Institut {\'E}lie Cartan Nancy\\
B.P. 239, 54506 Vandoeuvre-l{\`e}s-Nancy Cedex\\
France\\
\printead{e2}}
\end{aug}

\thankstext{t1}{Supported in part by the NSF Grant NSF0063812.}

\thankstext{t2}{Supported in part by the ANR Grant ECRU.}

\received{\smonth{9} \syear{2009}}
\revised{\smonth{6} \syear{2010}}

%
\begin{abstract}
This note is devoted to construct a rough path above a multidimensional
fractional Brownian motion $B$ with any Hurst parameter $H\in(0,1)$, by
means of its representation as a Volterra Gaussian process. This
approach yields some algebraic and computational simplifications with
respect to [\textit{Stochastic Process. Appl.} \textbf{120} (2010)
1444--1472],
where the construction of a rough path over
$B$ was first introduced.
\end{abstract}

%
\begin{keyword}[class=AMS]
\kwd{60H05}
\kwd{60H07}
\kwd{60G15}.
\end{keyword}
\begin{keyword}
\kwd{Rough paths theory}
\kwd{fractional Brownian motion}
\kwd{multiple stochastic integrals}.
\end{keyword}

\end{frontmatter}

\section{Introduction}\label{intro}
Rough paths analysis is a theory introduced by Terry Lyons in the
pioneering paper
\cite{Ly1} which aims to solve differential equations driven by
functions with finite $p$-variation with $p>1$, or by H\"older
continuous functions of order $\gamma\in(0,1)$.
One possible shortcut to the rough path theory is the following summary
(see \cite{FV-bk,Gu,LQ-bk} for a complete construction). Given a $\ga
$-H\"{o}lder $d$-dimensional process $X=({X}(1),\ldots,{X}(d))$ defined
on an arbitrary interval $\ott$, assume that one can define some
iterated integrals of the form
%
%
\begin{equation}\label{eq:def-Xn-intro}
\bx^{\bn}_{st}(i_1,\ldots,i_n)=
\int_{s\le u_1<\cdots<u_n\le t}
dX_{u_1}(i_1) \,dX_{u_2}(i_2) \cdots dX_{u_n}(i_n),
\end{equation}
for $0\le s<t\le T$, $n\le\lfloor1/\ga\rfloor$ and $i_1,\ldots
,i_n\in
\{1,\ldots,d\}$. As long as $X$ is a
nonsmooth function, the integral above cannot be defined rigorously in
the Riemann sense (and not even in the Young sense if $\ga\le1/2$).
However, it is reasonable to assume that some elements $\bx^{\bn}$ can
be constructed, sharing the following three properties with usual
iterated integrals (here and in the sequel, we denote by
$\cs_{k,T}
=\{ (u_1,\ldots, u_k)\dvtx0\le u_1 < \cdots< u_k \le T\}$ the $k$th order
simplex on $[0,T]$):
\begin{enumerate}[(1)]
\item[(1)] \textit{Regularity}:
each component of $\mathbf{X}^{\mathbf{n}}$ is $n\ga$-H\"{o}lder continuous
[in the sense of the H\"older norm introduced in (\ref
{eq:def-norm-C2})] for all $n\le\lfloor1/\ga\rfloor$ and $\mathbf
{X}^{\mathbf{1}}_{st}=X_t-X_s$.
\item[(2)]
\textit{Multiplicativity}:
letting $(\der\mathbf{X}^{\mathbf{n}})_{sut}:=\mathbf{X}^{\mathbf
{n}}_{st}-\mathbf{X}^{\mathbf{n}}_{su}-\mathbf{X}^{\mathbf{n}}_{ut}$
for $(s,u,t)\in\cs_{3,T}$, one requires
%
%
\begin{equation}\label{eq:multiplicativity}
(\der\mathbf{X}^{\mathbf{n}})_{sut}(i_1,\ldots,i_n)=\sum_{n_1=1}^{n-1}
\mathbf{X}_{su}^{\mathbf{n_1}}(i_1,\ldots,i_{n_1}) \mathbf
{X}_{ut}^{\mathbf{n-n_1}}(i_{n_1+1},\ldots,i_n).
\end{equation}
\item[(3)] \textit{Geometricity}:
for any $n,m$ such that $n+m\le\lfloor1/\ga\rfloor$and $(s,t)\in
\cs
_{2,T}$, we have
%
%
\begin{equation}\label{eq:geom-rough-path}
\bx^{\bn}_{st}(i_1,\ldots,i_n) \bx^{\bmm}_{st}(j_1,\ldots,j_m)
=\sum_{\bar k\in\mathrm{Sh}(\bar\imath,\bar\jmath)}
\bx^{\bn+\bmm}_{st}(k_1,\ldots,k_{n+m})
,
\end{equation}
where, for two tuples $\bar\imath,\bar\jmath$, $\Sigma_{(\bar
\imath,\bar
\jmath)}$ stands for the set of permutations of the indices contained
in $(\bar\imath,\bar\jmath)$, and $\mathrm{Sh}(\bar\imath,\bar
\jmath)$ is
a subset of $\Sigma_{(\bar\imath,\bar\jmath)}$ defined by
\[
\mathrm{Sh}(\bar\imath,\bar\jmath)=
\bigl\{\si\in\Sigma_{(\bar\imath,\bar\jmath)};
\si\mbox{ does not change the orderings of } \bar\imath\mbox{ and }
\bar\jmath\bigr\}.
\]
\end{enumerate}
We shall call the family $\{\mathbf{X}^{\mathbf{n}}; n\le\lfloor
1/\ga\rfloor\}$ a rough path over $X$ (it is also referred to as the
truncated signature of $X$ in
\cite{FV-bk}).

Once a rough path over $X$ is defined, the theory described in \cite
{FV-bk,Gu,LQ-bk} can be seen as a procedure which allows
us to construct, starting from the family $\{\mathbf{X}^{\mathbf{n}};
n\le\lfloor1/\ga\rfloor\}$, the complete stack $\{\mathbf
{X}^{\mathbf{n}}; n\ge1\}$. Furthermore, with the rough path over
$X$ in hand, one can also define rigorously and solve differential
equations driven by~$X$.

The above general framework leads thus naturally to the question of a
rough path construction for standard stochastic processes. The first
example one may have in mind concerning this issue is arguably the case
of a $d$-dimensional fractional Brownian motion (fBm) $B=(B(1),\ldots
,B(d))$ with Hurst parameter $H\in(0,1)$. This is a Gaussian process
with zero mean whose components are independent and with covariance
function given by
\[
\be(B_t(i)B_s(i) )= \tfrac12 (t^{2H} + s^{2H} - |t-s|^{2H}
),\qquad
s,t\in\R_+.
\]
For $H=\frac12$ this is just the usual Brownian motion. For any $H\in
(0,1)$, the variance of the increments of $B$ is then given by
\[
\be\bigl[\bigl(B_t(i)-B_s(i) \bigr)^2\bigr]= (t-s)^{2H},\qquad (s,t)\in\cs
_{2,T}, i=1,\ldots, d,
\]
and this implies that almost surely the trajectories of the fBm are
$\gamma$-H\"{o}lder continuous for any $\gamma<H$, which justifies the
fact that the fBm is the canonical
example for a rough path construction.

The first successful rough path analysis for $B$ has been implemented
in \cite{CQ} by means of a linearization of the fBm path, and it leads
to the construction of a family $\{\bb^{\mathbf{1}},\bb^{\mathbf{2}},\bb^{\mathbf{3}}\}$
satisfying (1), (\ref{eq:multiplicativity}) and (\ref
{eq:geom-rough-path}), for any $H>1/4$ (see also \cite{FV} for a
generalized framework). Some other constructions can be found in \cite
{FV,NNT,TT} by means of stochastic analysis methods, and in \cite{Un}
thanks to complex analysis tools. In all those cases, the barrier
$H>1/4$ remains, and it has long been believed that this was a natural
boundary, in terms of regularity, for an accurate rough path construction.

Let us describe now several recent attempts to go beyond the threshold
$H=1/4$. One should first quote the interesting paper \cite{LV}, where
a general construction of a rough path is performed by means of a
discretization procedure. However, the rough path constructed in this
reference is only defined on dyadic points, and then extended to any
real positive number by an abstract analytic result.
The complex analysis methods used in \cite{TU} also allowed
the authors to build a rough path above a process $\Gamma$ called
analytic fBm, which is a complex-valued process whose real and
imaginary parts are fBm, for any value of $H\in(0,1)$. It should be
mentioned, however, that $\Re\Gamma$ and $\Im\Gamma$ are not
independent, and thus the arguments in \cite{TU}
cannot be extrapolated to the real-valued fBm. 
Then a series of brilliant ideas developed in \cite{Un09a,Un09b} lead
to the rough path construction in the real-valued case.
We will try now to summarize briefly, in very vague terms, this series
of ideas (see Section \ref{sec:intg-order-2} for a more detailed
didactic explanation):

\mbox{}\hphantom{i}(i) Consider a smooth approximation $B^\ep$ of the fBm $B$ and
the corresponding approximation $\bb^{\bn,\ep}$ of $\bb^{\bn}$. Clearly
$\bb^{\bn,\ep}$ satisfies relation (\ref{eq:multiplicativity}), but may
diverge as $\ep\to0$ whenever $H<1/4$. Then, one can decompose $\bb
^{\bn,\ep}_{st}$ as $\bb^{\bn,\ep}_{st}=\ba^{\bn,\ep}_{st}+\bc
^{\bn,\ep
}_{st}$, where $\bc^{\bn,\ep}$ is the increment of a function $f$,
namely $\bc^{\bn,\ep}_{st}=f_t-f_s$, and $\ba^{\bn,\ep}$ is
obtained as
a boundary term in the integrals defining $\bb^{\bn,\ep}$. As explained
in Section~\ref{sec:intg-order-2}, a typical example of such a
decomposition is given (for $n=2$) by $\ba_{st}^{\mathbf{2},\ep
}(i_1,i_2)=-B_{s}^{\ep}(i_1) \der B_{st}^{\ep}(i_2)$ and $\bc
_{st}^{\mathbf{2}
,\ep}(i_1,i_2)= \int_s^t B_{u}^{\ep}(i_1) \,dB_{u}^{\ep}(i_2)$, and in
this case $f_t(i_1,i_2)=\int_0^t B_{u}^{\ep}(i_1) \,dB_{u}^{\ep}(i_2)$.
Then it can be easily checked, thanks to the relation $\bc^{\bn,\ep
}_{st}=f_t-f_s$, that $\bc^{\bn,\ep}_{st}-\bc^{\bn,\ep}_{su}-\bc
^{\bn
,\ep}_{ut}=0$ for any
$(s,u,t)\in\cs_{3,T}$. This means that replacing $\bb_{st}^{\bn,\ep}$
by $\ba_{st}^{\bn,\ep}=\bb^{\bn,\ep}-\bc_{st}^{\bn,\ep}$ does not
affect the multiplicative property (\ref{eq:multiplicativity}) of $\bb
^{\bn,\ep}$.
On the other hand, the boundary term $\ba^{\bn,\ep}_{st}$ is usually
easily seen to be convergent as $\ep\to0$ to some limit $\ba^{\bn
}_{st}$. Then, the limit $\ba^{\bn}_{st}$ should fulfill the desired
multiplicative property, but it does not exhibit the desired H\"{o}lder
regularity $(kH)^-$. It should also be noticed that $\ba^{\bn,\ep
}_{st}$ is not the only function of two variables sharing the
multiplicative property with $\bb^{\bn,\ep}$. We refer to
Section \ref{sec:intg-order-2} for further details, but let us mention
that another
possibility for $n=2$ is the boundary term $\der X_{st}^{\ep}(i_1)
X_{t}^{\ep}(i_2)$, which is easily seen to satisfy relation (\ref
{eq:multiplicativity}).

(ii)
The essential point in Unterberger's method is then the following:
carry out the above program for some given regularizations of the fBm
path. Then, it turns out that there is a choice of boundary terms such
that their sum satisfies the desired H\"older and multiplicative
properties. This idea has been successfully implemented in \cite
{Un09a,Un09b}, providing an explicit construction of a rough path
associated to $B$. However, this construction is rather long and
intricate, because the changes in the order of integration in the
multiple integrals are coded by admissible cuts in some trees
associated to multiple integrals. This language, well known by
algebraists \mbox{\cite{CL,Fo}}, numerical analysts \cite{Bu,HLR} and
theoretical physicists \cite{CK}, may, however, sound difficult to the
noninitiated reader.

The purpose of the current paper is to take up the program initiated
in \cite{Un09a}, and construct a rough path over $B$ in a rather simple
way, using the stochastic integral representation of the fBm as a
Volterra Gaussian process.
We know that (see \cite{Nu06}, Proposition 5.1.3, for a justification)
for $H<1/2$, each component $B(i)$ of $B$ can be written as
%
%
\begin{equation}\label{eq:fbm-volterra}
B_t(i)=\int_{\R} K(t,u) \,dW_u(i),\qquad t\ge0,
\end{equation}
where $W=(W(1),\ldots,W(d))$ is a $d$-dimensional Wiener process, and
where the Volterra-type kernel $K$ is defined on $\R_+\times\R_+$ by
%
%
\begin{eqnarray}\label{eq:def-K}\qquad
K(t,u)&=& c_H \biggl[ \biggl(\frac{u}{t}\biggr)^{1/2-H} (t-u)^{H-1/2}
\nonumber\\[-8pt]\\[-8pt]
&&\hspace*{17.6pt}{}+\biggl(\frac12-H\biggr) u^{1/2-H} \int_u^t v^{H-3/2}
(v-u)^{H-1/2} \,dv \biggr] \mathbf{1}_{\{0<u<t\}},
\nonumber
\end{eqnarray}
with a strictly positive constant $c_H$, whose exact value is
irrelevant for our purposes.
Then we show that the simple trick described at point (ii) above can be
applied in a straightforward way using the Volterra representation,
leading to a simple general formula for the multiple integrals $\bb
^{\bn}$.
To be more specific, let us describe the main result of this paper.
\begin{theorem}\label{thm:main-thm}
Let $B$ be a $d$-dimensional fractional Brownian motion with Hurst
parameter $H\in(0,1/2)$, admitting representation (\ref
{eq:fbm-volterra}). For $2\le n\le\lfloor1/H\rfloor$, any tuple
$(i_1,\ldots,i_n)$ of elements of $\{1,\ldots,d\}$, $1\le j\le n$ and
$(s,t)\in\cs_{2,T}$, set
%
%
\begin{eqnarray}\label{eq:def-hat-Bn}
&&\hbb_{st}^{\bn,j}(i_1,\ldots,i_n) \nonumber\\
&&\qquad=
(-1)^{j-1} \int_{A_{j}^{n}} \prod_{l=1}^{j-1} K(s,u_l) [
K(t,u_j)-K(s,u_j)]\\
&&\qquad\hspace*{82.7pt}{}\times
\prod_{l=j+1}^{n} K(t,u_l)\,dW_{u_1}(i_1) \cdots dW_{u_n}(i_n),
\nonumber
\end{eqnarray}
where the kernel $K$ is given by (\ref{eq:def-K}) and $A_{j}^{n}$ is
the subset of $[0,t]^n$ defined by
\begin{eqnarray*}
A_{j}^{n}&=&\{(u_1,\ldots,u_n)\in[0,t]^{n};\\
&&\hspace*{5.7pt} u_j=\min(u_1,\ldots
,u_n),
u_1>\cdots> u_{j-1}
\mbox{ and } u_{j+1}<\cdots<u_n\}.
\end{eqnarray*}
Notice that the
multiple stochastic integral in (\ref{eq:def-hat-Bn}) is understood in
the Strato\-novich sense, and is well defined as a $L^2(\Omega)$ random
variable as long as $n\le\lfloor1/H\rfloor$. Set also $\bb^{\mathbf{1}
}_{st}(i)=B_t(i)-B_s(i)$, and for $2\le n\le\lfloor1/H\rfloor$,
%
%
\begin{equation}\label{eq:def-Bn-intro}
\bb_{st}^{\bn}(i_1,\ldots,i_n)=\sum_{j=1}^{n } \hbb_{st}^{\bn
,j}(i_1,\ldots,i_n).
\end{equation}
Then the family $\{\bb^{\bn}; 1\le n\le\lfloor1/H\rfloor\}$ defines
a rough path over $B$, in the sense that $\bb^\bn$ is almost surely
$n\ga$-H\"{o}lder continuous for any $\ga<H$, and that it satisfies
relations (\ref{eq:multiplicativity}) and (\ref{eq:geom-rough-path}).
\end{theorem}

As announced above, formula (\ref{eq:def-hat-Bn}) defines in a compact
and simple way the (substitute to) iterated integrals of $B$ with
respect to itself. Furthermore, this formula also yields a reasonably
short way to estimate the moments of $\bb_{st}^{\bn}$, and thus its
H\"
older regularity. It should be mentioned, however, that our
construction is not as general as the one proposed in \cite{Un09b},
though it can be extended to a broad class of Gaussian Volterra
processes. More precisely, the reader can check that the only
properties of the kernel $K$ used in this paper are
\[
|K(t,u)| \le C[ (t-u)^{H-1/2} + u^{H-1/2}]\quad \mbox
{and}\quad
|\partial_t K(t,u)| \le C (t-u)^{H-3/2}
\]
for any $H\in(0,1/2)$ and for some constant $C>0$. It is also worth
mentioning at this point that our representation (\ref
{eq:def-Bn-intro}) of $\bb^{\bn}$ is adapted to the past of the path $B$.

It is hard to compare our main result with the one given in \cite{LV},
due to the abstract nature of the latter. We can, however, say a few
words about the relationship between the processes $\bb^\bn$ we have
produced and the pathwise ones constructed in the aforementioned
references \cite{CQ,FV,NNT,Un}, as well as with the recent objects
introduced in \cite{Un09b}.

\mbox{}\hphantom{i}(i)
When $1/4<H\le1/2$, let us denote by $\bb^{\mathbf{2},\mathrm{p}}$ (where
$\mathrm{p}$
stands for pathwise) the double iterated integral constructed in
\cite{CQ,FV,NNT,Un}. Notice that these integrals all coincide as limit
of Riemann sums (a fact which is mentioned in \cite{NTU}). One has then
to distinguish two situations:

(1) For $H=1/2$, a slight extension of our construction also
allows to define $\bb^\mathbf{2}$ for Brownian motion, and it is readily
checked in this case that $\bb^\mathbf{2}$ coincides with the usual
Stratonovich double iterated integral.

(2) When $1/4<H< 1/2$, we know that $\der\bb^{\bn}=\der
\bb^{\bn
,\mathrm{p}}$, and it can be seen from this relation that $\bb^{\bn}$ and
$\bb^{\bn,\mathrm{p}}$ only differ by the increment of a function~$f$.
This nontrivial correction term is identified at Section \ref
{sec:relation-other}. Notice that the correction term for $\bb^{\mathbf{3}}$
could be identified as well, but we did not include these computations
for the sake of conciseness.

(ii)
For $H\le1/4$, we shall see that our iterated integrals can be
considered under the framework of the rough path constructions by
Fourier normal ordering contained in \cite{Un09b}. As mentioned above,
our main result gives a more direct an elementary (though less general)
representation of the iterated integrals. This representation only uses
direct (as opposed to Fourier) coordinates and is adapted with respect
to the underlying fBm $B$.
All these considerations will be developed at Section \ref{sec:relation-other}.

Here is how our article is divided: some preliminary results, including
algebraic integration vocabulary, some estimates on the kernel $K$ and
It\^{o}--Stratonovich corrections, are given in Section \ref
{sec:preliminaries}. Then the basic ideas of the construction are
implemented in Section \ref{sec:intg-order-2} on second order iterated
integrals. This section is thus intended as a didactic introduction to
the construction, and could be enough for a first quick glimpse at the
topic. Then we give all the details concerning the general iterated
integral definition and prove Theorem \ref{thm:main-thm} in
Section \ref{sec:general-case}. Finally, Section~\ref{sec:relation-other}
establishes some links between our integrals and other well established
iterated integrals for fBm.

\section{Preliminaries}
\label{sec:preliminaries}
This section is first devoted to recall some notational conventions for
a special subset (called set of increments) of functions of several
variables. These conventions are taken from the algebraic integration
theory as explained in \cite{Gu,GT}. We will then recall some basic
estimates on iterated Stratonovich integrals with respect to the Wiener
process, which turn out to be useful for the remainder of the article.

\subsection{Some algebraic integration vocabulary}\label
{sec:algebraic-vocabulary}
The current section is not intended as an introduction to algebraic
integration, which would be useless for our purposes. However, we shall
use in the sequel some
notation taken from this method of rough paths analysis, and we shall
proceed to recall them now.

The algebraic integration setting is based on the notion of increment,
together with an
elementary operator $\der$ acting on them. The notion of increment can
be introduced in the following way: for an arbitrary real number
$T> 0$, a vector space~$V$, and an integer $k\ge1$, we denote by $\cs
_{k,T}$ the $k$th order simplex on $[0,T]$, and by
$\cac_k(V)$ the set of continuous functions $g \dvtx\cs_{k,T} \to V$ such
that $g_{t_1 \cdots t_{k}} = 0$
whenever $t_i = t_{i+1}$ for some $i\le k-1$.
Such a function will be called a
\textit{$(k-1)$-increment}, and we will
set $\cac_*(V)=\bigcup_{k\ge1}\cac_k(V)$. The operator $\der$
alluded to above can be seen as an operator acting on
$k$-increments,
and is defined as follows on $\cac_k(V)$:
%
%
\begin{equation}
\label{eq:coboundary}
\delta\dvtx\cac_k(V) \to\cac_{k+1}(V),\qquad
(\delta g)_{t_1 \cdots t_{k+1}} = \sum_{i=1}^{k+1} (-1)^i g_{t_1
\cdots\hat t_i \cdots t_{k+1}} ,
\end{equation}
where $\hat t_i$ means that this particular argument is omitted.
Then a fundamental property of $\der$, which is easily verified,
is that
$\delta\delta= 0$, where $\delta\delta$ is considered as an operator
from $\cac_k(V)$ to $\cac_{k+2}(V)$.
We will denote $\cZ\cac_k(V) = \cac_k(V) \cap\operatorname
{Ker}\delta$.

Some simple examples of actions of $\der$,
which will be the ones we will really use throughout the paper,
are obtained by letting
$g\in\cac_1$ and $h\in\cac_2$. Then, for any $(s,u,t)\in\cs
_{3,T}$, we have
%
%
\begin{equation}
\label{eq:simple_application}
(\der g)_{st} = g_t - g_s\quad
\mbox{and}\quad
(\der h)_{sut} = h_{st}-h_{su}-h_{ut},
\end{equation}
and in this particular case, it can be trivially checked that for any
$g\in\cac_1$, one has $\der\der g=0$. Conversely, any $h\in
{\mathcal Z}\cac_2$
can be written as $h=\der g$ for an element $g\in\cac_1$. In the sequel
of the paper, we shall write for two elements $h^1,h^2\in\cac_2$
%
%
\begin{equation}\label{eq:convention-ZC2}
h^1\stackrel{{\mathcal Z}\cac_2}{=}h^2
\quad\mbox{iff}\quad
h^1=h^2+z
\qquad\mbox{with }
z\in{\mathcal Z}\cac_2.
\end{equation}
Otherwise stated, $h^1\stackrel{{\mathcal Z}\cac_2}{=}h^2$ iff $\der
h^1=\der h^2$.

Notice that our future discussions will rely on some
analytical assumptions made on elements of
$\cac_k(V)$. Suppose $V$ is equipped with a norm \mbox{$|\cdot|$}.
We measure the size of the increments by H\"older norms
defined in the following way: for $g \in\cac_2(V)$ let
%
%
\begin{equation}\label{eq:def-norm-C2}
\Vert g\Vert_{\mu} \equiv
\sup_{(s,t)\in\cs_{2,T}}\frac{|g_{st}|}{|t-s|^\mu}\quad
\mbox{and}\quad
\cac_2^\mu(V)=\{g \in\cac_2(V); \Vert g\Vert_{\mu}<\infty\}.
\end{equation}
With this notation, we also set $\cac_1^\mu(V)=\{ f \in\cac_1(V);
\Vert\der f\Vert_\mu< \infty\}$ (notice that the sup norm of $f$ is not
taken into account in this definition). In the same way, for $h \in
\cac
_3(V)$, set
%
%
\begin{eqnarray}
\label{eq:norm-C22}
\Vert h\Vert_{\gamma,\rho} &=& \sup_{(s,u,t)\in\cs_{3,T}}
\frac{|h_{sut}|}{|u-s|^\gamma|t-u|^\rho},\\
\label{eq:norm-C3}
\Vert h\Vert_\mu&\equiv&
\inf\biggl\{\sum_i \Vert h_i\Vert_{\rho_i,\mu-\rho_i} ; h =\sum_i
h_i,
0 < \rho_i < \mu\biggr\},
\end{eqnarray}
where the last infimum is taken over all sequences $\{h_i \in\cac_3(V)
\}$ such that $h
= \sum_i h_i$ and for all choices of the numbers $\rho_i \in(0,\mu)$.
Then $\Vert\cdot\Vert_\mu$ is easily seen to be a norm on $\cac
_3(V)$, and
we set
\[
\cac_3^\mu(V):=\{h\in\cac_3(V); \Vert h\Vert_\mu<\infty\}.
\]
%
In order to avoid ambiguities, we shall denote by $\cn[f; \cac
_j^\mu
(V)]$ the $\mu$-H\"{o}lder norm (or semi-norm) on the space $\cac
_j(V)$, for $j=1,2,3$.

The lemma below, borrowed from \cite{Gu}, Lemma 4, will be an essential
tool for the analysis of H\"{o}lder-type regularity of our increments:
\begin{lemma}\label{GRR-2}Let $\kappa>0$ and $p \geq1$. Let $R \in
\cac_2(\R^l)$, with $\delta R \in\cac_3^{\ka}(\R^l)$ in the sense
given by (\ref{eq:norm-C3}).
If
\[
\int_{\cs_{2,T}} \frac{|R_{uv}|^{2p}}{|u-v|^{2 \kappa p +4}} \,du \,dv
< \infty,
\]
then $R \in\cac_2^{\kappa}(\R^l)$. In particular, there exists a
constant $C_{\kappa, p,l}>0$, such that
\begin{eqnarray*}
\cn[ R; \cac_2^{\kappa}(\R^l) ] & \leq&
C_{\kappa
,p,l} \biggl(\int_{\cs_{2,T}} \frac{| R_{uv} |^{2p}}{|u-v|^{2 \kappa
p+4}} \,du \,dv \biggr)^
{1/({2p})}\\
&&{} + C_{\kappa,p,l} \cn[ \delta R; \cac_3^{\ka
}( \R
^l) ].
\end{eqnarray*}
\end{lemma}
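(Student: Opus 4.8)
The plan is to adapt the classical Garsia--Rodemich--Rumsey (GRR) inequality to the present setting of $2$-increments. Were $R$ a genuine coboundary, i.e. $R_{uv}=g_v-g_u$ for some function $g$ (equivalently $\delta R=0$), the statement would be a routine consequence of the scalar GRR inequality applied to $g$. The whole difficulty, and the reason for the second term in the announced bound, is that $R$ is only \emph{almost} additive: the coboundary identity $(\delta R)_{sut}=R_{st}-R_{su}-R_{ut}$ produces an error term each time an interval is split. I would therefore organize the proof so that the integral hypothesis controls the ``local'' size of $R$, while the quantity $\cn[\delta R;\cac_3^\kappa(\R^l)]$ absorbs all the splitting errors.

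Write $U:=\int_{\cs_{2,T}}|R_{uv}|^{2p}|u-v|^{-2\kappa p-4}\,du\,dv$ for the left-hand integral, and fix $(s,t)\in\cs_{2,T}$ with $h:=t-s$. First I would run the GRR averaging mechanism. On a block of the simplex consisting of pairs $(u,v)$ with $u,v$ lying in a common subinterval of length $\rho\sim 2^{-n}h$, the integrand integrates to at most $U$, while the block has two-dimensional Lebesgue measure of order $\rho^2$; hence there exists a pair $(u,v)$ in the block at which $|R_{uv}|^{2p}\le (U/\rho^{2})\,|u-v|^{2\kappa p+4}\le U\,\rho^{2\kappa p+2}$, that is $|R_{uv}|\le U^{1/2p}\rho^{\kappa+1/p}\le C\,U^{1/2p}\rho^{\kappa}$. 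This is exactly where the exponent $2\kappa p+4$ is used, and where it differs from the exponent $2\kappa p+2$ of the one-parameter GRR: the selection is performed over genuinely two-dimensional blocks, so the area $\rho^2$ contributes the extra factor $\rho^{-2}$, i.e. the additional $+2$ in the denominator.

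Next I would connect the endpoints by a chain $t=\sigma_0>\sigma_1>\sigma_2>\cdots\to s$ accumulating at $s$, with $\sigma_n-s$ of order $2^{-n}h$, the points being selected adaptively by the averaging step so that each elementary increment $R_{\sigma_{n}\sigma_{n-1}}$ is controlled by $C\,U^{1/2p}(2^{-n}h)^{\kappa}$ (this selection being the technical heart of the argument). Splitting off one interval at a time and using continuity of $R$ to let $\sigma_N\to s$, the coboundary identity telescopes $R_{st}$ into
$$R_{st}=\sum_{n\ge1}R_{\sigma_{n}\sigma_{n-1}}+\sum_{n\ge2}(\delta R)_{\sigma_{n}\sigma_{n-1}\sigma_{0}},$$
that is, exactly one elementary increment and one correction triple per scale. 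The first sum is bounded by $C\,U^{1/2p}h^{\kappa}\sum_{n}2^{-n\kappa}$. For the second, I fix a near-optimal decomposition $\delta R=\sum_i h_i$ with $h_i\in\cac_3^{\rho_i,\kappa-\rho_i}(\R^l)$ and $\rho_i\in(0,\kappa)$ realizing the infimum in (\ref{eq:norm-C3}) up to a constant; since the triple $(\sigma_n,\sigma_{n-1},\sigma_0)$ has gaps of order $2^{-n}h$ and $h$, the definition (\ref{eq:norm-C22}) bounds each correction by $C\,\cn[\delta R;\cac_3^\kappa(\R^l)]\,(2^{-n}h)^{\kappa-\rho_i}h^{\rho_i}$, and the sum over $n$ converges because $\kappa-\rho_i>0$. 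Adding the two geometric series gives $|R_{st}|\le C\,(U^{1/2p}+\cn[\delta R;\cac_3^\kappa(\R^l)])\,h^{\kappa}$, which is the desired estimate after dividing by $h^\kappa$ and taking the supremum over $(s,t)\in\cs_{2,T}$.

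The hard part will be the bookkeeping of the adaptive chaining. The averaging argument only furnishes the \emph{existence} of a good pair inside each block, not a pair with one prescribed endpoint; reconciling these good pairs with the already-chosen chain points $\sigma_{n-1}$ forces one to link them across scales, and controlling that link is where the bulk of the classical GRR work lies (it may itself generate further comparison terms, to be bounded at the finer scale by the same averaging). One must also arrange the partition points of consecutive scales compatibly so that the telescoping actually closes, and ensure that every elementary increment entering the sum is genuinely ``good.'' A secondary nuisance is that $\cn[\delta R;\cac_3^\kappa(\R^l)]$ is defined in (\ref{eq:norm-C3}) as an infimum over decompositions, which I circumvent by fixing once and for all a near-optimal one. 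Once these points are settled the remainder is a faithful transcription of the scalar GRR argument, continuity of $R$ being used only to pass from dyadic pairs to arbitrary $(s,t)$.
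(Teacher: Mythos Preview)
The paper does not prove this lemma at all: it is stated as ``borrowed from \cite[Lemma 4]{Gu}'' and used as a black box. Your sketch is precisely the Garsia--Rodemich--Rumsey argument adapted to $2$-increments that Gubinelli carries out in that reference, so in spirit you are reproducing the cited proof rather than diverging from it.

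One technical point is worth flagging. In your bound on the correction sum you fix a near-optimal decomposition $\delta R=\sum_i h_i$ and then sum $\sum_n 2^{-n\rho_i}$ (or $2^{-n(\kappa-\rho_i)}$, depending on which gap carries which exponent). This geometric series equals $(1-2^{-\rho_i})^{-1}$, which blows up as $\rho_i\to 0$; hence the constant you obtain in front of $\cn[\delta R;\cac_3^\kappa]$ depends on the particular decomposition, not only on $\kappa,p,l$. In the applications made in this paper (Propositions \ref{prop:regularity-B2} and the regularity step for $\bb^{\bn}$) the decomposition of $\delta\bb^{\bn}$ has finitely many terms with exponents bounded away from $0$ and $\kappa$, so this causes no trouble there; but as a general statement you should either restrict to such decompositions or argue a little more carefully to get a constant depending only on $\kappa$. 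Apart from this, your outline is correct and matches the reference.
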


\subsection{Analytic bounds on the fractional Brownian kernel}
We gather in this section some technical bounds on the kernel $K$
involved in the Volterra representation of $B$, for which we use the
following convention (valid until the end of the article): for two
positive quantities $a$ and $b$, we write $a\lesssim b$ whenever there
exists a universal constant $C$ such that $a \le C b$.

First, a classical bound on $K$ is the following:
\begin{lemma}
Let $K$ be the fBm kernel defined by (\ref{eq:def-K}). Then for any $0<
u < t$, one has
%
%
\begin{equation}\label{eq:bnd-kernel}\qquad
|K(t,u)|\lesssim(t-u)^{H-1/2} + u^{H-1/2}
\quad\mbox{and}\quad
|\partial_t K(t,u)|\lesssim(t-u)^{H-3/2}.
\end{equation}
\end{lemma}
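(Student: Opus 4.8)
The plan is to prove the two bounds in \eqref{eq:bnd-kernel} separately, the first by a direct triangle-inequality estimate on the two terms in the explicit formula \eqref{eq:def-K}, and the second by differentiating the integral representation in $t$.

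For the first bound, I would start from the defining formula
$$
K(t,u)= c_H \Big[ \lp  \frac{u}{t}\rp^{\frac 12-H} (t-u)^{H-\frac 12}
+\left(\tfrac 12-H\right) \, u^{\frac 12-H} \int_u^t v^{H-\frac 32} (v-u)^{H-\frac 12} \, dv \Big] \1_{\{0<u<t\}}
$$
(the inner integral is over $[u,t]$; in the notation of the paper the lower limit plays the role of $u$). For the first term, since $0<u<t$ we have $(u/t)^{1/2-H}\le 1$ when $H<1/2$, so it is bounded by $(t-u)^{H-1/2}$. For the second term I would bound $v^{H-3/2}$ by its value at the left endpoint: on $[u,t]$ one has $v\ge u$, hence $v^{H-3/2}\le u^{H-3/2}$ since $H-3/2<0$. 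This leaves $u^{1/2-H}\cdot u^{H-3/2}\int_u^t (v-u)^{H-1/2}dv = u^{-1}\cdot \tfrac{1}{H+1/2}(t-u)^{H+1/2}$. Then writing $(t-u)^{H+1/2}=(t-u)\,(t-u)^{H-1/2}$ and $(t-u)/u\le t/u$ does not immediately give what we want; instead I would split the range of the integral at $2u$ (if $t>2u$) to separate the region where $v\sim u$ from the region where $v-u\sim v$, and in the region $v>2u$ bound $v^{H-3/2}(v-u)^{H-1/2}\lesssim v^{2H-2}$ and integrate, obtaining a contribution $\lesssim u^{1/2-H}(u^{2H-1})=u^{H-1/2}$; the region $u<v<2u$ gives $u^{1/2-H}u^{H-3/2}(t-u)^{H+1/2}\wedge u^{H+1/2}\lesssim u^{H-1/2}$ as well. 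Combining yields $K(t,u)\lesssim (t-u)^{H-1/2}+u^{H-1/2}$.

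For the second bound, I would differentiate $K(t,u)$ with respect to $t$. The cleanest route is to use the known alternative representation of $K$ as a fractional integral, which gives directly $\partial_t K(t,u)=c_H'\,(u/t)^{1/2-H}(t-u)^{H-3/2}$ — indeed differentiating the closed form above, the derivative of the first term is $(1/2-H)u^{1/2-H}t^{H-3/2}(t-u)^{H-1/2}+(H-1/2)(u/t)^{1/2-H}(t-u)^{H-3/2}$, and the derivative of the second term (fundamental theorem of calculus applied to the integral, whose upper limit is $t$) is $(1/2-H)u^{1/2-H}t^{H-3/2}(t-u)^{H-1/2}$; one then checks that these combine, after using the explicit value of $c_H$, to the single clean term $\propto (u/t)^{1/2-H}(t-u)^{H-3/2}$ (the two $t^{H-3/2}(t-u)^{H-1/2}$ pieces cancel because $1/2-H$ and $-(1/2-H)=H-1/2$ have opposite signs). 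This gives the claimed $\partial_t K(t,u)\lesssim (u/t)^{1/2-H}(t-u)^{H-3/2}$ with equality up to the constant.

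The main obstacle is the bookkeeping in the first bound: controlling the inner integral $\int_u^t v^{H-3/2}(v-u)^{H-1/2}dv$ so that it produces exactly a term of order $u^{H-1/2}$ (plus possibly a lower-order piece absorbed by $(t-u)^{H-1/2}$), rather than a mixed term, requires the splitting at $v=2u$ and a careful comparison of exponents; this is elementary but is the only place where a naive triangle inequality is insufficient. The derivative computation, by contrast, is a short exercise once one commits to either differentiating the closed form and tracking the cancellation, or invoking the fractional-integral representation.
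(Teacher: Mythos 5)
The paper gives no proof of this lemma: it is stated as ``a classical bound'' on the Volterra kernel and implicitly referred to the literature (e.g.\ \cite{Nu06,AMN}), so there is nothing to compare your argument against line by line. Your proposal is a correct, self-contained proof of the standard kind. The first bound is handled properly: the naive estimate $v^{H-\frac32}\le u^{H-\frac32}$ alone is indeed insufficient when $t-u\gg u$, and the splitting of the inner integral at $v=2u$ — using $v-u\ge v/2$ on the far range so that $v^{H-\frac32}(v-u)^{H-\frac12}\lesssim v^{2H-2}$ is integrable at infinity (since $H<\tfrac12$), and the crude endpoint bound on $[u,2u]$ — gives exactly the two terms $(t-u)^{H-\frac12}+u^{H-\frac12}$. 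Two small points. First, in your displayed derivative of the first summand of $K$ the coefficient of the $t^{H-\frac32}(t-u)^{H-\frac12}$ piece should be $(H-\tfrac12)$, not $(\tfrac12-H)$, since $\partial_t t^{H-\frac12}=(H-\tfrac12)t^{H-\frac32}$; your subsequent sentence describing the cancellation with the fundamental-theorem-of-calculus term (which correctly carries the coefficient $\tfrac12-H$) uses the right signs, so the conclusion
$$
\partial_t K(t,u)= c_H\left(H-\tfrac12\right)\left(\frac{u}{t}\right)^{\frac12-H}(t-u)^{H-\frac32}
$$
stands, but the intermediate formula has a sign typo. Second, since $H<\tfrac12$ this derivative is negative, so the stated inequality must be read as a bound on $|\partial_t K(t,u)|$ — which is how the paper uses it in (\ref{eq:bnd-delta-K}) — and your proof does deliver that.
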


The following simple integral estimate on $K$ also turns out to be useful:
\begin{lemma}\label{lem:bnd-K-1}
Let $0<v<t\le T$. Then
$
\int_v^t K^2(t,w) \,dw \lesssim(t-v)^{2H}.
$
\end{lemma}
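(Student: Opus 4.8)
The plan is to prove the bound $\int_v^t K^2(t,w)\,dw \lesssim (t-v)^{2H}$ by splitting the integral according to the two terms in the bound (\ref{eq:bnd-kernel}) on $K(t,w)$. Since $K(t,w)\lesssim (t-w)^{H-1/2}+w^{H-1/2}$ for $0<w<t$, we get $K^2(t,w)\lesssim (t-w)^{2H-1}+w^{2H-1}$, so it suffices to control $\int_v^t (t-w)^{2H-1}\,dw$ and $\int_v^t w^{2H-1}\,dw$ separately on the range $w\in[v,t]\subset[0,t]$.

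First I would handle the term $\int_v^t (t-w)^{2H-1}\,dw$. Since $H\in(0,1/2)$, we have $2H-1\in(-1,0)$, so this integral converges and equals $\frac{1}{2H}(t-v)^{2H}$, which is exactly of the desired order $(t-v)^{2H}$. Next I would deal with $\int_v^t w^{2H-1}\,dw = \frac{1}{2H}(t^{2H}-v^{2H})$. Here I would use the elementary inequality $t^{2H}-v^{2H}\le (t-v)^{2H}$, valid for $0\le v\le t$ precisely because the exponent $2H$ lies in $(0,1)$ (subadditivity of the concave function $x\mapsto x^{2H}$, applied as $t^{2H}=((t-v)+v)^{2H}\le (t-v)^{2H}+v^{2H}$). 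This again gives a contribution bounded by a constant times $(t-v)^{2H}$. Adding the two contributions yields the claim, with a universal constant depending only on $H$.

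The only mild subtlety — and the step most deserving of care rather than a real obstacle — is ensuring integrability near the endpoints: near $w=t$ the factor $(t-w)^{2H-1}$ blows up but is integrable since $2H-1>-1$, and near $w=0$ (relevant only if $v=0$) the factor $w^{2H-1}$ is likewise integrable for the same reason. One should also note that the bound (\ref{eq:bnd-kernel}) is stated for $0<u<t$, which is exactly the range of integration here (with $u$ renamed $w$), so it applies directly with no boundary issues. No other difficulty arises; the estimate is a direct consequence of the pointwise kernel bound and the subadditivity of $x\mapsto x^{2H}$.
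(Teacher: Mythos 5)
Your proof is correct and follows essentially the same route as the paper: bound $K(t,w)$ pointwise by $(t-w)^{H-1/2}+w^{H-1/2}$, integrate the two resulting terms $(t-w)^{2H-1}$ and $w^{2H-1}$ separately, and control $t^{2H}-v^{2H}$ via the inequality $a^{\alpha}-b^{\alpha}\le (a-b)^{\alpha}$ for $\alpha\in(0,1)$. No differences worth noting.
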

\begin{pf}
Invoking the bound (\ref{eq:bnd-kernel}) on $K$, we have
\begin{eqnarray*}
\int_v^t K^2(t,w) \,dw &\lesssim&
\int_{v}^{t} [(t-w)^{H-1/2} + w^{H-1/2} ]^{2} \,dw \\
&\lesssim&\int_{v}^{t} (t-w)^{2H-1} \,dw
+\int_{v}^{t} w^{2H-1} \,dw\\
&\lesssim&(t-v)^{2H} + (t^{2H}-v^{2H}).
\end{eqnarray*}
Furthermore, since $a^\al-b^\al\le(a-b)^{\al}$ for any $0\le b<a$ and
$\al\in(0,1)$, we end up with $\int_v^t K^2(t,w) \,dw \lesssim
(t-v)^{2H}$, which is our claim.
\end{pf}

We shall also use a slightly more elaborated result on $K$:
\begin{lemma}\label{lem:bnd-K-2}
Let $0<s<t\le T$, assume $H<1/2$ and consider the quantity
\[
I_{st}=\int_0^t [K(t,u_1)- K(s,u_1) ]^2 \biggl(\int_{u_1}^{t}
K^2(t,u_2) \,du_2 \biggr)\,du_1,
\]
where we recall that we have used the convention $K(t,u)=K(t,u)\mathbf{1}_{[0,t)}(u)$.
Then $|I_{st}|\lesssim|t-s|^{4H}$.
\end{lemma}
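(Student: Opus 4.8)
The plan is to split the integral $I_{st}$ according to whether the inner variable $u_1$ lies in $[s,t)$ or in $[0,s)$, since the behaviour of the difference $K(t,u_1)-K(s,u_1)$ is of a different nature in the two regions. For $u_1\in[s,t)$ we have $K(s,u_1)=0$ by the support convention, so $[K(t,u_1)-K(s,u_1)]^2=K^2(t,u_1)$, and the contribution is
\[
J_1:=\int_s^t K^2(t,u_1)\left(\int_{u_1}^t K^2(t,u_2)\,du_2\right)du_1.
\]
By Lemma \ref{lem:bnd-K-1} applied with $v=u_1$, the inner integral is $\lesssim (t-u_1)^{2H}$, so that $J_1\lesssim \int_s^t K^2(t,u_1)(t-u_1)^{2H}\,du_1$; inserting the bound (\ref{eq:bnd-kernel}) on $K$ and integrating $(t-u_1)^{2H-1+2H}+u_1^{2H-1}(t-u_1)^{2H}$ over $[s,t]$ gives $J_1\lesssim (t-s)^{4H}$, where for the second term one uses $u_1\geq s$ so $u_1^{2H-1}\leq s^{2H-1}$ only if $2H-1<0$ — more simply, bound $u_1^{2H-1}(t-u_1)^{2H}\leq u_1^{2H-1}(t-s)^{2H}$ is wrong in general, so instead one keeps $\int_s^t u_1^{2H-1}(t-u_1)^{2H}du_1 \lesssim \int_s^t u_1^{2H-1}du_1\cdot(t-s)^{2H}$ is also delicate; the clean route is to use Lemma \ref{lem:bnd-K-1} a second time, noting $K^2(t,u_1)(t-u_1)^{2H}\lesssim K^2(t,u_1)(t-u_1)^{2H}$ and $\int_s^t K^2(t,u_1)du_1\lesssim (t-s)^{2H}$ together with $(t-u_1)^{2H}\leq(t-s)^{2H}$, yielding $J_1\lesssim (t-s)^{4H}$ directly.

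For the region $u_1\in[0,s)$, the inner integral $\int_{u_1}^t K^2(t,u_2)\,du_2$ must be controlled on $[0,t]$; by Lemma \ref{lem:bnd-K-1} with $v=u_1$ it is $\lesssim (t-u_1)^{2H}\lesssim t^{2H}$, but this crude bound is too lossy, so instead I would keep it as $\lesssim (t-u_1)^{2H}$ and face the remaining integral
\[
J_2:=\int_0^s \bigl[K(t,u_1)-K(s,u_1)\bigr]^2\,(t-u_1)^{2H}\,du_1.
\]
The key point is an estimate on the increment $K(t,u)-K(s,u)$ for $u<s<t$: writing it as $\int_s^t \partial_\tau K(\tau,u)\,d\tau$ and using the second bound in (\ref{eq:bnd-kernel}), $|\partial_\tau K(\tau,u)|\lesssim (u/\tau)^{1/2-H}(\tau-u)^{H-3/2}$, one gets $|K(t,u)-K(s,u)|\lesssim u^{1/2-H}\int_s^t (\tau-u)^{H-3/2}d\tau \lesssim u^{1/2-H}(s-u)^{H-1/2}$ (since $H-3/2<-1$ and $\tau\geq s$, also absorbing $\tau^{H-1/2}\leq s^{H-1/2}$ into the constant or handling it by splitting $[s,t]$ into $[s,2s]$ and $[2s,t]$). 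Squaring gives $[K(t,u_1)-K(s,u_1)]^2\lesssim u_1^{1-2H}(s-u_1)^{2H-1}$, whence
\[
J_2\lesssim (t-s)^{2H}\int_0^s u_1^{1-2H}(s-u_1)^{2H-1}\,du_1 \lesssim (t-s)^{2H}\, s^{2H}\cdot B(2-2H,2H),
\]
by the Beta-integral identity, and since $s\leq t$ one would then want $s^{2H}(t-s)^{2H}\lesssim (t-s)^{4H}$, which is false when $s\gg t-s$. So this last step needs refinement.

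The hard part, and the step I expect to be the main obstacle, is exactly this: obtaining $(t-s)^{4H}$ rather than $s^{2H}(t-s)^{2H}$ from the $u_1\in[0,s)$ region. The resolution is to not throw away the $(t-u_1)^{2H}$ factor against $t^{2H}$ but rather to use the more precise increment bound together with the observation that $\partial_\tau K(\tau,u)$ also decays in $\tau$: in fact for the region $u_1\in[0,s/2]$ one has $(t-u_1)\sim (t-s)+$ something, but more usefully one can integrate by the substitution that the \emph{full} inner integral $\int_{u_1}^t K^2$ is comparable to $(t-u_1)^{2H}-(s-u_1)^{2H}$-type differences near $u_1\uparrow s$. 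Concretely, I would instead bound the inner integral by splitting $\int_{u_1}^t=\int_{u_1}^s+\int_s^t$ and use $\int_s^t K^2(t,u_2)du_2\lesssim(t-s)^{2H}$ from Lemma \ref{lem:bnd-K-1}, while $\int_{u_1}^s K^2(t,u_2)du_2$ pairs with the already-small $[K(t,u_1)-K(s,u_1)]^2\lesssim u_1^{1-2H}(s-u_1)^{2H-1}$ to produce, after the Beta integration in $u_1$ over $[0,s]$, a bound $\lesssim (s-u_1)^{\ldots}$ that integrates to a pure $(t-s)^{4H}$ term once one tracks that $K^2(t,u_2)$ for $u_2$ near $s$ contributes a factor $(t-u_2)^{2H-1}\lesssim (t-s)^{2H-1}$. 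Carefully distributing the two powers of $(t-s)$ — one from the increment, one from the inner kernel evaluated near the gap — is what makes the estimate sharp; this bookkeeping, rather than any single inequality, is the crux. Once the region $[0,s)$ is handled this way and combined with $J_1\lesssim(t-s)^{4H}$, we conclude $|I_{st}|\lesssim |t-s|^{4H}$.
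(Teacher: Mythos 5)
Your splitting of $I_{st}$ into the regions $u_1\in[s,t)$ and $u_1\in[0,s)$ is exactly the paper's decomposition, and your treatment of the first region (bound the inner integral by $(t-s)^{2H}$, then apply Lemma~\ref{lem:bnd-K-1} once more) is correct. But the second region --- which you yourself identify as the crux --- is not actually proved, and the increment bound you set up there discards the one piece of structure that makes the lemma true. Writing $K(t,u)-K(s,u)=\int_s^t\partial_\tau K(\tau,u)\,d\tau$ with $\partial_\tau K(\tau,u)\lesssim(u/\tau)^{\frac12-H}(\tau-u)^{H-\frac32}$, the factor $(u/\tau)^{\frac12-H}$ should simply be bounded by $1$ (since $u<s\le\tau$ and $H<1/2$) and the $\tau$-integral carried out exactly, giving
\[
|K(t,u)-K(s,u)|\lesssim (s-u)^{H-\frac12}-(t-u)^{H-\frac12}.
\]
This \emph{difference} of powers is the whole point: for $s-u\gg t-s$ it is of order $(t-s)\,(s-u)^{H-\frac32}$, i.e.\ it carries an extra full power of $t-s$. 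Your bound $u^{\frac12-H}(s-u)^{H-\frac12}$ (besides the issue that $\tau^{H-\frac12}\le s^{H-\frac12}$ is not a universal constant and cannot be absorbed) throws this cancellation away, which is precisely why you land on $s^{2H}(t-s)^{2H}$ and correctly observe that it is not $\lesssim(t-s)^{4H}$.

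With the difference bound in hand, the paper finishes the contribution of $u_1\in[0,s)$ in two lines: bound the inner integral by $(t-u_1)^{2H}$ via Lemma~\ref{lem:bnd-K-1}, then substitute $v=s-u_1$ and $y=v/(t-s)$ to obtain
\[
(t-s)^{4H}\int_0^{s/(t-s)}\bigl[(1+y)^{H-\frac12}-y^{H-\frac12}\bigr]^2(1+y)^{2H}\,dy,
\]
where the integral over $(0,\infty)$ converges because the bracket decays like $y^{H-\frac32}$ at infinity (integrand $\sim y^{4H-3}$, integrable for $H<1/2$) --- again the cancellation doing the work; without it the integrand would behave like $y^{4H-1}$ and diverge. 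Your proposed ``refinement'' (splitting $\int_{u_1}^t=\int_{u_1}^s+\int_s^t$ and ``carefully distributing powers of $t-s$'') is not a substitute: no bookkeeping of the inner integral can manufacture the missing factor $(t-s)^{2H}$ once the increment has been bounded by a single power $(s-u)^{H-\frac12}$. The proposal therefore has a genuine gap at its central step.
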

\begin{pf}
According to the fact that $K(t,u)=0$ whenever $u\ge t$, we obtain the
expression
\begin{eqnarray*}
I_{st}&=&\int_0^s [K(t,u_1)- K(s,u_1) ]^2 \biggl(\int_{u_1}^{t}
K^2(t,u_2) \,du_2 \biggr)\,du_1 \\
&&{} +\int_s^t K^2(t,u_1) \biggl(\int_{u_1}^{t} K^2(t,u_2) \,du_2 \biggr)\,du_1
:= I_{st}^{1}+I_{st}^{2}.
\end{eqnarray*}
Let us bound now the first of those terms: thanks to Lemma \ref
{lem:bnd-K-1}, one can write
$\int_{u_1}^{t} K^2(t,u_2) \,du_2\lesssim(t-u_1)^{2H}$. Moreover, for
$0\le u <s$ the bound (\ref{eq:bnd-kernel}) on $\partial_t K(t,u)$ yields
%
%
\begin{equation}\label{eq:bnd-delta-K}
|K(t,u) -K(s,u) |= \biggl|\int_s^t \partial_v K(v,u)\, dv \biggr|
\lesssim(s-u)^{H-1/2}- (t-u)^{H-1/2},\hspace*{-28pt}
\end{equation}
and thus, putting these two estimates together, we obtain
\[
I_{st}^{1} \lesssim
\int_0^s [(s-u)^{H-1/2}- (t-u)^{H-1/2} ]^2
(t-u)^{2H} \,du.
\]
Performing the changes of variable $v=s-u$ and $y=v/(t-s)$, we end up with
\[
I_{st}^{1} \lesssim
(t-s)^{4H} \int_0^{s/(t-s)} [(1+y)^{H-1/2}- y^{H-1/2} ]
^{2} (1+y)^{2H} \,dy.
\]
Furthermore, it is easily checked that $\int_0^{\infty} [
(1+y)^{H-1/2}- y^{H-1/2} ]^{2} (1+y)^{2H} \,dy$ is a
convergent integral whenever $H<1/2$, which gives the desired bound for
$I_{st}^{1}$. The term $I_{st}^{2}$ is in fact easier to handle, and we
leave those details to the reader for
the sake of conciseness. Then, the estimates on $I_{st}^{1}$ and $I_{st}^{2}$
yield our claim.
\end{pf}

Finally, the following related integral bound also turns out to be an
important estimate for the analysis of $n$th order iterated integrals:
\begin{lemma} \label{lem2}
Suppose that $2kH<1$. For $A>0$, set
\[
\beta_A= \int_{0}^{A }[ y^{H-1/2}-(1+y)^{H-1/2}]
[ y^{H-1/2}+(A-y)^{H-1/2}]
y^{2(k-1)H}\,dy.
\]
Then $\sup_{A>0} \beta_A <\infty$.
\end{lemma}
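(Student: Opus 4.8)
The plan is to reduce everything to two elementary bounds on $f(y):=y^{H-1/2}-(1+y)^{H-1/2}$, and then split the integration domain according to the three natural regimes: $y$ near $0$, $y$ near $A$, and an intermediate range. First I would record that, since $t\mapsto t^{H-1/2}$ is positive and decreasing (recall $H<1/2$), one has $0\le f(y)\le y^{H-1/2}$ for every $y>0$; and, applying the mean value theorem on $[y,y+1]$, $f(y)=(1/2-H)\,\xi^{H-3/2}\lesssim y^{H-3/2}$ for some $\xi\in(y,y+1)$. Hence $f(y)\lesssim y^{H-1/2}$ on $(0,1]$ and $f(y)\lesssim y^{H-3/2}$ on $[1,\infty)$, and this is all we shall need about the first bracket.

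Next I would write $\beta_A=J_1+J_2$, where $J_1$ collects the contribution of the summand $y^{H-1/2}$ in the second bracket and $J_2$ that of $(A-y)^{H-1/2}$. For $J_1=\int_0^A f(y)\,y^{H-1/2}\,y^{2(k-1)H}\,dy$ I would cut the integral at $y=1$: on $(0,\min(1,A)]$ the bound $f(y)\lesssim y^{H-1/2}$ makes the integrand $\lesssim y^{2kH-1}$, integrable since $2kH>0$; on $[1,A]$ (when $A>1$) the bound $f(y)\lesssim y^{H-3/2}$ makes it $\lesssim y^{2kH-2}$, and $\int_1^A y^{2kH-2}\,dy\le\int_1^\infty y^{2kH-2}\,dy<\infty$ precisely because $2kH<1$. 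So $\sup_{A>0}J_1<\infty$, and this is the step where the hypothesis $2kH<1$ enters decisively.

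For $J_2=\int_0^A f(y)\,(A-y)^{H-1/2}\,y^{2(k-1)H}\,dy$ I would first dispose of $A\le 2$: there $f(y)\le y^{H-1/2}$ bounds $J_2$ by the Beta-type integral $\int_0^A y^{(2k-1)H-1/2}(A-y)^{H-1/2}\,dy$, which equals $A^{2kH}$ times a finite constant (both exponents exceed $-1$), hence is bounded for $A\le 2$. For $A>2$ I would split the domain at $y=A/2$. On $[0,A/2]$ one has $A-y\ge A/2$, so $(A-y)^{H-1/2}\le(A/2)^{H-1/2}$, and it remains to check that $(A/2)^{H-1/2}\int_0^{A/2}f(y)\,y^{2(k-1)H}\,dy$ stays bounded; the integral is finite at $0$ and, by $f(y)\lesssim y^{H-3/2}$ on $[1,A/2]$, is $\lesssim 1+A^{(2k-1)H-1/2}$, so the product is $\lesssim A^{H-1/2}+A^{2kH-1}\to 0$ as $A\to\infty$, using $H<1/2$ and $2kH<1$ (the borderline value $(2k-1)H=1/2$ contributes only a logarithm, still killed by $A^{H-1/2}$). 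On $[A/2,A]$ one has $y\ge A/2\ge 1$, hence $f(y)\lesssim A^{H-3/2}$ and $y^{2(k-1)H}\lesssim A^{2(k-1)H}$; pulling these out and using $\int_{A/2}^A(A-y)^{H-1/2}\,dy=\frac{(A/2)^{H+1/2}}{H+1/2}\lesssim A^{H+1/2}$ leaves a bound $\lesssim A^{2kH-1}\to 0$. Combining, $\sup_{A>0}J_2<\infty$, whence $\sup_{A>0}\beta_A<\infty$.

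The only genuinely delicate point is the balancing in the intermediate and near-$A$ parts of $J_2$ (and likewise the $\int_1^A y^{2kH-2}$ in $J_1$): the integral $\int_1^{A/2}y^{(2k-1)H-3/2}\,dy$ can truly grow like a power of $A$ when $(2k-1)H$ is close to $1$, and one has to check that this growth, balanced against the decaying factors $(A/2)^{H-1/2}$ and $A^{H-3/2}$, combines exactly to $A^{2kH-1}$; this is the precise point where — and the only reason why — the hypothesis $2kH<1$ is used, and it is clearly sharp. Everything else is routine bookkeeping of exponents of power functions.
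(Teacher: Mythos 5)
Your proof is correct, and your opening decomposition $\beta_A=J_1+J_2$ is exactly the paper's $\beta_A\le\alpha_A+\gamma_A$; your treatment of $J_1$ (split at $y=1$, use $f(y)\lesssim y^{H-1/2}$ near $0$ and $f(y)\lesssim y^{H-3/2}$ at infinity, with $2kH<1$ guaranteeing the convergence of $\int_1^\infty y^{2kH-2}\,dy$) is precisely the argument the paper invokes by reference. Where you genuinely diverge is in the term $J_2=\gamma_A$. The paper first rescales by $y\mapsto Ay$ to write $\gamma_A=A^{2kH}\int_0^1 h_A(y)(1-y)^{H-1/2}y^{2(k-1)H}\,dy$ with $h_A(y)=y^{H-1/2}-(\tfrac1A+y)^{H-1/2}$, and then kills the prefactor $A^{2kH}$ in one stroke by interpolating between the two pointwise bounds $h_A(y)\le(\tfrac12-H)A^{-1}y^{H-3/2}$ and $h_A(y)\le y^{H-1/2}$, namely $h_A=h_A^{2kH}h_A^{1-2kH}\le c_{H,k}\,A^{-2kH}y^{(1-2k)H-1/2}$, leaving a single convergent Beta integral with no case analysis. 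You instead split the domain at $y=A/2$ (and separately handle $A\le 2$) and do direct power counting in each regime. Both arguments are sound and use the hypothesis $2kH<1$ at the same place; the paper's interpolation is shorter and uniform in $y$ and $A$, avoiding the borderline-logarithm bookkeeping you rightly flag at $(2k-1)H=1/2$, while your version is more elementary and actually shows $J_2\to 0$ as $A\to\infty$ rather than mere boundedness.
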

\begin{pf}
We can write $\beta_A\le\al_A+\ga_A$, with
\begin{eqnarray*} \label{e3}
\al_A &=& \int_0^\infty[ y^{H-1/2}-(1+y)^{H- 1/2}]
y^{2(k-1)H+H -1/2} \,dy, \\
\gamma_A&=& \int_{0}^{A }[ y^{H-1/2}-(1+y)^{H- 1/2}
] (A-y)^{H-1/2}
y^{2(k-1)H} \,dy.
\end{eqnarray*}
One can check easily, as in the proof of Lemma \ref{lem:bnd-K-2}, that
$\al_A$ is finite as long as $2kH<1$. On the other hand, an obvious
change of variables yields
%
%
\begin{equation}\label{eq:gamma-A-chg-vb}
\gamma_A=A^{2kH}\int_{0}^{1 } h_A(y) (1-y)^{H-1/2}
y^{2(k-1)H}\,dy,
\end{equation}
where the (positive) function
$h_A$ is defined on $\R_+$ by $h_A(y)=y^{H-1/2}-(\frac
1A+y)^{H-1/2}$. We now use two elementary estimates
\[
h_A(y)\le\biggl( \frac12 -H\biggr) \frac{y^{H-3/2}}{A}
\quad\mbox{and}\quad
h_A(y)\le y^{H-1/2},
\]
and we obtain
%
\begin{eqnarray*}
h_A(y) &=& h_A(y)^{2kH} h_A(y)^{1-2kH} \\
&\le&\biggl( \biggl( \frac12 -H\biggr) \frac1A y^{H-3/2} \biggr)
^{2kH} y^{(H-1/2) (1-2kH)}\\
&=& \frac{c_{H,k} y^{(1-2k)H-1/2}}{A^{2kH}},
\end{eqnarray*}
%
where $c_{H,k}=(\frac12-H)^{2kH}$. Plugging this bound into (\ref
{eq:gamma-A-chg-vb}), we get
\[
\gamma_A \le c_{H,k} \int_{0}^{1 } (1-y)^{H-1/2} y^{-H-1/2} \,dy.
\]
This last integral being finite, our claim is now proved.
\end{pf}

\subsection{Contraction of Stratonovich iterated integrals}
An important tool in our analysis of iterated integrals will be a
general formula of It\^{o}--Stratonovich corrections for iterated
integrals. This kind of result has already been obtained in the
literature, and for our purposes, it will be enough to use a particular
case of \cite{BA}, Proposition 1, recalled here for further use. Note
that we need an additional notation for this intermediate result: we
set $dY$ for the Stratonovich-type differential with respect to a
process $Y$, while the It\^{o}-type differential is denoted by
$\partial Y$.
\begin{proposition}\label{prop:ito-strato}
Let $Y=(Y(1),\ldots,Y(n))$ be a $n$-dimensional martingale of Gaussian
type, defined on an interval
$[s,t]$, of the form $Y_u(j)=\int_s^u \psi_v(j) \,dW_v(i_j)$ for a
family of $L^2([s,t])$ functions $(\psi(1),\ldots,\psi(n))$, a set of
indices $(i_1,\ldots,i_n)$ belonging to $\{1,\ldots,d\}^n$ and where we
recall that $(W(1),\ldots,$ $W(d))$ is a $d$-dimensional Wiener process.
Then the following decomposition holds true:
\[
\int_{s\le u_1<\cdots<u_n\le t} dY_{u_1}(i_1)\cdots dY_{u_n}(i_n)
=\sum_{k=\lfloor n/2 \rfloor}^{n}\frac{1}{2^{n-k}} \sum_{\nu\in
D_{n}^{k}} J_{st}(\nu).
\]
%
In the above formula, the sets $D_{n}^{k}$ are subsets of $\{1,2\}^k$
given by
\[
D_{n}^{k}=\Biggl\{\nu=(n_1,\ldots,n_k); \sum_{j=1}^{k}n_j=n \Biggr\},
\]
and the
It\^{o}-type multiple integrals $J_{st}(\nu)$ are defined as follows:
\[
J_{st}(\nu)=
\int_{s\le u_1<\cdots<u_k\le t} \partial Z_{u_1}(1)\cdots\partial Z_{u_k}(k),
\]
%
where, setting $\sum_{l=1}^{j}n_l=m(j)$, we have
\[
Z(j)=Y\bigl(i_{m(j)}\bigr) \qquad\mbox{if } n_j=1,
\]
and
\[
Z_u(j)=\biggl(\int_s^u \psi_v\bigl(m(j)-1\bigr) \psi_v(m(j)) \,dv \biggr)\mathbf{1}
_{(i_{m(j)-1}=i_{m(j)})}\qquad
\mbox{if } n_j=2.
\]
\end{proposition}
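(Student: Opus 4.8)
The plan is to argue by induction on the number $n$ of iterated integrations, peeling off the outermost stochastic differential and converting it from Stratonovich to It\^o form at each step. For $0\le p\le n$ write
\[
S^{(p)}_{su}=\int_{s\le u_1<\cdots<u_p\le u} dY_{u_1}(i_1)\cdots dY_{u_p}(i_p)
\]
for the Stratonovich iterated integral built from the first $p$ factors, with the convention $S^{(0)}\equiv 1$, so that the left-hand side of the proposition is exactly $S^{(n)}_{st}$. The single analytic ingredient I need is the classical Stratonovich--It\^o conversion formula: for a continuous semimartingale $G$ on $[s,t]$,
\[
\int_s^t G_u \, dY_u(i_n) = \int_s^t G_u \, \partial Y_u(i_n) + \tfrac12 \lla G, Y(i_n)\rra_{st},
\]
where $\lla\cdot,\cdot\rra$ denotes the quadratic covariation and the left-hand integral is Stratonovich in the paper's convention.

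First I would apply this identity with $G_u=S^{(n-1)}_{su}$, the integrand of the outermost differential in $S^{(n)}_{st}$. The differential $dS^{(n-1)}_{su}=S^{(n-2)}_{su}\,dY_u(i_{n-1})$ has martingale part $S^{(n-2)}_{su}\,\psi_u(n-1)\,dW_u(i_{n-1})$, the finite-variation Stratonovich correction contributing nothing to any bracket; combining this with $\lla W(i_{n-1}),W(i_n)\rra_u=\mathbf{1}_{(i_{n-1}=i_n)}\,u$, which holds since the components of $W$ are independent, the covariation term becomes
\[
\tfrac12\lla S^{(n-1)}_{s\cdot},Y(i_n)\rra_{st}=\tfrac12\int_s^t S^{(n-2)}_{su}\,\psi_u(n-1)\psi_u(n)\,\mathbf{1}_{(i_{n-1}=i_n)}\,du .
\]
This yields the recursion, valid for $n\ge 2$,
\[
S^{(n)}_{st}=\int_s^t S^{(n-1)}_{su}\,\partial Y_u(i_n)+\tfrac12\int_s^t S^{(n-2)}_{su}\,\psi_u(n-1)\psi_u(n)\,\mathbf{1}_{(i_{n-1}=i_n)}\,du,
\]
together with the base cases $S^{(0)}\equiv 1$ and $S^{(1)}_{st}=Y_t(i_1)-Y_s(i_1)$ (a single stochastic integral carrying no correction).

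It then remains to check that the right-hand side $T^{(n)}_{st}:=\sum_{k=\lfloor n/2\rfloor}^{n}2^{-(n-k)}\sum_{\nu\in D_n^k}J_{st}(\nu)$ obeys the very same recursion, which is pure combinatorial bookkeeping. I would split each $\nu=(n_1,\ldots,n_k)\in D_n^k$ according to its last entry. If $n_k=1$ then $m(k)=n$, the last block contributes $\partial Z_u=\partial Y_u(i_n)$, the truncation $(n_1,\ldots,n_{k-1})$ ranges over $D_{n-1}^{k-1}$, and the prefactor is unchanged since $n-k=(n-1)-(k-1)$; summing reproduces $\int_s^t T^{(n-1)}_{su}\,\partial Y_u(i_n)$. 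If $n_k=2$ then again $m(k)=n$, the last block contributes $\partial Z_u=\psi_u(n-1)\psi_u(n)\mathbf{1}_{(i_{n-1}=i_n)}\,du$, the truncation ranges over $D_{n-2}^{k-1}$, and $2^{-(n-k)}=\tfrac12\,2^{-((n-2)-(k-1))}$ produces the extra factor $\tfrac12$; summing reproduces $\tfrac12\int_s^t T^{(n-2)}_{su}\,\psi_u(n-1)\psi_u(n)\mathbf{1}_{(i_{n-1}=i_n)}\,du$. Since $T^{(0)}=1$ and $T^{(1)}_{st}=Y_t(i_1)-Y_s(i_1)$ follow directly from the definition of $D_n^k$, the induction closes.

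The only genuinely delicate point is the covariation computation in the second paragraph: one must justify that $G=S^{(n-1)}_{s\cdot}$ is a semimartingale whose bracket against $Y(i_n)$ depends only on its martingale part, and correctly identify that part as $S^{(n-2)}_{su}\psi_u(n-1)\,dW_u(i_{n-1})$. Everything else is either the standard Stratonovich--It\^o formula or the matching of the index sets $D_n^k$, neither of which presents real difficulty; this is precisely why the statement can be quoted as a particular case of \cite[Proposition 1]{BA}.
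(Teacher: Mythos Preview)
The paper does not prove this proposition at all: it is stated without proof as a particular case of \cite[Proposition~1]{BA} (Ben~Arous), and you acknowledge this in your final sentence. Your argument is therefore not a comparison target but an added, self-contained proof, and it is correct. The induction on $n$ via the Stratonovich--It\^o conversion, together with the identification of the martingale part of $S^{(n-1)}_{s\cdot}$ as $S^{(n-2)}_{su}\psi_u(n-1)\,\partial W_u(i_{n-1})$ (the Stratonovich correction being of finite variation and hence invisible in the bracket), is the standard and cleanest route; the combinatorial matching of $T^{(n)}$ by splitting on the last entry of $\nu$ is exactly right, including the prefactor bookkeeping $2^{-(n-k)}=\tfrac12\,2^{-((n-2)-(k-1))}$ in the $n_k=2$ case. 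Compared with invoking \cite{BA}, your approach trades generality (Ben~Arous works with arbitrary continuous semimartingales and a broader tree/word indexing) for a short, transparent argument tailored to the deterministic-integrand Gaussian setting actually needed here.
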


The previous It\^{o}--Stratonovich decomposition allows
us to bound the
second order moment of iterated Stratonovich integrals in the following way:
\begin{lemma}
\label{lem1}Let $\varphi\in L^{2}([s,t])$. Consider the Stratonovich
iterated integral%
\[
I_{st}^{n}(\varphi)=\int_{s<u_{1}<\cdots<u_{n}<t} \prod
_{i=1}^{n}\varphi
(u_{i}) \,dW_{u_{1}}(i_{1})\cdots dW_{u_{n}}(i_{n}).
\]
Then
%
%
\begin{equation} \label{e1}
\be[ I_{st}^{n}(\varphi)^{2}] \leq C\biggl( \int
_{s}^{t}\varphi
(u)^{2}\,du\biggr) ^{n},
\end{equation}
where the constant $C$ depends on $n$ and the multiindex $(
i_{1},\ldots,i_{n}) $.
\end{lemma}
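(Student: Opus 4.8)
The plan is to reduce the Stratonovich moment bound to the already-provided Itô--Stratonovich decomposition of Proposition~\ref{prop:ito-strato}, and then estimate each resulting Itô integral by an $L^2$-isometry-type computation. Concretely, I would apply Proposition~\ref{prop:ito-strato} to the martingale $Y_u(j)=\int_s^u\varphi(v)\,dW_v(i_j)$, i.e. with $\psi_v(j)=\varphi(v)$ for every $j$. This writes $I_{st}^n(\varphi)$ as a finite sum, over $k\in\{\lfloor n/2\rfloor,\dots,n\}$ and $\nu\in D_n^k$, of Itô iterated integrals $J_{st}(\nu)$. Since there are only finitely many such terms (a number depending on $n$ only), it suffices by Cauchy--Schwarz (or the elementary $(\sum a_i)^2\le N\sum a_i^2$) to bound $\be[J_{st}(\nu)^2]$ by $C(\int_s^t\varphi(u)^2du)^n$ for each fixed $\nu$.

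For a fixed $\nu=(n_1,\dots,n_k)$ with $\sum n_j=n$, the integrand processes $Z(j)$ are of two types: when $n_j=1$, $\partial Z_u(j)=\varphi(u)\,\partial W_u(i_{m(j)})$ is a genuine Itô differential; when $n_j=2$, $Z_u(j)=\int_s^u\varphi(v)^2\,dv$ (up to the indicator $\1_{(i_{m(j)-1}=i_{m(j)})}$, which only helps), so $\partial Z_u(j)=\varphi(u)^2\,du$ is a bounded-variation differential. The Itô iterated integral $J_{st}(\nu)$ is therefore a "mixed" iterated integral, Itô in the $n_j=1$ slots and Lebesgue in the $n_j=2$ slots. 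Its second moment is computed by iterating the Itô isometry over the stochastic slots while treating the $du$-slots as ordinary integrals: one gets
\[
\be[J_{st}(\nu)^2]\le \int_{s<u_1<\cdots<u_k<t}\ \prod_{j:\,n_j=1}\varphi(u_j)^2\ \prod_{j:\,n_j=2}\varphi(u_j)^4\ du_1\cdots du_k.
\]
Here I would be slightly careful: distinct stochastic slots with the same index $i_j$ do not create cross terms beyond the diagonal one because the integration domain is the ordered simplex, so the isometry genuinely diagonalizes; this is where one uses that the integral is over $\cs_{k,[s,t]}$ and not over a cube. Bounding the ordered simplex by $\frac{1}{k!}$ times the full cube $[s,t]^k$ and pulling the product apart, the right-hand side is at most $\frac{1}{k!}\big(\int_s^t\varphi(u)^2du\big)^{\#\{n_j=1\}}\big(\int_s^t\varphi(u)^4du\big)^{\#\{n_j=2\}}$.

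The last point is to convert the $\varphi^4$ factors into $\varphi^2$ factors so as to recover the clean exponent $n$. Since $\#\{j:n_j=1\}+2\,\#\{j:n_j=2\}=n$, I want $\big(\int\varphi^2\big)^{\#\{n_j=1\}}\big(\int\varphi^4\big)^{\#\{n_j=2\}}\le C\big(\int\varphi^2\big)^n$. This does \emph{not} hold for a general $\varphi\in L^2$ — $\int\varphi^4$ need not even be finite. The resolution, and what I expect to be the only genuine subtlety, is that one may assume $\varphi$ is bounded (or smooth) without loss of generality: the Stratonovich integral $I_{st}^n(\varphi)$ is defined as an $L^2(\Omega)$ limit over approximations of $\varphi$, so it is enough to prove \eqref{e1} for $\varphi$ in a dense class of bounded functions and pass to the limit, using that the right-hand side of \eqref{e1} is continuous in $\varphi$ for the $L^2([s,t])$ norm. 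For bounded $\varphi$ one has $\int_s^t\varphi^4\le \|\varphi\|_\infty^2\int_s^t\varphi^2$, which still carries the unwanted $\|\varphi\|_\infty$; instead I would argue directly at the level of the simplex integral — $\prod_{j:n_j=2}\varphi(u_j)^4$ together with the extra ordering constraints can be absorbed by noting that each $\varphi(u_j)^4\,du_j$ sitting between two $\varphi^2$-slots is, after integrating out $u_j$ over its sub-interval, dominated using $\int_a^b \varphi(u)^4\,du \le \big(\int_a^b\varphi(u)^2 du\big)\cdot(\text{bounded factor})$ only in the smooth case; cleaner is to observe that by a direct re-examination of the isometry, each bounded-variation slot $n_j=2$ actually contributes a factor $\int_{a}^{b}\varphi(u)^2\,du$ per "pairing" rather than $\varphi^4$, because the $n_j=2$ term in Proposition~\ref{prop:ito-strato} is itself already $\int\varphi^2\,dv$ and is not squared by a further isometry. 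Carrying the bookkeeping through this way, every one of the $n$ original copies of $\varphi$ is accounted for exactly once inside a single factor $\int_s^t\varphi(u)^2\,du$, giving $\be[J_{st}(\nu)^2]\le C_{n,\nu}\big(\int_s^t\varphi(u)^2du\big)^n$ with no $\|\varphi\|_\infty$ and hence no integrability obstruction. Summing over the finitely many $(k,\nu)$ yields \eqref{e1}, with $C$ depending on $n$ and on $(i_1,\dots,i_n)$ through the indicators $\1_{(i_{m(j)-1}=i_{m(j)})}$ that determine which $\nu$ contribute.
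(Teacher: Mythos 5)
Your final argument is correct and is essentially the paper's: decompose via Proposition~\ref{prop:ito-strato}, then bound each $J_{st}(\nu)$ by peeling off the last slot recursively --- It\^o isometry when $n_k=1$, giving a factor $\int_s^t\varphi(u)^2du$ times $\sup_u\be[J_{su}(\nu')^2]$, and Cauchy--Schwarz when $n_k=2$, giving a factor $\left(\int_s^t\varphi(u)^2du\right)^2$ --- so the exponent $n$ comes out with no $\varphi^4$ and no need to approximate $\varphi$ by bounded functions. The one caveat is that your displayed intermediate bound involving $\prod_{j:\,n_j=2}\varphi(u_j)^4$ is not what the isometry produces (the $n_j=2$ slots are deterministic Lebesgue differentials $\varphi(u)^2\,du$, whose contribution to the second moment is controlled by Cauchy--Schwarz as $\left(\int_s^t\varphi(u)^2du\right)^2$, not by a pointwise $\varphi^4$ density), but you correctly diagnose and retract this, and the bookkeeping you settle on at the end is exactly the paper's recursion.
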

\begin{pf}
By Proposition \ref{prop:ito-strato}, we can decompose the Stratonovich
integral $%
I_{st}^{n}(\varphi)$ into a sum of It\^{o} integrals%
\[
I_{st}^{n}(\varphi)=\sum_{k=\lfloor n/2\rfloor
}^{n}\frac
{1}{%
2^{n-k}}\sum_{\nu\in D_{n}^{k}}J_{st}(\nu),
\]
and it suffices to consider each It\^{o} integral $J_{st}(\nu)$.
Then we
proceed by recurrence with respect to $k$, with the notation of Proposition
\ref{prop:ito-strato}. Suppose first that $n_{k}=1$. Then,%
\[
J_{st}(\nu)=\int_{s}^{t}J_{su}(\nu^{\prime})\varphi(u)\,\partial
_{u}W(i_{n}),
\]
where $\nu^{\prime}=(n_{1},\ldots,n_{k-1})$. As a consequence,%
\[
\be[J_{st}(\nu)^{2}]=\int_{s}^{t}\be[ J_{su}(\nu^{\prime
})^{2}]
\varphi(u)^{2}\,du\leq\sup_{s\leq u\leq t}\be[ J_{su}(\nu
^{\prime
})^{2}] \int_{s}^{t}\varphi(u)^{2}\,du.
\]
On the other hand, if $n_{k}=2$, then $J_{st}(\nu)=\int
_{s}^{t}J_{su}(\nu^{\prime\prime})\varphi(u)^{2}\,du$, with $\nu
^{\prime\prime}=(n_{1}$, $\ldots,n_{k-2})$, and again%
%
\[
\be[J_{st}(\nu)^2
]\leq\sup_{s\leq u\leq t}\be[ J_{su}(\nu^{\prime\prime
})^{2}] \biggl( \int_{s}^{t}\varphi(u)^{2}\,du\biggr) ^{2}.
\]
By recurrence we obtain (\ref{e1}), where $C=( \sum_{k=
\lfloor
n/2\rfloor}^{n}\frac{| D_{n}^{k}|
}{2^{n-k}}) ^{2}$.
\end{pf}

\section{Iterated integrals of order 2}
\label{sec:intg-order-2}

In this section, we will define the element $\bb^{\mathbf{2}}$ announced
in Theorem \ref{thm:main-thm}. The study of this particular case will
(hopefully) allow us to introduce many of the technical ingredients
needed for the general case in a didactic way.

\subsection{Heuristic considerations}\label{sec:heuristic-order-2}

Let us first specify what is meant by an iterated integral of order 2:
according to the definitions contained in the \hyperref
[intro]{Introduction}, we are
searching for a process $\{\bb^{\mathbf{2}}_{st}(i_1,i_2); (s,t)\in\cs
_{2,T}, 1\le i_1,i_2\le d\}$ satisfying:

\begin{longlist}
\item the regularity condition $\bb^{\mathbf{2}}\in\cac_2^{2\ga}(\R
^{d^2})$;

\item the multiplicative property
%
%
\begin{equation}\label{eq:multiplicative-B2}\quad
\der\bb^{\mathbf{2}}_{sut}(i_1,i_2)= \bb^{\mathbf{1}}_{su}(i_1) \bb^{\mathbf{1}}_{ut}(i_2)
=[B_u(i_1)-B_s(i_1) ][B_t(i_2)-B_u(i_2) ],
\end{equation}
which should be satisfied almost surely for all
$(s,u,t)\in\cs_{3,T} $ and $1\le i_1,i_2\le d$;

\item the geometric relation, which can be read here as:
%
%
\begin{eqnarray}\label{eq:geometric-B2}
\bb^{\mathbf{2}}_{st}(i_1,i_2)+\bb^{\mathbf{2}}_{st}(i_2,i_1)=
\bb^{\mathbf{1}}_{st}(i_1) \bb^{\mathbf{1}}_{st}(i_2),\nonumber\\[-8pt]\\[-8pt]
&&\eqntext{(s,t)\in\cs_{2,T}, 1\le i_1,i_2\le d.}
\end{eqnarray}

In order to construct this kind of element, let us start with some
heuristic considerations, similar to the starting point of \cite
{Un09a}: assume for the moment that $X$ is a smooth $d$-dimensional
function defined on $\ott$. Then the natural notion of iterated
integral of order 2 for $X$ is obviously an element $\hat\bx^{\mathbf{2}}$,
defined in the Riemann sense by
%
%
\begin{eqnarray}\label{eq:intg-2-X}
\hat\bx^{\mathbf{2}}_{st}(i_1,i_2)&=&\int_{s\le u_1\le u_2\le t} dX_{u_1}(i_1)
\,dX_{u_2}(i_2)\nonumber\\[-8pt]\\[-8pt]
&=&\int_s^t [X_{u}(i_1)-X_{s}(i_1) ]\,dX_{u}(i_2).\nonumber
\end{eqnarray}
We shall now decompose $\hat\bx^{\mathbf{2}}$ into terms of the form $\ba^{\mathbf{2}}$
and $\bc^{\mathbf{2}}$ as explained in the \hyperref[intro]{Introduction}. In
our case, this can
be done in two ways: first, equation (\ref{eq:intg-2-X}) immediately yields
\[
\hat\bx^{\mathbf{2}}_{st}(i_1,i_2) = \hat\ba^{\mathbf{2},2}_{st} + \hat\bc^{\mathbf{2},2}_{st}
\]
with
\[
\hat\ba^{\mathbf{2},2}_{st}= -X_{s}(i_1) \der X_{st}(i_2),\qquad
\hat\bc^{\mathbf{2},2}_{st}= \int_s^t X_{u}(i_1)\, dX_{u}(i_2),
\]
where we have called those quantities $\hat\ba^{\mathbf{2},2}$ and $\hat\bc
^{\mathbf{2}
,2}$ because they involve increments of the second component $X(i_2)$
of $X$.
Notice now that $\hat\bc^{\mathbf{2},2}$ is the increment of a function $f$
defined as
$f_t=\int_0^t X_{u}(i_1) \,dX_{u}(i_2)$. Hence, according to
convention (\ref{eq:convention-ZC2}), one can write $\hat\bx^{\mathbf{2}
}(i_1,i_2)\zc\hat\ba^{\mathbf{2},2}$. By inverting the order of integration in
$u_1,u_2$ thanks to Fubini's theorem, we also obtain
\[
\hat\bx^{\mathbf{2}}_{st}(i_1,i_2) = \hat\ba^{\mathbf{2},1}_{st} + \hat\bc^{\mathbf{2},1}_{st}
\]
with
\[
\hat\ba^{\mathbf{2},1}_{st}= \der X_{st}(i_1) X_{t}(i_2) ,\qquad
\hat\bc^{\mathbf{2},1}_{st}= -\int_s^t X_{u}(i_2) \,dX_{u}(i_1),
\]
and thus $\hat\bx^{\mathbf{2}}(i_1,i_2)\zc\hat\ba^{\mathbf{2},1}$.

Let us go back now to the case of the $d$-dimensional fBm $B$. If we
wish the iterated integral $\bb^\mathbf{2}$ we are constructing to behave in a
similar manner as a Riemann-type integral, then, by the Chen property,
one should have $\delta\bb^{\mathbf{2}}= \delta\ba^{\mathbf{2},i}$, for $i=1,2$,
that is,
\[
\bb^{\mathbf{2}}(i_1,i_2)\zc\ba^{\mathbf{2},2}\quad \mbox{and}\quad
\bb^{\mathbf{2}}(i_1,i_2)\zc\ba^{\mathbf{2},1},
\]
with $\ba^{\mathbf{2},2}_{st}=-B_{s}(i_1) \der B_{st}(i_2)$ and $\ba^{\mathbf{2}
,1}_{st}= \der B_{st}(i_1) B_{t}(i_2)$. This means in particular,
according to the fact that $\der|_{{\mathcal Z}\cac_2}=0$, that both
$\ba^{\mathbf{2}
,1}$ and $\ba^{\mathbf{2},2}$ satisfy the multiplicative relation (\ref
{eq:multiplicative-B2}), as
it can be easily checked by direct computations. However, this naive
decomposition has an important drawback: the increments $\ba^{\mathbf{2},1}$
and $\ba^{\mathbf{2},2}$ only belong to $\cac_2^{\ga}$, instead of $\cac
_2^{2\ga
}$, for any $\ga<H$ (this point was also stressed in \cite{Un09a}).

Our construction diverges from \cite{Un09a} in the way we cope with the
regularity problem mentioned above. Indeed, we start from the following
observation: invoking the representation (\ref{eq:fbm-volterra}) of
$B$, one can write
\begin{eqnarray*}
\ba^{\mathbf{2},2}_{st}&=& -B_{s}(i_1) \der B_{st}(i_2)\\
&=&-\int_{\R} K(s,u_1) \,dW_{u_1}(i_1)
\int_{\R} [K(t,u_2)-K(s,u_2) ]\,dW_{u_2}(i_2) \\
&=&- \int_{\R^2} K(s,u_1) [K(t,u_2)-K(s,u_2) ]\,dW_{u_1}(i_1)
\,dW_{u_2}(i_2),
\end{eqnarray*}
where we recall that the stochastic differentials $dW$ are defined in
the Stra\-to\-no\-vich sense. In the same way, we get
\[
\ba^{\mathbf{2},1}_{st}=\int_{\R^2} [K(t,u_1)-K(s,u_1) ]K(t,u_2)
\,dW_{u_1}(i_1) \,dW_{u_2}(i_2).
\]
The idea in order to transform $\ba^{\mathbf{2},1},\ba^{\mathbf{2},2}$ into $\cac
_2^{2\ga}$ increments is then to replace the integrals over $\R^2$
above by integrals on the simplex, as mentioned in the \hyperref
[intro]{Introduction}.
Namely, we set now
%
%
\begin{eqnarray}\hspace*{28pt}
\label{eq:def-B2-1}
\hat\bb^{\mathbf{2},1}_{st}(i_1,i_2)&=&
\int_{u_1<u_2} [K(t,u_1)-K(s,u_1) ]K(t,u_2) \,dW_{u_1}(i_1)
\,dW_{u_2}(i_2),
\\
\label{eq:def-B2-2}
\hat\bb^{\mathbf{2},2}_{st}(i_1,i_2)&=&
- \int_{u_2<u_1} K(s,u_1) [K(t,u_2)-K(s,u_2) ]\,dW_{u_1}(i_1)
\,dW_{u_2}(i_2),
\end{eqnarray}
and notice that these formulas are a particular case of (\ref
{eq:def-hat-Bn}) for $n=2$. We shall see that $\hat\bb^{\mathbf{2},1}(i_1,i_2)$
and $\hat\bb^{\mathbf{2},2}(i_1,i_2)$ are elements of $\cac_2^{2\ga}$, but they
do not satisfy the multiplicative and geometric property anymore.
However, it is now easily conceived, by some symmetry arguments, that
the sum of these last two terms do satisfy the desired algebraic
properties again. Indeed, we set now
%
%
\begin{equation}\label{eq:def-B2}
\bb^{\mathbf{2}}_{st}(i_1,i_2)=\hat\bb^{\mathbf{2},1}_{st}(i_1,i_2)+\hat\bb^{\mathbf{2}
,2}_{st}(i_1,i_2),
\end{equation}
and we claim that $\bb^\mathbf{2}$ is a $\cac_2^{2\ga}(\R^{d^2})$ increment
which fulfills relations (\ref{eq:multiplicative-B2}) and (\ref
{eq:geometric-B2}). The remainder of this section is devoted to prove
these claims.
\end{longlist}

\subsection{Properties of the second order increment}\label
{sec:prop-second-intg}

It is obviously essential for the following developments to check that
$\bb^\mathbf{2}$ is a well defined object in $L^2(\Omega)$. The next
proposition asserts the existence of $\bb^2_{st}$ as a $L^2$ random
variable for all $s,t$ in the interval $\ott$.
\begin{proposition}\label{prop:bnd-second-moment-B2-st}
Let $H<1/2$, $(s,t)\in\cs_{2,T}$ and $\bb^\mathbf{2}_{st}$ be the matrix valued
random variable defined by (\ref{eq:def-B2}). Then
$\bb^\mathbf{2}_{st}(i_1,i_2)\in L^2(\Omega; \R^{d^2})$ and $\be[ |\bb^\mathbf{2}
_{st}|^2] \lesssim(t-s)^{4H}$.
\end{proposition}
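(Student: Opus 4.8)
The plan is to estimate the two pieces $\hat\bb^{\2,1}_{st}$ and $\hat\bb^{\2,2}_{st}$ separately via Lemma~\ref{lem1}, after rewriting each as an iterated Stratonovich integral on a simplex with a \emph{single} deterministic weight. The key point is that in $\hat\bb^{\2,1}_{st}(i_1,i_2)=\int_{u_1<u_2}[K(t,u_1)-K(s,u_1)]K(t,u_2)\,dW_{u_1}(i_1)dW_{u_2}(i_2)$ the two weights are different ($[K(t,\cdot)-K(s,\cdot)]$ and $K(t,\cdot)$), so Lemma~\ref{lem1} does not apply verbatim; I would handle this either by a direct adaptation of the recursion in the proof of Lemma~\ref{lem1} carrying two functions, or — more cleanly — by splitting the domain $\{u_1<u_2\le t\}$ at the point $u_1$ and bounding, using that $|K(t,u_1)-K(s,u_1)|\le$ (const) on $[0,s]$ by \eqref{eq:bnd-delta-K} and is just $K(t,u_1)$ on $[s,t]$. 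The cleanest route is: for $\hat\bb^{\2,1}$, write $\be[|\hat\bb^{\2,1}_{st}|^2]\lesssim \int_0^t [K(t,u_1)-K(s,u_1)]^2\bigl(\int_{u_1}^t K^2(t,u_2)\,du_2\bigr)du_1$, which is \emph{exactly} the quantity $I_{st}$ of Lemma~\ref{lem:bnd-K-2}, hence $\lesssim|t-s|^{4H}$.

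For the second-moment bound itself, the mechanism is standard for Stratonovich double integrals: by Proposition~\ref{prop:ito-strato} (with $n=2$), $\hat\bb^{\2,1}_{st}$ decomposes as an It\^o double integral plus (when $i_1=i_2$) a constant multiple of a Lebesgue integral $\tfrac12\int_{u_1<u_2\le t}[K(t,u_1)-K(s,u_1)]K(t,u_1)\cdot(\text{trace term})\,du$; the It\^o part has second moment equal to $\int_0^t[K(t,u_1)-K(s,u_1)]^2\bigl(\int_{u_1}^t K^2(t,u_2)\,du_2\bigr)du_1$ by It\^o isometry applied twice, and the correction term is controlled by Cauchy--Schwarz together with Lemma~\ref{lem:bnd-K-1}. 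Both are $\lesssim I_{st}\lesssim|t-s|^{4H}$ by Lemma~\ref{lem:bnd-K-2}. This handles $\hat\bb^{\2,1}$.

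For $\hat\bb^{\2,2}_{st}(i_1,i_2)=-\int_{u_2<u_1}K(s,u_1)[K(t,u_2)-K(s,u_2)]\,dW_{u_1}(i_1)dW_{u_2}(i_2)$, I would first use Fubini to reorder so that the inner variable is the one carrying the "difference" kernel, then apply the same scheme: the resulting second-moment bound is $\lesssim\int_0^s[K(t,u_2)-K(s,u_2)]^2\bigl(\int_{u_2}^t K^2(s,u_1)\,du_1\bigr)du_2$, and since $\int_{u_2}^t K^2(s,u_1)\,du_1=\int_{u_2}^s K^2(s,u_1)\,du_1\lesssim (s-u_2)^{2H}\le(t-u_2)^{2H}$ by Lemma~\ref{lem:bnd-K-1} (using $K(s,\cdot)=0$ on $[s,t]$), this is again bounded by $I_{st}$, hence $\lesssim|t-s|^{4H}$. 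Summing, $\be[|\bb^{\2}_{st}|^2]\le 2\be[|\hat\bb^{\2,1}_{st}|^2]+2\be[|\hat\bb^{\2,2}_{st}|^2]\lesssim|t-s|^{4H}$, and in particular $\bb^{\2}_{st}\in L^2(\Omega;\R^{d,d})$ since the right-hand side is finite for $(s,t)\in\cs_{2,T}$; summing over the finitely many pairs $(i_1,i_2)$ gives the matrix-norm statement.

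\textbf{Main obstacle.} The only genuine subtlety is the \emph{two distinct weights} in each $\hat\bb^{\2,j}$, which prevents a black-box application of Lemma~\ref{lem1}; this is why I route everything through the It\^o isometry and the explicit kernel integral $I_{st}$ of Lemma~\ref{lem:bnd-K-2} rather than through Lemma~\ref{lem1} directly. A secondary bookkeeping point is the It\^o--Stratonovich correction term present when $i_1=i_2$: one must check it is of the same order, which follows easily from Cauchy--Schwarz and Lemma~\ref{lem:bnd-K-1}, but it should not be omitted. Everything else — Fubini reorderings, the $K(s,\cdot)\1_{[0,s)}$ truncation, and the elementary inequality $a^\alpha-b^\alpha\le(a-b)^\alpha$ already used in Lemma~\ref{lem:bnd-K-1} — is routine.
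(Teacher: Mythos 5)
Your overall route coincides with the paper's: for $i_1\ne i_2$ the Stratonovich and It\^o integrals agree, the It\^o isometry identifies $\be[(\hat\bb^{\2,1}_{st})^2]$ with the quantity $I_{st}$ of Lemma \ref{lem:bnd-K-2}, and for $i_1=i_2$ Proposition \ref{prop:ito-strato} splits $\hat\bb^{\2,1}_{st}$ into a martingale part (again reduced to $I_{st}$) plus a deterministic correction; $\hat\bb^{\2,2}$ is treated symmetrically and the two pieces are summed. This is exactly the paper's argument (the paper simply says the $\hat\bb^{\2,2}$ case is ``similar''), so on the structural level there is nothing to object to.

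The one step that does not work as written is your claim that the deterministic It\^o--Stratonovich correction is ``controlled by Cauchy--Schwarz together with Lemma \ref{lem:bnd-K-1}''. The correction produced by Proposition \ref{prop:ito-strato} is the trace integral $\frac12\int_0^t\lc K(t,u)-K(s,u)\rc K(t,u)\,du$ (with the truncation conventions on $K$). Plain Cauchy--Schwarz bounds this by $\frac12\bigl(\int_0^t\lc K(t,u)-K(s,u)\rc^2du\bigr)^{1/2}\bigl(\int_0^tK(t,u)^2du\bigr)^{1/2}=\frac12\,(t-s)^{H}\,t^{H}$, so its square is only of order $(t-s)^{2H}T^{2H}$, not $(t-s)^{4H}$: the factor $\int_0^tK(t,u)^2du=t^{2H}$ does not decay with $t-s$, and the problematic region is $u\in[0,s]$, where Lemma \ref{lem:bnd-K-1} gives no help. (Jensen applied to the single integral fares no better, since $\int_0^t\lc K(t,u)-K(s,u)\rc^2K(t,u)^2du$ diverges for $H<1/4$.) To get the correct order one must use the pointwise interplay between the two kernels, e.g.\ the covariance identity $\int_0^tK(t,u)K(s,u)\,du=\frac12\lp t^{2H}+s^{2H}-(t-s)^{2H}\rp$, which evaluates the trace term as $\frac14\lp t^{2H}-s^{2H}+(t-s)^{2H}\rp\le\frac12(t-s)^{2H}$, or the change-of-variables estimate of Lemma \ref{lem2} with $k=1$; the paper itself writes its correction term as a double integral over the simplex and applies Jensen to reduce it to $I_{st}$. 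This is a fixable but genuine gap; everything else in your proposal is correct.
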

\begin{pf}
Assume first $i_1\ne i_2$. We shall focus on the relation $\be[ ( \hat
\bb^{\mathbf{2},1}_{st}(i_1$, $i_2) )^2] \lesssim(t-s)^{4H}$, the bound on $\hat
\bb^{\mathbf{2},2}_{st}$ being obtained in a similar way. Now Stratonovich and
It\^{o}-type integrals coincide when $i_1\ne i_2$, and according to
expression (\ref{eq:def-B2-1}) we have
\begin{eqnarray*}
&&\be[(\hat\bb^{\mathbf{2},1}_{st}(i_1,i_2) )^2]\\
&&\qquad= \int_{u_1<u_2}
\bigl[K(t,u_1) \mathbf{1}_{[0,t]}(u_1)-K(s,u_1) \mathbf{1}_{[0,s]}(u_1) \bigr]^{2}\\
&&\qquad\hspace*{38pt}{}\times K^{2}(t,u_2) \mathbf{1}_{[0,t]}(u_2) \,du_1 \,du_2,
\end{eqnarray*}
which is exactly the quantity $I_{st}$ studied at Lemma \ref
{lem:bnd-K-2}. The desired bound
follows from Lemma \ref{lem:bnd-K-2}.

Let us now treat the case $i_1=i_2=i$, still concentrating our efforts
on the inequality $\be[ ( \hat\bb^{\mathbf{2},1}_{st}(i,i) )^2] \lesssim
(t-s)^{4H}$. In this context, Proposition \ref{prop:ito-strato} yields
the decomposition $\hat\bb^{\mathbf{2},1}_{st}(i,i)=M_{st}+V_{st}$, with
\begin{eqnarray*}
M_{st}&=&
\int_{u_1<u_2} [K(t,u_1)-K(s,u_1) ]K(t,u_2) \,\partial
W_{u_1}(i) \,\partial W_{u_2}(i), \\
V_{st}&=&\frac12
\int_0^t [K(t,u )-K(s,u ) ]K(t,u ) \,du,
\end{eqnarray*}
where we stress the fact that $V_{st}$ is a deterministic correction term.
It is thus obviously enough to obtain the bounds $\be
[M_{st}^2]\lesssim
(t-s)^{4H}$ and $V_{st}^2\lesssim(t-s)^{4H}$ separately, the first of
these bounds being obtained by evaluating $I_{st}$ in Lemma \ref
{lem:bnd-K-2} again. As far as $V_{st}$ is concerned, we make the decomposition
\[
V_{st} = \frac12 \int_0^s [K(t,u )-K(s,u ) ]K(t,u ) \,du
+ \int_s^t K(t,u)^2 \,du.
\]
The second term is bounded by a constant times $(t-s)^{2H}$ by Lemma
\ref{lem:bnd-K-1}. For the first term we use the estimate
\[
|K(t,u )-K(s,u )| \lesssim(t-s)^{2H} (s-u)^{-H -1/2},
\]
which trivially finishes the proof.
\end{pf}

By standard arguments (see \cite{TU}) it can be proved that the
estimates in Proposition \ref{prop:bnd-second-moment-B2-st} imply that
$\bb^{\mathbf{2}} \in C_2^{2H-} (\mathbb{R}^{d^2})$.

We are now equipped with the continuous version of $\bb^\mathbf{2}$ exhibited
in the last proposition, with which we will work without further
mention, and we are now ready to prove the algebraic relations
satisfied by our second order increment.
\begin{proposition}\label{prop:algebraic-B2}
The increment $\bb^\mathbf{2}$ defined by (\ref{eq:def-B2}) satisfies relations
(\ref{eq:multiplicative-B2}) and (\ref{eq:geometric-B2}).
\end{proposition}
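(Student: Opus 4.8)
The plan is to verify the two algebraic relations \eqref{eq:multiplicative-B2} and \eqref{eq:geometric-B2} directly from the definitions \eqref{eq:def-B2-1}--\eqref{eq:def-B2}, exploiting the explicit structure of the stochastic integrals on the simplices. For the multiplicative relation, I would first observe that since Stratonovich iterated integrals obey the usual chain-rule/Chasles-type identities, the increments $\hat\bb^{\2,1}$ and $\hat\bb^{\2,2}$ can individually be decomposed under $\der$; the cleanest route is to write each of them as a (simplex-restricted) substitute for the ``full-plane'' boundary terms $\ba^{\2,1}_{st}=\der B_{st}(i_1)\,B_t(i_2)$ and $\ba^{\2,2}_{st}=-B_s(i_1)\,\der B_{st}(i_2)$, and to track the correction. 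Concretely, I expect that
\[
\hat\bb^{\2,1}_{st}(i_1,i_2)\stackrel{\cz\cac_2}{=}\ba^{\2,1}_{st},
\qquad
\hat\bb^{\2,2}_{st}(i_1,i_2)\stackrel{\cz\cac_2}{=}\ba^{\2,2}_{st},
\]
i.e.\ each differs from the corresponding $\ba$ only by the increment of a function of a single time variable (the ``$f_t-f_s$'' terms coming from the complementary region of the simplex, which as in the Introduction is killed by $\der$). Granting this, one has $\der\hat\bb^{\2,1}=\der\ba^{\2,1}$ and $\der\hat\bb^{\2,2}=\der\ba^{\2,2}$, and a one-line computation using $\der(\der B)=0$ gives
\[
(\der\ba^{\2,1})_{sut}=\der B_{su}(i_1)\,\der B_{ut}(i_2),\qquad
(\der\ba^{\2,2})_{sut}=\der B_{su}(i_1)\,\der B_{ut}(i_2),
\]
so summing \eqref{eq:def-B2} yields $(\der\bb^{\2})_{sut}=2\,\der B_{su}(i_1)\,\der B_{ut}(i_2)$ --- which is off by a factor $2$, signalling that the correct bookkeeping is that $\hat\bb^{\2,1}$ and $\hat\bb^{\2,2}$ together reconstruct \emph{one} boundary term, each contributing ``half'' of it; I would therefore instead show directly that $\der\hat\bb^{\2,1}_{sut}+\der\hat\bb^{\2,2}_{sut}=\der B_{su}(i_1)\,\der B_{ut}(i_2)$ by splitting the simplices $\{u_1<u_2\}\cap[s,t]^2$ and $\{u_2<u_1\}\cap[s,t]^2$ along $u_i=u$ and recombining the resulting four pieces against the double Wiener integral representation of $\der B_{su}(i_1)\,\der B_{ut}(i_2)$. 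This is the computational heart of the argument, and the main obstacle: one must handle the Stratonovich diagonal terms (when $i_1=i_2$) carefully, but since $\der$ is linear and the It\^o--Stratonovich correction $V_{st}$ of Proposition~\ref{prop:ito-strato} is (a time integral, hence) an increment of a single-variable function, it lies in the kernel of $\der$ and drops out, so it suffices to argue for distinct indices and then invoke linearity and the correction formula.

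For the geometric relation \eqref{eq:geometric-B2}, the plan is shorter. I would expand $\bb^{\2}_{st}(i_1,i_2)+\bb^{\2}_{st}(i_2,i_1)$ using \eqref{eq:def-B2-1}--\eqref{eq:def-B2}: the term $\hat\bb^{\2,1}_{st}(i_1,i_2)$ integrates $[K(t,u_1)-K(s,u_1)]K(t,u_2)$ over $\{u_1<u_2\}\subset[s,t]^2$ against $dW_{u_1}(i_1)\,dW_{u_2}(i_2)$, while $\hat\bb^{\2,2}_{st}(i_2,i_1)$ integrates $-K(s,u_1)[K(t,u_2)-K(s,u_2)]$ over $\{u_2<u_1\}$ against $dW_{u_1}(i_2)\,dW_{u_2}(i_1)$; after relabelling the latter ($u_1\leftrightarrow u_2$) so that the Wiener differentials read $dW(i_1)\,dW(i_2)$ in the order $u_1<u_2$, the two integrands do not obviously combine, so I expect the right grouping is $\hat\bb^{\2,1}_{st}(i_1,i_2)+\hat\bb^{\2,1}_{st}(i_2,i_1)$ together with $\hat\bb^{\2,2}_{st}(i_1,i_2)+\hat\bb^{\2,2}_{st}(i_2,i_1)$, each pair being a symmetrization of an iterated Stratonovich integral over a simplex, hence by the geometric (shuffle) property of Stratonovich integrals equal to a product of two single integrals over $[s,t]$. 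Precisely, symmetrizing $\hat\bb^{\2,1}$ should give $\big(\int_s^t[K(t,u)-K(s,u)]\,dW_u(i_1)\big)\big(\int_s^t K(t,u)\,dW_u(i_2)\big)=\der B_{st}(i_1)\cdot B_t(i_2)$ --- wait, the second factor is $B_t(i_2)$, not $\der B_{st}(i_2)$ --- which again shows that it is only the \emph{sum} over the two values $j=1,2$ (i.e.\ $\hat\bb^{\2,1}+\hat\bb^{\2,2}$) whose symmetrization telescopes correctly: the spurious ``boundary at $s$'' contributions from $j=1$ and $j=2$ must cancel. So concretely I would write
\[
\bb^{\2}_{st}(i_1,i_2)+\bb^{\2}_{st}(i_2,i_1)
=\sum_{j=1}^{2}\Big(\hat\bb^{\2,j}_{st}(i_1,i_2)+\hat\bb^{\2,j}_{st}(i_2,i_1)\Big),
\]
apply the shuffle identity to each symmetrized simplex integral to turn it into a product $\big(\int_s^t\Phi_j^{(1)}\,dW(i_1)\big)\big(\int_s^t\Phi_j^{(2)}\,dW(i_2)\big)$ with $\Phi_j$ built from $K(t,\cdot)$ and $K(s,\cdot)$, and then check that the four resulting products sum to $\der B_{st}(i_1)\,\der B_{st}(i_2)=\bb^{\1}_{st}(i_1)\bb^{\1}_{st}(i_2)$ by a direct algebraic identity among the kernels (an application of $K(t,\cdot)=[K(t,\cdot)-K(s,\cdot)]+K(s,\cdot)$). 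The It\^o--Stratonovich correction again causes no trouble here, since for $i_1=i_2$ the shuffle/geometric property is exactly the feature that Stratonovich calculus preserves, and the correction terms are symmetric and accounted for automatically.

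In summary, the proof has two parallel halves, each reducing an algebraic identity for $\bb^{\2}$ to a manipulation of the kernel $K$ via the linearity of $\der$ and of stochastic integration, the shuffle property of Stratonovich integrals, the splitting of the square $[s,t]^2$ (resp.\ $[s,u]\cup[u,t]$) into sub-simplices, and the trivial observation that any increment of the form $g_t-g_s$ or any deterministic time-integral lies in $\cz\cac_2$ and hence disappears under $\der$ and does not spoil the shuffle symmetrization. I expect no deep difficulty, only careful bookkeeping; the one place to be vigilant is the $i_1=i_2$ diagonal, where one must cite Proposition~\ref{prop:ito-strato} to reduce to the off-diagonal case rather than attempting to manipulate Stratonovich integrals as if they were It\^o integrals.
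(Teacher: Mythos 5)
Your strategy for the multiplicative relation is, in the end, the paper's: after correctly discarding the idea that each $\hbb^{\2,j}$ is separately cohomologous to $\ba^{\2,j}$ (your factor-$2$ diagnosis is exactly right), you propose to compute $\der\hbb^{\2,1}_{sut}+\der\hbb^{\2,2}_{sut}$ directly and recombine the two half-planes $\{u_1<u_2\}$ and $\{u_1>u_2\}$ into $\R^2$, where the product kernel $\lc K(u,\cdot)-K(s,\cdot)\rc\otimes\lc K(t,\cdot)-K(u,\cdot)\rc$ factorizes into $\bb^{\1}_{su}(i_1)\,\bb^{\1}_{ut}(i_2)$. Two caveats. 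First, the mechanism is not a splitting of the integration domain ``along $u_i=u$'' (the variables range over $[0,t]$, not $[s,t]$); it is the telescoping of the kernels, $K(t,\cdot)-K(s,\cdot)=\lc K(t,\cdot)-K(u,\cdot)\rc+\lc K(u,\cdot)-K(s,\cdot)\rc$ and $K(t,\cdot)=K(u,\cdot)+\lc K(t,\cdot)-K(u,\cdot)\rc$, performed on a fixed domain. Second, your handling of the diagonal $i_1=i_2$ rests on a false claim: $V_{st}=\tfrac12\int_{u_1<u_2}\lc K(t,u_1)-K(s,u_1)\rc K(t,u_2)\,du_1du_2$ is \emph{not} the increment of a one-variable function (the same telescoping gives $\der V_{sut}=\tfrac12\int_{u_1<u_2}\lc K(u,u_1)-K(s,u_1)\rc\lc K(t,u_2)-K(u,u_2)\rc du_1du_2\neq 0$), so it does not ``drop out under $\der$.'' No reduction to distinct indices is needed: for Stratonovich integrals the full-plane identity $\int_{\R^2}f(u_1)g(u_2)\,dW_{u_1}(i)\,dW_{u_2}(i)=\big(\int f\,dW(i)\big)\big(\int g\,dW(i)\big)$ holds with equal indices (the two half-plane corrections from Proposition \ref{prop:ito-strato} add up to the It\^o product-formula bracket), so the computation is index-blind.

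The geometric half has a genuine gap. The pairing $\hbb^{\2,j}_{st}(i_1,i_2)+\hbb^{\2,j}_{st}(i_2,i_1)$ is \emph{not} a shuffle symmetrization: the shuffle identity permutes the kernels together with the Wiener indices, whereas swapping $(i_1,i_2)\mapsto(i_2,i_1)$ in (\ref{eq:def-B2-1}) leaves the kernel in place. Hence the $j=1$ pair does not equal a product of two single integrals (your own parenthetical ``wait'' detects the symptom), and ``applying the shuffle identity to each symmetrized simplex integral'' is not a valid step. The pairing that works is the cross one: relabelling $u_1\leftrightarrow u_2$ in $\hbb^{\2,2}_{st}(i_2,i_1)$ gives
\begin{equation*}
\hbb^{\2,1}_{st}(i_1,i_2)+\hbb^{\2,2}_{st}(i_2,i_1)=\int_{u_1<u_2}\lc K(t,u_1)-K(s,u_1)\rc\lc K(t,u_2)-K(s,u_2)\rc dW_{u_1}(i_1)\,dW_{u_2}(i_2),
\end{equation*}
and symmetrically $\hbb^{\2,2}_{st}(i_1,i_2)+\hbb^{\2,1}_{st}(i_2,i_1)$ gives the same integrand on $\{u_1>u_2\}$; summing yields the full-plane integral of $\der K_{st}\otimes\der K_{st}$, i.e. $\bb^{\1}_{st}(i_1)\,\bb^{\1}_{st}(i_2)$, again by the product identity above. (Your identity $K(t,\cdot)=\lc K(t,\cdot)-K(s,\cdot)\rc+K(s,\cdot)$ does do the job, but it must be applied across the two values of $j$, not within each $j$.) Finally, you do not address why it suffices to check the identities for a fixed triple $(s,u,t)$ rather than almost surely for all triples simultaneously; the paper first produces a continuous version of $(\bb^{\1},\bb^{\2})$ via Proposition \ref{prop:kolmogorov-B2} for exactly this purpose.
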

\begin{pf}
Recall that we are now dealing with a continuous version of $\bb^\mathbf{2}$.
In fact, one can easily modify the arguments of \cite{TU} in order to
get a continuous version of the pair $(\bb^\mathbf{1},\bb^\mathbf{2})$. This means that
it is enough to check relations (\ref{eq:multiplicative-B2}) and (\ref
{eq:geometric-B2}) for some fixed $0\le s<u<t\le T$.

Let us then verify (\ref{eq:multiplicative-B2}) for $s,u,t\in\ott$ such
that $s<u<t$. It is readily seen, by writing the definitions of $\bb
^{\mathbf{2}
,1}_{st}(i_1,i_2), \bb^{\mathbf{2},1}_{su}(i_1,i_2)$ and $\bb^{\mathbf{2}
,1}_{ut}(i_1,i_2)$, that
\begin{eqnarray*}
\der\bb^{\mathbf{2},1}_{sut}(i_1,i_2)&=&
\int_{u_1<u_2} [K(u,u_1)-K(s,u_1) ]\\
&&\hspace*{26.6pt}{}\times
[K(t,u_2)-K(u,u_2)]\,dW_{u_1}(i_1) \,dW_{u_2}(i_2),
\end{eqnarray*}
the right-hand side of this equality being well defined as a $L^2$
random variable (a~fact which can be shown similarly to Proposition
\ref{prop:bnd-second-moment-B2-st}). Along the same lines, we also get
\begin{eqnarray*}
\der\bb^{\mathbf{2},2}_{sut}(i_1,i_2)&=&
\int_{u_1>u_2} [K(u,u_1)-K(s,u_1) ]\\
&&\hspace*{26.3pt}{}\times[K(t,u_2)-K(u,u_2)]\,dW_{u_1}(i_1) \,dW_{u_2}(i_2),
\end{eqnarray*}
and thus
\begin{eqnarray*}
\der\bb^{\mathbf{2}}_{sut}(i_1,i_2)&=&\der\bb^{\mathbf{2},1}_{sut}(i_1,i_2)+\der
\bb
^{\mathbf{2},2}_{sut}(i_1,i_2) \\
&=&\int_{\R^2} [K(u,u_1)-K(s,u_1) ]\\
&&\hspace*{13pt}{}\times
[K(t,u_2)-K(u,u_2)]\,dW_{u_1}(i_1) \,dW_{u_2}(i_2) \\
&=& \bb^\mathbf{1}_{su}(i_1) \bb^\mathbf{1}_{ut}(i_2),
\end{eqnarray*}
which is relation (\ref{eq:multiplicative-B2}).

As far as relation (\ref{eq:geometric-B2}) is concerned, reorder the
integration indices in (\ref{eq:def-B2-1}) in order to get
\[
\hat\bb^{\mathbf{2},1}_{st}(i_2,i_1)=
\int_{u_2<u_1} K(t,u_1) [K(t,u_2)-K(s,u_2) ]\,dW_{u_1}(i_1)
\,dW_{u_2}(i_2).
\]
Add this expression to (\ref{eq:def-B2-2}), which yields
%
%
\begin{eqnarray}\label{eq:geom-1}
&&\hat\bb^{\mathbf{2},1}_{st}(i_2,i_1)+\hat\bb^{\mathbf{2},2}_{st}(i_1,i_2)\nonumber\\
&&\qquad=\int_{u_2<u_1} [K(t,u_1)-K(s,u_2) ]\\
&&\qquad\hspace*{37.4pt}{}\times[K(t,u_2)-K(s,u_2) ]
\,dW_{u_1}(i_1) \,dW_{u_2}(i_2).
\nonumber
\end{eqnarray}
Exactly in the same way, we get
%
%
\begin{eqnarray}\label{eq:geom-2}
&&\hat\bb^{\mathbf{2},1}_{st}(i_1,i_2)+\hat\bb^{\mathbf{2},2}_{st}(i_2,i_1)\nonumber\\
&&\qquad=\int_{u_1<u_2} [K(t,u_1)-K(s,u_2) ]\\
&&\qquad\quad\hspace*{26.3pt}{}\times[K(t,u_2)-K(s,u_2) ]
\,dW_{u_1}(i_1) \,dW_{u_2}(i_2).
\nonumber
\end{eqnarray}
Putting together equations (\ref{eq:geom-1}) and (\ref{eq:geom-2}), our
claim (\ref{eq:geometric-B2}) is now readily checked.
\end{pf}

Finally, let us close this section by giving the proof of the announced
regularity result on $\bb^\mathbf{2}$.
\begin{proposition}\label{prop:regularity-B2}
The increment $\bb^\mathbf{2}$ is almost surely an element of $\cac_2^{2\ga
}(\R
^{d^2})$, for any $\ga<H$.
\end{proposition}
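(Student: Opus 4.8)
The plan is to apply the Garsia-Rodemich-Rumsey type estimate of Lemma~\ref{GRR-2} to the increment $R=\bb^\2$, with $\kappa=2\ga$ and $l=d^2$. Since the components of $\bb^\2_{st}$ belong to the second chaos of $W$, all $L^{2p}(\Omega)$ norms are equivalent to the $L^2(\Omega)$ norm; hence for any $p\ge 1$ and any fixed $(u,v)\in\cs_{2,T}$ one has
\[
\be\lc |\bb^\2_{uv}|^{2p} \rc \lesssim \lp \be\lc |\bb^\2_{uv}|^{2} \rc\rp^{p} \lesssim |v-u|^{4Hp},
\]
where the last inequality is exactly Proposition~\ref{prop:bnd-second-moment-B2-st}. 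Therefore
\[
\be\lc \int_{\cs_{2,T}} \frac{|\bb^\2_{uv}|^{2p}}{|u-v|^{4\ga p+4}} \, du\, dv\rc
\lesssim \int_{\cs_{2,T}} |u-v|^{4Hp-4\ga p-4} \, du\, dv,
\]
and since $\ga<H$ we may choose $p$ large enough that $4(H-\ga)p-4>-1$, i.e.\ $4(H-\ga)p-4+1>0$, so the deterministic integral on the right converges. By Fubini the random integral is then almost surely finite.

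Next I would handle the $\der\bb^\2$ term appearing in Lemma~\ref{GRR-2}. Here the key observation is that $\bb^\2$ satisfies the multiplicative property (\ref{eq:multiplicative-B2}), so $\der\bb^\2_{sut}(i_1,i_2)=\bb^\1_{su}(i_1)\,\bb^\1_{ut}(i_2)=\der B_{su}(i_1)\,\der B_{ut}(i_2)$. Since $B$ is almost surely $\ga'$-H\"older for every $\ga'<H$, picking $\ga<\ga'<H$ gives
\[
|\der\bb^\2_{sut}| \lesssim |u-s|^{\ga'}\,|t-u|^{\ga'} \le |u-s|^{\ga}|t-u|^{\ga}\,(T)^{2(\ga'-\ga)},
\]
so that $\der\bb^\2\in\cac_3^{2\ga}(\R^{d,d})$ almost surely, with $\cn[\der\bb^\2;\cac_3^{2\ga}]<\infty$ a.s.\ (using the decomposition $h=h_1$ with $\rho_1=\ga$ in (\ref{eq:norm-C3})). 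Combining the two estimates, Lemma~\ref{GRR-2} yields $\bb^\2\in\cac_2^{2\ga}(\R^{d,d})$ almost surely.

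The one subtlety to address carefully is that the finiteness of the double integral was obtained in expectation, hence a.s.\ finiteness holds for each fixed $\ga<H$; to get the result simultaneously for all $\ga<H$ one applies this to a countable sequence $\ga_k\uparrow H$ and takes the intersection of the corresponding full-measure events. I expect the main (though still routine) obstacle to be bookkeeping the chaos-equivalence of moments together with the choice of $p$, and verifying that the hypotheses of Lemma~\ref{GRR-2}, in particular the required regularity $\der\bb^\2\in\cac_3^{2\ga}$, are genuinely met rather than merely plausible; the multiplicative relation already proved makes this last point essentially immediate, which is why the argument is short.
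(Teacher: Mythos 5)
Your proof is correct and follows essentially the same route as the paper's: both apply Lemma~\ref{GRR-2} with the same splitting into the double-integral term (controlled via equivalence of $L^p$ norms on a fixed chaos together with Proposition~\ref{prop:bnd-second-moment-B2-st}, choosing $p$ large enough) and the $\delta\bb^{\2}$ term (controlled via the multiplicative property and the $\ga$-H\"older continuity of $B$). The small variations --- taking the expectation of the integral directly instead of applying Jensen to its $2p$-th root, the intermediate exponent $\ga'$, and the explicit countable intersection over $\ga_k\uparrow H$ --- are cosmetic.
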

\begin{pf}
Consider a fixed H\"{o}lder exponent $\ga<H$.
The proof of this result is based on Lemma \ref{GRR-2},
which can be read here as $\cn[\bb^\mathbf{2};\cac_2^{2\ga}(\R
^{d^2})]\lesssim
A+D$, with
%
\[
A= \biggl(\int_{\cs_{2,T}} \frac{| \bb^{\mathbf{2}}_{uv} |^{2p}}{|u-v|^{4
\ga
p+4}} \,du \,dv \biggr)^{1/({2p})}
\quad\mbox{and}\quad
D= \cn[ \delta\bb^{\mathbf{2}}; \cac_3^{2\ga}( \R^{d^2}) ].
\]

Let us first deal with the term $D$ above: we have seen that $\bb^\mathbf{2}$
satisfies the multiplicative property (\ref{eq:multiplicative-B2}),
which can be summarized as $\delta\bb^{\mathbf{2}}=\der B\otimes\der B$.
Furthermore, $B\in\cac_1^\ga(\R^d)$ for any $\ga<H$, and thus, for any
$1\le i_1,i_2\le d$ and $0\le s <u <t \le T$
\[
|\delta\bb^{\mathbf{2}}_{sut}(i_1,i_2)|=|\der B_{su}(i_1)| |\der B_{ut}(i_2)|
\le\cn^{2}[B; \cac_1^\ga(\R^d)] |u-s|^{\ga} |t-u|^{\ga}.
\]
In other words, the quantity $\|\der\bb^{\mathbf{2}}\|_{\ga,\ga}$ defined by
(\ref{eq:norm-C22}) is almost surely finite, and according to
definition (\ref{eq:norm-C3}), we obtain that $D$ is also almost
surely finite.

We will now show that $A$ is finite almost surely when $p$ is large
enough, by proving that $\be[A]<\infty$. Indeed, invoking Jensen's
inequality we obtain
%
%
\begin{eqnarray}\label{eq:bnd-A}
\be[A ]&\le&
\biggl(\int_{\cs_{2,T}}\frac{\be[|\bb^{\mathbf{2}}_{uv}|^{2p}]}
{|u-v|^{4 \ga p+4}} \,du\,dv \biggr)^{1/({2p})}\nonumber\\[-8pt]\\[-8pt]
&\lesssim&
\biggl(\int_{\cs_{2,T}}\frac{\be^{p} [|\bb^{\mathbf{2}}_{uv}|^{2}]}
{|u-v|^{4 \ga p+4}} \,du\,dv \biggr)^{1/({2p})},\nonumber
\end{eqnarray}
where we have used the fact that $\bb^\mathbf{2}$ belongs to the second chaos
of $W$, on which all the $L^p$ norms are equivalent. On the other hand,
Proposition \ref{prop:bnd-second-moment-B2-st} gives $\be^{p} [
|\bb
^{\mathbf{2}}_{uv}|^{2}]\lesssim|u-v|^{4pH}$, and plugging this inequality
into (\ref{eq:bnd-A}), we obtain that $\be[A]$ is finite as long as
$p>1/(H-\ga)$.
\end{pf}

In conclusion, putting together the last two propositions, we have
constructed an element $\bb^\mathbf{2}$ which satisfies the properties
(i)--(iii) given at the beginning of the section, for any $H<1/2$.

\section{General case}
\label{sec:general-case}

The aim of this section is to prove Theorem \ref{thm:main-thm} in its
full generality. Recall that we define our substitute $\bb^{\bn}$ to
$n$th order integrals in the following way: for $2\le n\le\lfloor
1/H\rfloor$, any tuple $(i_1,\ldots,i_n)$ of elements of $\{1,\ldots
,d\}
$, $1\le j\le n$ and $(s,t)\in\cs_{2,T}$, set
%
%
\begin{eqnarray}\label{eq:def-hat-Bn-2}
&&\hbb_{st}^{\bn,j}(i_1,\ldots,i_n) \nonumber\\
&&\qquad=
(-1)^{j-1} \int_{A_{j}^{n}} \prod_{l=1}^{j-1} K(s,u_l) [
K(t,u_j)-K(s,u_j)]\\
&&\qquad\quad\hspace*{71.3pt}{}\times\prod_{l=j+1}^{n} K(t,u_l) \,dW_{u_1}(i_1) \cdots dW_{u_n}(i_n),
\nonumber
\end{eqnarray}
where the kernel $K$ is given by (\ref{eq:def-K}) and $A_{j}^{n}$ is
the subset of $[0,t]^n$ defined by
\begin{eqnarray*}
&&A_{j}^{n}=\{(u_1,\ldots,u_n)\in[0,t]^{n};\\
&&\hspace*{31.5pt} u_j=\min(u_1,\ldots
,u_n),
u_1>\cdots> u_{j-1}
\mbox{ and } u_{j+1}<\cdots<u_n\}.
\end{eqnarray*}
The 1-increment $\bb^{\bn}$ is then given by
%
%
\begin{equation}\label{eq:def-Bn-2}
\bb_{st}^{\bn}(i_1,\ldots,i_n)=\sum_{j=1}^{n-1} \hbb_{st}^{\bn
,j}(i_1,\ldots,i_n).
\end{equation}

It is obviously harder to reproduce the heuristic considerations
leading to this expression than in Section \ref{sec:heuristic-order-2}.
Let us just mention that the same kind of changes in the order of
integration allows us to produce some 1-increments similar to $\ba^{\mathbf{2}
,1},\ba^{\mathbf{2},2}$. Then the reordering trick yields some terms of the
form $\hbb_{st}^{\bn,j}$. After observing the form of several of these
terms, the general expression (\ref{eq:def-hat-Bn-2}) is then intuited
in a natural way.

\textit{Notation}: in order to write shorter formulas in the
computations below, we use the following conventions in the sequel,
whenever possible:

\begin{longlist}
\item
A product of kernels of the form $\prod_{j=1}^{n}K(\tau_j,u_j)$ will
simply be denoted by $\prod_{j=1}^{n}K_{\tau_j}$, meaning that the
variable $u_j$ has to be understood according to the position of the
kernel $K$ in the product.

\item
In the same context, we will also set $\der K_{st}$ for a quantity of
the form $K(t,u_j)-K(s,u_j)$.

\item
Furthermore, when all the $\tau_j$ are equal to the same instant $t$,
we write $\prod_{j=1}^{n}K(t,u_j)=K_t^{\otimes n}$.

\item
Finally, we will also shorten the notation for the increments of the
Wiener process $W$, and simply write $dW$ for $\prod_{j=1}^{n} dW_{u_j}(i_j)$.
\end{longlist}

All these conventions allow
us, for instance, to summarize formula (\ref{eq:def-hat-Bn-2}) into
%
%
\begin{equation}\label{eq:def-hat-Bn-2-summarized}
\hbb_{st}^{\bn,j}(i_1,\ldots,i_n) =
(-1)^{j-1} \int_{A_{j}^{n}} K_s^{\otimes(j-1)} \der K_{st}
K_t^{\otimes(n-j)} \,dW.
\end{equation}

\subsection{Moments of the $\mathbf{n}$th order integrals}

As in Section \ref{sec:prop-second-intg}, an important step of our
analysis is
a control of the second moment of $\bb^\bn$. This is given in the
following proposition.
\begin{proposition}\label{prop:moments-B-n}
For $n\leq\lfloor\frac{1}{H}\rfloor$, let $\mathbf{B}_{st}^{n}$ be
defined by (\ref{eq:def-Bn-2}). Then for $(s,t)\in\cs_{2,T}$, we have
\[
\be[ | \mathbf{B}_{st}^{n}| ^{2}] \leq C(t-s)^{2nH},
\]
for a strictly positive constant $C$.
\end{proposition}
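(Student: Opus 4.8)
The plan is to reduce the estimate on $\be[|\bb^\bn_{st}|^2]$ to the scalar Stratonovich moment bound of Lemma~\ref{lem1}, applied componentwise to each of the pieces $\hbb^{\bn,j}_{st}$, and then to control the resulting kernel integrals using the analytic bounds gathered in Section~\ref{sec:preliminaries}, in particular Lemma~\ref{lem:bnd-K-1} and Lemma~\ref{lem2}. Since $\bb^\bn_{st}=\sum_{j=1}^{n-1}\hbb^{\bn,j}_{st}$ is a finite sum, by the triangle inequality in $L^2(\Omega)$ it suffices to show $\be[(\hbb^{\bn,j}_{st}(i_1,\dots,i_n))^2]\lesssim (t-s)^{2nH}$ for each fixed $j$ and each multiindex. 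I would first treat the generic case where all the indices $i_1,\dots,i_n$ are distinct, so that the Stratonovich and Itô integrals coincide and $\be[(\hbb^{\bn,j}_{st})^2]$ is simply the $L^2(A_j^n)$ norm squared of the symmetrized kernel; the general case with coincidences among the $i_l$ is then handled by invoking Proposition~\ref{prop:ito-strato} to write $\hbb^{\bn,j}_{st}$ as a finite linear combination of lower-order Itô integrals $J_{st}(\nu)$ whose kernels are built from products of $K$'s and of the contraction kernels $\int_s^u\psi_v(m-1)\psi_v(m)\,dv$, and then bounding each $\be[J_{st}(\nu)^2]$ by the same type of iterated kernel integral — exactly as in Lemma~\ref{lem1}, except that here the integrand is not a product of a single function $\varphi$ but a product of the various $K(s,\cdot)$, $\der K_{st}$, $K(t,\cdot)$ factors, so I would need a mild extension of Lemma~\ref{lem1} allowing different $L^2$ weights at each integration level. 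That extension is routine: the same recurrence on $k$ goes through, producing a product of the $L^2$ norms of the successive factors.

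The heart of the matter is therefore the deterministic estimate
\[
\int_{A_j^n} \Big(K_s^{\otimes(j-1)}\Big)^2 \, \big(\der K_{st}\big)^2 \, \Big(K_t^{\otimes(n-j)}\Big)^2 \, du_1\cdots du_n \lesssim (t-s)^{2nH}.
\]
I would perform the integrations from the outside in, starting from the variables farthest from the minimum $u_j$. On the increasing block $u_{j+1}<\cdots<u_n$ (all in $[u_j,t]$) one integrates $\int_{u_{n-1}}^t K^2(t,u_n)\,du_n\lesssim (t-u_{n-1})^{2H}$ by Lemma~\ref{lem:bnd-K-1}, then $\int_{u_{n-2}}^t K^2(t,u_{n-1})(t-u_{n-1})^{2H}\,du_{n-1}$, and so on; each such step is controlled by Lemma~\ref{lem:bnd-K-1} together with the elementary fact that $\int_v^t K^2(t,w)(t-w)^{2mH}\,dw\lesssim (t-v)^{2(m+1)H}$ (again via the bound~(\ref{eq:bnd-kernel}) and $a^\al-b^\al\le(a-b)^\al$), leaving after the whole block a factor $\lesssim (t-u_j)^{2(n-j)H}$. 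Symmetrically, the decreasing block $u_1>\cdots>u_{j-1}$ (all in $[u_j,t]$ as well, since $u_j$ is the minimum) contributes, after integrating out $u_1,\dots,u_{j-1}$ against the weights $K(s,u_l)^2\lesssim (u_l-s)^{2H-1}\vee\text{(something integrable)}$, a factor of order $(t-u_j)^{\,\ast}$ or a constant; here one uses $\int K^2(s,w)\,dw\lesssim$ const on bounded intervals, carefully keeping track of powers. Finally one is left with the single integral in $u_j$ over $[0,t]$,
\[
\int_0^t \big[K(t,u_j)-K(s,u_j)\big]^2 \, (t-u_j)^{2(n-1)H}\, du_j,
\]
which, after splitting at $u_j=s$, using~(\ref{eq:bnd-delta-K}) on $[0,s]$ and $K(t,u_j)\1_{[0,t]}$ on $[s,t]$, and performing the changes of variable $v=s-u_j$, $y=v/(t-s)$ exactly as in the proof of Lemma~\ref{lem:bnd-K-2}, reduces precisely to $(t-s)^{2nH}$ times the finite constant $\sup_{A>0}\beta_A$ of Lemma~\ref{lem2} (with $k=n-1$, so the constraint $2kH<1$ is $n\le\lfloor 1/H\rfloor$, as required), plus an easier term.

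The main obstacle I anticipate is purely bookkeeping: organizing the iterated integration so that at every stage the running weight is a clean power $(t-u_i)^{2mH}$ (or an integrable negative power $u_i^{2H-1}$ coming from $K(s,u_i)$), and checking that the convergence conditions accumulated along the way all reduce to the single condition $2nH<2$, i.e. $n\le\lfloor 1/H\rfloor$. A secondary technical point is the precise form of the generalization of Lemma~\ref{lem1} needed when the $i_l$ coincide — one must verify that the contraction kernels produced by Proposition~\ref{prop:ito-strato}, namely $\int_s^u K(s\text{ or }t,v)^2\,dv$ type expressions (and mixed ones involving $\der K_{st}$), also obey bounds of the form const or $(t-s)^{2H}$, so that the recurrence closes with the same final power $(t-s)^{2nH}$. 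Once these two points are settled, assembling the componentwise bounds and summing over $j$ and over the finitely many multiindices yields $\be[|\bb^\bn_{st}|^2]\lesssim (t-s)^{2nH}$, which is the claim.
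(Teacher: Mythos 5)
Your overall architecture matches the paper's: reduce to a bound on each $\hbb^{\bn,j}_{st}$, integrate the squared kernel over $A_j^n$ from the outside in so that each variable gains a factor $(t-u)^{2H}$ or $(s-u)^{2H}$ relative to its neighbour, and finish with the change of variables $v=s-u_j$, $y=v/(t-s)$ and a Lemma~\ref{lem2}-type estimate. That deterministic computation is indeed the heart of the matter, and your sketch of it is essentially the paper's Step~1 (the $\vp^{(1)},\vp^{(2)}$ bounds). The genuine gap is in your treatment of coinciding indices. You reduce it to ``a mild extension of Lemma~\ref{lem1} allowing different $L^2$ weights at each integration level\dots{} producing a product of the $L^2$ norms of the successive factors''. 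That extension is true but useless here: the weights live on $[0,t]$ (or $[0,s]$), not on $[s,t]$, so $\int_0^t K(t,u)^2\,du=t^{2H}$ and $\int_0^s K(s,u)^2\,du=s^{2H}$ are constants of order $T^{2H}$, and only the single factor $\der K_{st}$ contributes $\int_0^t[\der K_{st}(u)]^2\,du=(t-s)^{2H}$. The product of $L^2$ norms therefore yields only $(t-s)^{2H}$, not $(t-s)^{2nH}$. The missing $(t-s)^{2(n-1)H}$ comes exclusively from the simplex ordering, and any recurrence that replaces $\be[J_{0u}(\nu')^2]$ by its supremum over $u$ --- which is what ``product of $L^2$ norms'' amounts to --- destroys exactly that. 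The paper's Steps 2--5 exist precisely to carry the $u$-dependence of $\be[J_{0u}(\nu')^2]$ through the It\^o recurrence, together with bounds such as $|G(u_b)|\le C(s-u_b)^{2(k-b)H}$ for the blocks of trace terms; this is the longest and most delicate part of the proof, not a routine modification of Lemma~\ref{lem1}. The same objection applies to your closing remark that the contraction kernels should obey ``bounds of the form const or $(t-s)^{2H}$'': what is needed is a bound in terms of the neighbouring integration variable, of the type $(s-u_{h-1})^{2H}$.

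A secondary slip: your final integral $\int_0^s\bigl[(s-u)^{H-\frac12}-(t-u)^{H-\frac12}\bigr]^2(t-u)^{2(n-1)H}\,du$ becomes, after the change of variables, a \emph{variant} of $\beta_A$ (difference squared, weight $(1+y)^{2(n-1)H}$) whose convergence at infinity requires $nH<1$; it is not Lemma~\ref{lem2} with $k=n-1$, whose hypothesis $2(n-1)H<1$ would wrongly exclude admissible pairs such as $H=0.3$, $n=3$. The genuine Lemma~\ref{lem2} (with $j=2k$) is what is needed for the trace terms in which two kernel factors collapse onto the minimal variable. Your plan does go through for a tuple of pairwise distinct indices, where the second moment is exactly the deterministic kernel integral; but as written the coinciding-index case --- which is where all the work in the paper lies --- is not established.
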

\begin{pf}
Thanks to decomposition (\ref{eq:def-Bn-2}), it suffices to show that
for any fixed family of indexes $i_{1},\ldots
,i_{n}\in\{1,\ldots,d\}$ and for any $1\leq j\leq n-1$, we have%
\[
\be[ | \hat{\mathbf{B}}_{st}^{\bn,j}(i_{1},\ldots
,i_{n})|
^{2}] \leq C(t-s)^{2nH}.
\]
Invoking now expression (\ref{eq:def-hat-Bn-2}) for $\hbb^{\bn,j}$ and
decomposing the integral over the region $A_{j}$ appearing in the
definition of $\hat{\mathbf{B}}_{st}^{\bn,j}(i_{1},\ldots,i_{n})$ into
sums of
integrals over the simplex by means of Fubini's theorem, it suffices to
show an inequality of the type
%
%
\begin{equation}\label{eq:def-Q}
\be[(Q_{st})^2]\le C(t-s)^{2nH}
\qquad\mbox{with }
Q_{st}=
\int_{0<u_{1}<\cdots<u_{n}<t}
\der K_{st}
\prod_{i=2}^{n}K_{\tau_{i}} \,dW.\hspace*{-28pt}
\end{equation}
Notice that in the expression above, we made use of the
notation introduced at the beginning of the current section, and for
$i=1,\ldots,n$, we
assume $\tau_i=s$ or $t$. We concentrate our efforts now in proving
(\ref{eq:def-Q}).

Let us further decompose $Q$ into $Q=Q^{1}+Q^{2}$, where%
%
%
\begin{eqnarray}\label{eq:def-Q1-Q2}
Q^{1}_{st}&=&\int_{s<u_{1}<\cdots<u_{n}<t}
K_{t}^{\otimes n} \,dW\quad
\mbox{and}\nonumber\\[-8pt]\\[-8pt]
Q^{2}_{st}&=&\int_{0<u_{1}<\cdots<u_{n}<t,u_{1}<s}
\der K_{st}
\prod_{i=2}^{n}K_{\tau_{i}} \,dW ,\nonumber
\end{eqnarray}
as in the proof of Lemma \ref{lem:bnd-K-2}.
Notice that in $Q^{1}_{st}$ we have assumed that $\tau_{i}=t$ for
all~$i$, since otherwise this term vanishes. Moreover, the term
$Q^{1}_{st}$ can be handled using the
properties of the multiple Stratonovich integrals established in
Lem\-ma~\ref{lem1}, and applying the estimate obtained in Lemma \ref
{lem:bnd-K-1}. This yields easily the relation $\be
[(Q_{st}^{1})^2]\lesssim(t-s)^{2nH}$.

Concerning $Q^{2}_{st}$, one can write $Q^{2}_{st}=\sum_{j=1}^{n}B_{st}^{j}$
where%
\[
B_{st}^{j}=\int_{0<u_{1}<\cdots<u_{j}<s<u_{j+1}<\cdots<u_{n}<t}
\der K_{st}
\prod_{i=2}^{n}K_{\tau_{i}} \,dW .
\]
Notice that in the above equation $\tau_{i}=t$ if $i=j+1,\ldots,n$,
since we have again $B_{st}^{j}=0$ otherwise. Each term $B_{st}^{j}$
can thus be written as the
product of two factors: $B_{st}^{j}=C_{st}^{j}D_{st}^{j}$, where
for $j\ge2$
\[
C_{st}^{j}=\int_{0<u_{1}<\cdots<u_{j}<s}
\der K_{st}
\prod_{i=2}^{j}K_{\tau_{i}} \,dW
\]
and
\[
D_{st}^{j}=\int_{s<u_{j+1}<\cdots<u_{n}<t}
K_t^{\otimes(n-j)} \,dW,
\]
%
and for $j=1$, $C_{st}^{1}=\int_0^s \der K_{st} \,dW$ and $D_{st}^{1}$ is
given by the above formula.

The random\vspace*{1pt} variables $C_{st}^{j}$ and $D_{st}^{j}$ are independent, and
$\be[(D_{st}^{j})^{2}]$ can be bounded easily like $\be[(Q^1_{st})^2]$.
Hence we obtain
%
%
\begin{equation}\label{eq:bnd-B-st-j}
\be[(B_{st}^{j})^{2}]=\be[(C_{st}^{j})^{2}]\be[
(D_{st}^{j})^{2}]
\leq C\be[(C_{st}^{j})^{2}](t-s)^{2(n-j)H}.
\end{equation}

In order to bound the second moment of $C_{st}^{j}$, we express this
factor as a sum of It\^{o} integrals by means of Proposition \ref
{prop:ito-strato}. To do this, we give up for a moment our
convention on products of increments, and we define, for
$u\in[0,s]$ and $l=2,\ldots,j$, the processes
\[
Y_{u}(1)=\int_{0}^{u}[
K(t,v)-K(s,v)] \,dW_{v}(i_{1})
\quad\mbox{and}\quad
Y_{u}(l)=\int_{0}^{u}
K(\tau_{l},v)\,dW_{v}(i_{l}).
\]
%
Then, the processes $\{Y_{u}(l); 0\le u \le s\}$ are Gaussian
martingales and
\[
C_{st}^j= \int_{0<u_{1}<\cdots<u_{j}<s} dY_{u_1} (1) \,dY_{u_1}
(2)\cdots dY_{u_l} (l).
\]
Thus, a direct application of Proposition \ref{prop:ito-strato} yields
\[
C_{st}^{j}=\sum_{k=\lfloor j/2\rfloor}^{j}\frac
{1}{2^{j-k}}\sum_{\nu\in
D_{j}^{k}}J_{0s}(\nu),
\]
where
\[
J_{0s}(\nu)=\int_{0<u_{1}<\cdots<u_{k}<s}\partial Z_{u_{1}}(1)\cdots
\partial Z_{u_{k}}(k),
\]
%
for $\nu=(j_{1},\ldots,j_{k})$. Thus, setting $\sum
_{l=1}^{h}j_{l}=m(h)$, we have $Z(h)=Y(i_{m(h)})$ if $j_{h}=1$, and
$Z_{u}(h)=\langle Y(m(h)-1),Y(m(h))\rangle_{u}$ if
$j_{h}=2$ and
$i_{m(h)-1}=i_{m(h)}$, where $\langle\cdot,\cdot\rangle$ designates the
bracket of two continuous
martingales. We are going to
estimate $\be[J_{0s}(\nu)^{2}]$ using a
recursive argument.
This will be done in several steps.

\textit{Step} 1: \textit{suppose $j_{k},j_{k-1},\ldots,j_{1}=2$.} Then $j=2k$,
and we can assume that $i_m=i_{m-1}$ for $m=2,4,\ldots, 2k$, otherwise
$J_{0s}(\nu)=0$. The term $J_{0s}(\nu)$ is deterministic and it can
be expressed as follows:
\begin{eqnarray*}
J_{0s}(\nu)&=&
\int_{0<u_{1}<\cdots<u_{k}<s} [ K(t,u_{1})-K(s,u_{1})%
] K(\tau_{2},u_{1})\\
&&\hspace*{61pt}{}\times\prod_{h=2}^{k}K(\tau_{2h-1},u_{h})K(\tau_{2h},u_{h})
\,du_{1}\cdots du_{k}.
\end{eqnarray*}
As a consequence, owing to (\ref{eq:bnd-kernel}) and (\ref
{eq:bnd-delta-K}), we have
%
%
\begin{equation}\label{eq:bnd-J-0s-nu}
| J_{0s}(\nu)| \leq C\int_{0<u_{1}<\cdots<u_{k}<s}
\vp^{(1)}_{u_1} \prod_{h=2}^{k} \vp^{(2)}_{u_h} \,du_{1}\cdots du_{k},
\end{equation}
where
%
\[
\vp^{(1)}_{u_1} = [
(s-u_{1})^{H-{1/2}}-(t-u_1)^{H-{1/2}}] [
(s-u_{1})^{H-%
{1/2}}+u_{1}^{H-{1/2}}]
\]
and
\[
\vp^{(2)}_{u_h} = [ (s-u_{h})^{H-{1/2}}+u_{h}^{H-
{1/2}%
}] ^{2}.
\]
Moreover, the integral of $\prod_{h=2}^{k} \vp^{(2)}_{u_h}$ is easily
bounded: indeed, we have
\begin{eqnarray*}
&&\int_{u_{1}<u_2<\cdots<u_{k}<s}\prod_{h=2}^{k} \vp^{(2)}_{u_h}
\,du_{2}\cdots
du_{k}\\
&&\qquad\le\int_{u_{1}<u_2<\cdots<u_{k}<s}\prod_{h=2}^{k}[
(s-u_{h})^{H-{1%
}/{2}}+( u_{h}-u_{1}) ^{H-{1/2}}]
^{2}\,du_{2}\cdots
du_{k}\\
&&\qquad\le C \int_{ [u_1,s]^{k-1} }\prod_{h=2}^{k}[ (s-u_{h})^{
2H-1}+( u_{h}-u_{ 1}) ^{2H-1}] \,du_{2}\cdots
du_{k} \\
&&\qquad\le C(s-u_{1})^{2(k-1)H},
\end{eqnarray*}
with the convention $u_{k+1}=s$.
Therefore, plugging this inequality into (\ref{eq:bnd-J-0s-nu}) and
making the change of variables $s-u_1=v$ and $y=\frac v{t-s}$, we get
%
\begin{eqnarray*}
| J_{0s}(\nu)| &\leq& C\int_{0}^{s}[
(s-u_{1})^{H-{1/2}}-(t-u_{1})^{H-{1/2}}]
[ (s-u_{1})^{H-{1/2}}+u_{1}^{H-{1/2}}]\\
&&\hspace*{21.5pt}{}\times
(s-u_{1})^{2(k-1)H}\,du_{1} \\
&=&C\int_{0}^{s}[ v^{H-{1/2}}-(t-s+v)^{H-{1/2}}]
[ v^{H-{1/2}}+(s-v)^{H-{1/2}}]\\
&&\hspace*{21.5pt}{}\times v^{2(k-1)H}\,dv
\\
&=&C(t-s)^{2kH}\int_{0}^{s/(t-s)}[ y^{H-{1/2}}-(1+y)^{H-
{1/2%
}}]\\
&&\hspace*{91.7pt}{}\times
\biggl[ y^{H-{1/2}}+\biggl(\frac{s}{t-s}-y\biggr)^{H-{1/2}}\biggr]
y^{2(k-1)H}\,dy.
\end{eqnarray*}
We are now in a position to use Lemma \ref{lem2} with $A=s/(t-s)$, and
we obtain
%
%
\begin{equation} \label{eq:bnd-J-0s-nu-2}
| J_{0s}(\nu)| \le C(t-s)^{2kH},
\end{equation}
which implies that $J_{0s}(\nu)^2 \le C(t-s)^{2jH}$, owing to the
fact that $2k=j$.

\textit{Step} 2: \textit{suppose that $j_{k}=1$.} Then Proposition \ref
{prop:ito-strato} gives
\[
J_{0s}(\nu)=\int_{0<u_{1}<\cdots<u_{k}<s}\partial Z_{u_{1}}(1)\cdots
\partial Z_{u_{k-1}}(k-1)K(\tau_{j},u_{k})\,
\partial W_{u}(i_{j})
\]
and%
\begin{eqnarray*}
\be[J_{0s}(\nu)^{2}]&=&\int_{0}^{s}
\be( J_{0u}(\nu^{\prime})^{2})
K(\tau_{j},u)^{2}\,du \\
&\le& \int_{0}^{s}
\be( J_{0u}(\nu^{\prime})^{2})
\bigl( (s-u)^{2H-1} + u^{2H-1} \bigr)\,du,
\end{eqnarray*}
with $\nu^{\prime}=(j_{1},\ldots,j_{k-1})$. This relation allows us
to set an induction procedure,
as we shall see later.

\textit{Step} 3: \textit{suppose that $j_{k},j_{k-1},\ldots,j_{b+1}=2$ and
$j_b=1$, where $b\ge2$}.
We assume that $i_{m(h)} = i_{m(h)-1}$ for $h=b+1 ,\ldots, k$. Here
again, Proposition \ref{prop:ito-strato} implies
\begin{eqnarray*}
J_{0s}(\nu) &=&\int_{0<u_{1}<\cdots<u_{k}<s}\partial
Z_{u_{1}}(1)\cdots
\partial Z_{u_{b}}(b)\\
&&\hspace*{62.6pt}{}\times
\prod_{h=b+1}^{k}
K\bigl(\tau_{m(h)-1},u_{h}\bigr)K\bigl(\tau
_{m(h)},u_{h}\bigr) \,du_{1}\cdots du_{k},
\end{eqnarray*}
and Fubini's theorem yields
\[
J_{0s}(\nu) = \int_0^s J_{0u _b}(\nu')
K\bigl(\tau_{m(h)}, u_b\bigr) G(u_b) \,dW_{u_b}\bigl(i_{m(h)}\bigr),
\]
with $\nu'=(j_1, \ldots, j_{b-1})$, and where
\begin{eqnarray*}
G(u_b)&=&\int_{0<u_{b}< u_{b+1} <\cdots<u_{k}<s}
\prod_{h=b+1}^{k}
K\bigl(\tau_{m(h)-1},u_{h}\bigr)\\
&&\hspace*{114.7pt}{}\times K\bigl(\tau^{m(h)},u_{h}\bigr)\,du_{b+1}\cdots du_{k}.
\end{eqnarray*}
As for the previous bound (\ref{eq:bnd-J-0s-nu-2}) we obtain
\[
|G(u_b)| \le C (s-u_b)^{2(k-b)H}.
\]
Therefore
\begin{eqnarray*}
\be[J_{0s}(\nu)^2]&=& \int_0^s \be[J_{0u_b}(\nu')^2]
K\bigl(\tau_{m(h)}, u_b\bigr)^2 G(u_b)^2
\,du_b \\
&\le& C\int_0^s \be[J_{0u_b}(\nu'')^2]
[ (s-u_{b})^{2H-1%
}+u_{b}^{2H-1}] (s-u_b)^{4(k-b)H}
\,du_b .
\end{eqnarray*}
Notice that the above inequality includes the inequality obtained in
Step 2, which corresponds to the case $b=k$.

\textit{Step} 4: \textit{suppose that $j_{k},j_{k-1},\ldots,j_{b+1}=2$, $j_b=1$,
$j_{b-1}, j_{b-2}, \ldots, j_{c+1}=2$ and $j_c=1$, where $2\le c\le
b$. }
We assume also that $i_{m(h)} = i_{m(h)-1}$ for $h=c+1, \ldots, b-1,
b+1 ,\ldots, k$.
By the same arguments as in Step 2 we obtain
\begin{eqnarray*}
\be[J_{0s}(\nu)^2]
&\le& C\int_{0<u_c<u_b<s} \be[J_{0u_c}(\nu')^2]
[ (u_b-u_{c})^{2H-1%
}+u_{c}^{2H-1}]\\
&&\hspace*{58.3pt}{}\times (u_b-u_c)^{4(b-c)H}
[ (s-u_{b})^{2H-1%
}+u_{b}^{2H-1}]\\
&&\hspace*{58.3pt}{}\times (s-u_b)^{4(k-b)H}
\,du_c \,du_b ,
\end{eqnarray*}
with $\nu'=(j_1, \ldots, j_{c-1})$.
Replacing $u_{b}^{2H-1}$ by $(u_{b}-u_c)^{2H-1}$ and integrating with
respect to $u_b$ yields
\begin{eqnarray*}
\be[J_{0s}(\nu)^2]
&\le& C\int_0^s \be[J_{0u_c}(\nu')^2]
[ (s-u_{c})^{2H-1%
}+u_{c}^{2H-1}]\\
&&\hspace*{22.3pt}{}\times (s-u_c)^{4(k-c)H+2H} \,du_c .
\end{eqnarray*}

\textit{Step} 5: \textit{iteration scheme}. Iterating the argument in
Step 4, we
reduce the size of $\nu'$ until we obtain a multiindex of length $r$
such that $\nu'=(1,2,\ldots,2)$ or $\nu'=(2,2,\ldots,2)$, with
$j_{r+1}=1$, and we obtain an estimate of the form
%
%
\begin{eqnarray} \label{e2}
\be[J_{0s}(\nu)^2]
&\le& C\int_0^s \be[J_{0u }(\nu')^2]
[ (s-u )^{2H-1%
}+u ^{2H-1}]\nonumber\\[-8pt]\\[-8pt]
&&\hspace*{21.5pt}{}\times
(s-u )^{2H \sum_{ l=r+2}^k j_l } \,du.\nonumber
\end{eqnarray}
Suppose first that $\nu'=(1,2,\ldots,2)$. Then,
\begin{eqnarray*}
J_{0s}(\nu' ) &=& \int_{0<u_{1}<\cdots<u_{r}<u}
[K(t,u_1)-K(s,u_1)] \\
&&\hspace*{64pt}{}\times\prod_{h=2}^r K\bigl(\tau_{m(h)-1}, u_h\bigr)
K\bigl(\tau_{m(h)}, u_h\bigr) \,dW_{u_1}(i_1) \,du_2 \cdots du_r,
\end{eqnarray*}
and by Fubini's theorem
\[
J_{0s}(\nu' ) = \int_0^u
[K(t,u _1)-K(s,u_1 )] F(u_1) \,dW_{u_1}(i_1) ,
\]
where
\[
F(u_1)= \int_{u_{1}<u_2<\cdots<u_{r}<u}\prod_{h=2}^r K\bigl(\tau
_{m(h)-1}, u_h\bigr)
K\bigl(\tau_{m(h)}, u_h\bigr) \,du_2 \cdots du_r.
\]
As in the proof of (\ref{eq:bnd-J-0s-nu-2}) we get
\[
| F(u_1)| \le C(u-u_1) ^{2(r-1)H}.
\]
Therefore,
%
%
\begin{eqnarray} \label{e1a}
\be[J_{0s}(\nu' )^2]&\le& C \int_0^u
[(t-u_1)^{H-1/2} - (s-u_1)^{H-1/2}]^2\nonumber\\[-8pt]\\[-8pt]
&&\hspace*{22.4pt}{}\times (u-u_1) ^{ 4(r-1)H} \,du_1.\nonumber
\end{eqnarray}
Substituting (\ref{e1a}) into (\ref{e2}) yields, after integrating in
the variable $u$,
\[
\be[J_{0s}(\nu)^2]\le C\int_0^s [(t-u)^{H-1/2} -(s-u)
^{H-1/2}]^2 (s-u)^{2(j-1)H} \,du.
\]
Performing the changes of variables $v=s-u$ and $y=v/(t-s)$, we end up with
%
%
\begin{eqnarray}\label{eq:bnd-J-0s-nu-3}\qquad
\be[J_{0s}(\nu)^2]&\le& C(t-s)^{2jH}\int_0^{s/(t-s)}
[(1+t)^{H-1/2} -y ^{H-1/2}]^2 y^{2(j-1)H} \,dy \nonumber\\[-8pt]\\[-8pt]
&\le& C(t-s)^{2jH},\nonumber
\end{eqnarray}
where the last step is obtained thanks to a slight variation of Lemma
\ref{lem2}.

If $\nu'=(2,2,\ldots,2)$, then we proceed as in Step 1 and we obtain
%
%
\begin{eqnarray} \label{e3a}
| J_{0u}(\nu' ) | &\le& C \int_0^u [(s-u_1)^{H-1/2}
-(t-u_1) ^{H-1/2}]\nonumber\\
&&\hspace*{22.4pt}{}\times
[(u-u_1) ^{H-1/2} +u_1^{H-1/2} ]\\
&&\hspace*{22.4pt}{}\times (u-u_1)^{2(r-1)H}\,du_1.\nonumber
\end{eqnarray}
Substituting (\ref{e3a}) into (\ref{e2}), integrating first in the
variable $u$ and using the same arguments as in Step 1 we obtain also
the estimate
%
%
\begin{equation}\label{e5}
\be[J_{0s}(\nu)^2]\le C(t-s)^{2jH}.
\end{equation}

\textit{Step} 6: \textit{conclusion}. Our bounds (\ref
{eq:bnd-J-0s-nu-3}) and
(\ref{e5}) on $J_{0s}(\nu)$ yield the same kind of estimate for the
term $C_{st}^{j}$. Thus relation (\ref{eq:bnd-B-st-j}) gives
$B_{st}^{j}\lesssim(t-s)^{2nH}$. This estimate can now be plugged into
the definition (\ref{eq:def-Q1-Q2}) of $Q^2$, then in the definition of
$Q$, which leads to our claim (\ref{eq:def-Q}). The proof is now complete.
\end{pf}

\subsection{\texorpdfstring{Proof of Theorem \protect\ref{thm:main-thm}}{Proof of Theorem 1.1.}}

Before we prove our main theorem, we need a last elementary technical
ingredient, which relies on the notational
convention given at the beginning of the current section.
\begin{lemma}\label{lem:der-prod-K}
For $n\ge3$, $j=2,\ldots,n-1$ and $0\le s<t\le T$, set
\[
M_{st}^{n,j}=K_{s}^{\otimes(j-1)} \der K_{st} K_{t}^{\otimes(n-j)}.
\]
Recall that for an element $M\in\cac_2$, $\der M$ is defined by (\ref
{eq:simple_application}). Then
\begin{eqnarray*}
\der M_{sut}^{n,j}&=&
-\sum_{m=1}^{j-1} K_{s}^{\otimes(m-1)} \der K_{su}
K_{u}^{\otimes
(j-1-m)} \der K_{ut} K_{t}^{\otimes(n-j)} \\
&&{}+K_{s}^{\otimes(j-1)} \der K_{su} \sum_{m=1}^{n-j}
K_{u}^{\otimes
(m-1)} \der K_{ut} K_{t}^{\otimes(n-j-m)}.
\end{eqnarray*}
The relation still holds true for $j\in\{1,n\}$ and $n=2$, with the
convention $K^{\otimes0}=\mathbf{1}$ and $\der K^{\otimes0}=0$.
\end{lemma}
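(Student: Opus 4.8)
The plan is to expand $\der M^{n,j}_{sut}=M^{n,j}_{st}-M^{n,j}_{su}-M^{n,j}_{ut}$ straight from the definition (\ref{eq:simple_application}) and then to reorganize the result by means of a discrete Leibniz (telescoping) identity for finite products.

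The only algebraic ingredient I would isolate is the following: for any reals $x_1,\dots,x_N,y_1,\dots,y_N$,
\[
\prod_{l=1}^{N} x_l - \prod_{l=1}^{N} y_l
= \sum_{m=1}^{N} \Big(\prod_{l=1}^{m-1} y_l\Big)\,(x_m-y_m)\,\Big(\prod_{l=m+1}^{N} x_l\Big),
\]
which holds since the $m$-th summand equals $\big(\prod_{l\le m-1}y_l\big)\big(\prod_{l\ge m}x_l\big)-\big(\prod_{l\le m}y_l\big)\big(\prod_{l\ge m+1}x_l\big)$ and the sum telescopes. Using the shorthand of this section, applying it with $(x_l,y_l)=(K(u,u_l),K(s,u_l))$ for $l=1,\dots,j-1$ yields
\[
K_u^{\otimes(j-1)}-K_s^{\otimes(j-1)}=\sum_{m=1}^{j-1}K_s^{\otimes(m-1)}\,\der K_{su}\,K_u^{\otimes(j-1-m)},
\]
and applying it with $(x_l,y_l)=(K(t,u_l),K(u,u_l))$ for $l=j+1,\dots,n$ yields
\[
K_t^{\otimes(n-j)}-K_u^{\otimes(n-j)}=\sum_{m=1}^{n-j}K_u^{\otimes(m-1)}\,\der K_{ut}\,K_t^{\otimes(n-j-m)}.
\]

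Next I would write the three terms of $\der M^{n,j}_{sut}$ in compressed form: abbreviate the left block by $a=K_s^{\otimes(j-1)}$, $a'=K_u^{\otimes(j-1)}$, the right block by $b=K_t^{\otimes(n-j)}$, $b'=K_u^{\otimes(n-j)}$, and the middle factor at the variable $u_j$ by $p=K(t,u_j)$, $q=K(s,u_j)$, $r=K(u,u_j)$, so that $M^{n,j}_{st}=a(p-q)b$, $M^{n,j}_{su}=a(r-q)b'$ and $M^{n,j}_{ut}=a'(p-r)b$. Splitting the middle increment as $p-q=(p-r)+(r-q)$ and regrouping gives
\begin{align*}
\der M^{n,j}_{sut}&=a(p-q)b-a(r-q)b'-a'(p-r)b\\
&=-(a'-a)\,(p-r)\,b+a\,(r-q)\,(b-b').
\end{align*}
Since $p-r=\der K_{ut}$ and $r-q=\der K_{su}$ at the variable $u_j$, inserting the two telescoping identities above for $a'-a$ and for $b-b'$ reproduces exactly the two sums in the statement, the first one carrying the overall minus sign.

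For the boundary indices $j\in\{1,n\}$ and the case $n=2$, the conventions $K^{\otimes 0}=\1$ and $\der K^{\otimes 0}=0$ force $a=a'$ when $j=1$ and $b=b'$ when $j=n$, so that one of the two sums is empty and the computation above still applies verbatim. I do not anticipate any real obstacle here: the lemma is a purely algebraic manipulation, and the only point that needs care is the ordering in the telescoping identity, namely keeping the $s$-kernels to the left and the $u$- (resp.\ $t$-) kernels to the right of the split index, so that the orders of the products, the ranges of the index $m$, and the sign come out precisely as claimed.
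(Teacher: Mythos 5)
Your proof is correct and follows essentially the same route as the paper's: the identity $\der N_{sut}=-\der a_{su}\,\der b_{ut}\,c_t+a_s\,\der b_{su}\,\der c_{ut}$ for $N_{st}=a_s\,\der b_{st}\,c_t$ (which is exactly your regrouping after splitting $p-q=(p-r)+(r-q)$), combined with the telescoping identity $[\der K^{\otimes l}]_{st}=\sum_{p=1}^{l}K_s^{\otimes(p-1)}\,\der K_{st}\,K_t^{\otimes(l-p)}$. No issues.
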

\begin{pf}
This proof is completely elementary, and included here for
the sake of completeness, since it uses heavily the notation of Section
\ref{sec:algebraic-vocabulary}.

First, if $a,b,c$ are 3 increments in $\cac_1$, and if we define $N\in
\cac_2$ by $N_{st}=a_s \der b_{st} c_{t}$, then a simple
application of
Definition (\ref{eq:simple_application}) gives
\[
\der N_{sut}= - \der a_{su} \der b_{ut} c_{t} + a_{s} \der
b_{su} \der c_{ut}.
\]
Our claim is thus proved by applying this relation to $a=K^{\otimes
(j-1)}$, $b=K$, $c=K^{\otimes(n-j)}$, and observing that $[\der
K^{\otimes l}]_{st}= \sum_{p=1}^{l} K_{s}^{\otimes(p-1)} \der
K_{st} K_{t}^{\otimes(l-p)}$.
\end{pf}
\begin{pf*}{Proof of Theorem \ref{thm:main-thm}}
The structure of the proof is the same as in the second order case of
Section \ref{sec:prop-second-intg}: we first reduce the algebraic
relations (\ref{eq:multiplicativity}) and (\ref{eq:geom-rough-path}) to
the case of some fixed $s,u,t$ by standard considerations. Then we
first focus on~(\ref{eq:multiplicativity}).

\textit{Step} 1: \textit{proof of the multiplicative property} (\ref
{eq:multiplicativity}). Fix
$(s,u,t)\in\cs_{3,T} $. Recall that $\hbb_{st}^{\bn,j}$ is defined by
(\ref{eq:def-hat-Bn-2-summarized}). Therefore, invoking Lemma
\ref{lem:der-prod-K}, $\der\hbb^{\bn,j}$ is given by
%
%
\begin{eqnarray}\label{eq:der-Bn-j}\quad
&&\der\hbb_{sut}^{\bn,j}(i_1,\ldots,i_n)\nonumber\\
&&\qquad=
(-1)^{j} \int_{A_{j}^{n}} \sum_{m=1}^{j-1} K_{s}^{\otimes(m-1)}
\der
K_{su} K_{u}^{\otimes(j-1-m)} \der K_{ut} K_{t}^{\otimes
(n-j)} \,dW \\
&&\qquad\quad{}+(-1)^{j-1} \int_{A_{j}^{n}} K_{s}^{\otimes(j-1)} \der K_{su} \sum
_{m=1}^{n-j} K_{u}^{\otimes(m-1)} \der K_{ut} K_{t}^{\otimes
(n-j-m)} \,dW.
\nonumber
\end{eqnarray}

On the other hand, set
$Z_{sut}=\sum_{n_{1}=1}^{n-1}\mathbf{B}_{su}^{\mathbf{n_{1}}}\mathbf
{B}_{ut}^{\mathbf{n-n_1}}$. One can easily check that
%
%
\begin{eqnarray}\label{eq:def-Z-sut}
Z_{sut}
&=&\sum_{n_{1}=1}^{n-1}\sum_{k=1}^{n_{1}}\sum_{h=1}^{n-n_{1}}\hat
{\mathbf{B%
}}_{su}^{\mathbf{n_{1}},k}\hat{\mathbf{B}}_{ut}^{\mathbf
{n-n_{1}},h}\nonumber\\
&=&\sum_{n_{1}=1}^{n-1}\sum_{k=1}^{n_{1}}%
\sum_{h=1}^{n-n_{1}}(-1)^{k+h}
\int_{A_{k,h}(n_1)} K_{s}^{\otimes(k-1)} \der K_{su}K_{u}^{\otimes(n_1-k+h-1)} \der K_{ut}\\
&&\hspace*{131.4pt}{}\times  K_{t}^{\otimes(n-n_1-h)}
\,dW, \nonumber
\end{eqnarray}
%
where $A_{k,h}(n_1)$ is the set defined by
\begin{eqnarray*}
A_{k,h}(n_1) &=& A_k^{n_1}\times A_h^{n-n_1}\\
&=&\{(u_{1},\ldots,u_{n});
u_{k}<u_{k+1}<\cdots<u_{n_{1}}, u_{k}<u_{k-1}<\cdots<u_{1}, \\
&&\hspace*{5.1pt}u_{n_1+h} <u_{n_1+h+1}<\cdots<u_{n}, u_{n_1+h}<u_{n_1+h-1}<\cdots
<u_{n_{1}+1}\}.
\end{eqnarray*}
We want to show that (\ref{eq:def-Z-sut}) and (\ref{eq:der-Bn-j}) coincide.

In order to follow the computations below, it might be useful to keep
in mind an illustration of the
coordinate ordering on a set of the form $A_{k,h}(m)$, for which an
example is provided at Figure \ref{Fig1} (note that the ordering
between $u_m$ and $u_{m+1}$ is not specified).

%
\begin{figure}[b]

\includegraphics{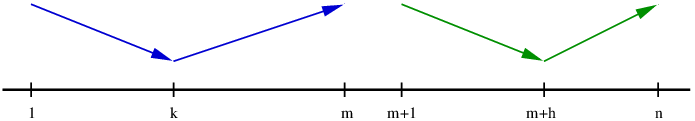}

\caption{Coordinates ordering on $A_{k,h}(m)$.}
\label{Fig1}
\end{figure}

Notice that on the set $A_{k,h}(n_1)\cap\{u_{k}<u_{n_1+h}\}$ the
minimum of the
coordinates is $u_{k}$, and on the set $A_{k,h}(n_1)\cap\{
u_{n_1+h}<u_{k}\}$ the
minimum is $u_{n_1+h}$. Define%
\[
A_{h,k}^{1}(n_1)=A_{k,h}(n_1)\cap\{u_{k}<u_{n_1+h}\}
\quad\mbox{and}\quad
A_{h,k}^{2}(n_1)=A_{k,h}(n_1)\cap\{u_{n_1+h}<u_{k}\}.
\]
Consider now the decomposition $Z=Z^{1}+Z^{2}$, where%
\begin{eqnarray*}
Z^{i}_{sut}&=&\sum_{n_{1}=1}^{n-1}\sum_{k=1}^{n_{1}}%
\sum_{h=1}^{n-n_{1}}(-1)^{k+h}\int_{A_{k,h}^{i} (n_1)}
K_{s}^{\otimes(k-1)} \der K_{su} K_{u}^{\otimes(n_1-k+h-1)}
\der K_{ut}\\
&&\hspace*{131.7pt}{}\times K_{t}^{\otimes(n-n_1-h)} \,dW.
\end{eqnarray*}
We fix $j$ and we try to compute the contribution of $Z^{i}_{sut}$ on
the set $%
A_{j}^{n}$ for $i=1,2$. This contribution will be the sum of the
integrals on
the set $A_{j}^{n}\cap A_{k,h}^{i}(n_1)$, for each $k=1,\ldots
,n_{1}$, $%
h=1,\ldots,n-n_{1}$ and for each $n_{1}=1,\ldots,n-1$.

Notice first\vspace*{2pt} that the intersection $A_{j}^{n}\cap A_{k,h}^{1}(n_1)$
is nonempty
only if $k=j$, \mbox{$h=1$} and $u_{n_1}<u_{n_1+1}$ which also implies $j\le
n_1$. Moreover, in this case we have $A_{j,1}^{1}(n_1) \cap\{ u_{n_1}
< u_{n_1+1} \}=A_{j}^{n}$. In this way we obtain that the
contribution of $Z^{1}_{sut}$ on $A_{j}^{n}$ is
%
%
\begin{eqnarray}\label{eq:contribution-Z1}\quad
&&(-1)^{j-1}\sum_{n_{1}=j}^{n-1}\int_{A_{j}^{n}}
K_{s}^{\otimes(j-1)} \der K_{su} K_{u}^{\otimes(n_1-j)} \der
K_{ut} K_t^{\otimes(n-n_1-1)} \,dW \nonumber\\[-8pt]\\[-8pt]
&&\qquad= (-1)^{j-1}\sum_{m=1}^{n-j}\int_{A_{j}^{n}}
K_{s}^{\otimes(j-1)} \der K_{su} K_{u}^{\otimes(m-1)} \der
K_{ut}
K_t^{\otimes(n-m-j)} \,dW,\nonumber
\end{eqnarray}
where we have used the simple change of variables $n_1-j=m-1$.
In the same manner, on the set $A_{j}^{n}\cap A_{k,h}^{2}(n_1)$ we
have $k=n_1$, $n_1+h=j$, which also implies $n_1\le j-1$. Therefore,
the contribution of $Z^{2}_{sut}$ on $A_{j}^{n}$ is%
%
%
\begin{equation}\label{eq:contribution-Z2}
(-1)^{j}\sum_{n_{1}=1}^{j-1}\int_{A_{j}^{n}}
K_{s}^{\otimes(n_1-1)} \der K_{su} K_{u}^{\otimes(j-1-n_1)}
\der K_{ut} K_{t}^{\otimes(n-j)} \,dW.
\end{equation}
One can now easily verify that the sum of (\ref{eq:contribution-Z1})
and (\ref{eq:contribution-Z2}) is equal to the term (\ref{eq:der-Bn-j}).

It remains to prove that the contribution of $Z_{sut}$ to the set
$(\bigcup_j A_{j}^{n})^c$ is zero. For this, observe that $(\bigcup_j
A_{j}^{n})^c$ can be\vspace*{1pt} split into slices $D_{k,p,h}$ of the following
form: for $1\le k\le p\le n-1$, we assume that $u_k < u_{k-1} < \cdots
< u_1$ and
$u_k <u_{k+1} <\cdots< u_{p}$ but $u_{p}>u_{p+1}$. Suppose also that
$1\le h\le n-p$ and that $u_{p+h}$ is the minimum of the coordinates
$u_{p+1} , \ldots, u_n$. Then, for $D_{k,p,h}$ to be a subset of
$
\bigcup_{n_1=1}^n \bigcup_{k,h} A_{k,h}(n_1),
$
we need the further condition $u_{p+h} < u_{p+h+1} <\cdots< u_n$ and
$u_{p+h} < u_{p+h-1} <\cdots< u_{p+1}$. With all these constraints in
mind, it is easily seen that $D_{k,p,h}$ corresponds to two possible
choices of set $A_{k,h}(n_1)$. Indeed, we have
\[
D_{k,p,h}=A_{k,h}(p)=A_{k,h+1}(p-1).
\]
Going back now to the expression (\ref{eq:def-Z-sut}) of $Z_{sut}$, it
is readily checked that the two contributions, respectively, on
$A_{k,h}(p)$ and $A_{k,h+1}(p-1)$, yield two terms with opposite sign,
which cancel out in the sum.

\textit{Step} 2: \textit{proof of the geometric property}
(\ref{eq:geom-rough-path}).
Fix $n,m$ such that $n+m\le\lfloor1/\ga\rfloor$ and let $(s,t)\in
\mathcal{S}_{2,T}$. Consider the product
\begin{eqnarray*}
&&\bb^{\bn}_{st}(i_1,\ldots,i_n) \bb^{\bmm}_{st}(j_1,\ldots,j_m
)\\
&&\qquad=\sum_{j=1}^n \sum_{h=1}^m (-1)^{j+h}
\biggl(\int_{A^n_j} K_s^{\otimes(j-1)} \delta K_{st} K_t^{\otimes(n-j)}
\,dW \biggr) \\
&&\hspace*{62.6pt}{}\times\biggl(\int_{A^m_h} K_s^{\otimes(h-1)} \delta K_{st}
K_t^{\otimes(m-h)} \,dW \biggr),
\end{eqnarray*}
where we have used notation (\ref{eq:def-hat-Bn-2-summarized}) and
where we recall that the sets $A^n_j$ and $A^m_h$ are defined by
\begin{eqnarray*}
A^n_j&=&\{u\in[0,t]^n\dvtx u_j<u_{j-1}<\cdots<u_1, u_j< u_{j+1}<\cdots
<u_n\},
\\
A^m_h&=&\{v\in[0,t]^m\dvtx v_h<v_{h-1}<\cdots<v_1, v_h< v_{h+1}<\cdots
<v_m\}.
\end{eqnarray*}
The product of the two Stratonovich integrals can be expressed as a
Stratono\-vich integral on the region
$A^n_j \times A^m_h$ with respect to the differential
\[
dW_{u_1}(i_1) \cdots dW_{u_n}(i_n)\,dW_{v_1}(j_1) \cdots dW_{v_m}(j_m).
\]
We will make use of the notation $z=(u,v)$, where $z_\alpha=u_\alpha$,
for $\alpha=1,\ldots, n$ and $z_\alpha= v_{\alpha-n}$ for $\alpha=n+1,
\ldots, n+m$.
As in Step 1, the region $A^n_j \times A^m_h$ can be first decomposed
into the union of the disjoint regions $D_{j,h}$ and $E_{j,h}$,
corresponding, respectively, to the additional constraints $\{u_j<v_h\}$
and $\{u_j>v_h\}$ (notice that this decomposition is valid
up to the set $\{u_j=v_h\}$, whose contribution to the stochastic
integral is null).

Consider first the case $\{u_j<v_h\}$. On $D_{j,h}$ the minimum of all
the coordinates $z_\alpha$ is $z_j$.
Then $D_{j,h}$ can be
further decomposed into the disjoint union of the sets
\begin{eqnarray*}
D_{j,h,1}^\pi& =& \{z\in[0,t]^{n+m}\dvtx z_j<z_{\alpha_{j+h-2}} <\cdots<
z_{\alpha_{1}},\\
&&\hspace*{50.6pt}
z_j<z_{\beta_1} <\cdots< z_{\beta_{n-j+1+m-h}}\} \\
&&{} \cap\{z_{n+h} <z_{n+h-1}\},
\end{eqnarray*}
where
\[
\pi( 1,\ldots, n+m)= (\alpha_1, \ldots, \alpha_{j+h-2}, j, \beta_1,
\ldots
, \beta_{n-j+1+m-h})
\]
runs over all permutations of the coordinates $1,\ldots, n+m$ such that
$\pi(j+h-1)=j$ and:

\mbox{}\hphantom{i}(i)
$\alpha_1, \ldots, \alpha_{j+h-2}$ is a permutation of the coordinates
$1,\ldots,j-1$ and $n+1, \ldots, n+h-1$ that preserves the orderings of
the indices $ 1,\ldots, j-1 $ and $ n+1, \ldots, n+h-1 $.

(ii)
$\beta_1, \ldots, \beta_{n-j+1+m-h}$ is a permutation of the
coordinates $j+1, \ldots,n$ and $n+h, \ldots, n+m$ that preserves the
orderings of the indices $ j+1,\ldots, n $ and $ n+h, \ldots, n+m $.

Notice that $\alpha$ is the inverse of a shuffle since it splits an
ordered list into two ordered sublists. The same remark applies to
$\beta$.

Moreover, $D_{j,h}$ can be also
be decomposed into the disjoint union of the sets
%
\begin{eqnarray*}
D_{j,h,2}^{\tpi} &=& \{z\in[0,t]^{n+m}\dvtx z_j<z_{\alpha_{j+h-1}}
<\cdots
< z_{\alpha_1},\\
&&\hspace*{58.8pt}
z_j<z_{\beta_1} <\cdots< z_{\beta_{n-j+m-h}}\} \\
&&{} \cap\{z_{n+h} <z_{n+h+1}\},
\end{eqnarray*}
where
\[
\tpi(1,\ldots, n+m)= (\alpha_1, \ldots, \alpha_{j+h-1}, j, \beta_1,
\ldots
, \beta_{n-j +m-h})
\]
runs over all permutations of the coordinates $1,\ldots, n+m$ such that
$\tpi(j+h)=j$ and:

\mbox{}\hphantom{i}(i)
$\alpha_1, \ldots, \alpha_{j+h-1}$ is a permutation of the coordinates
$1,\ldots,j-1$ and $n+1, \ldots, n+h$ that
preserves the orderings of the indices $ 1,\ldots, j-1 $ and $ n+1,
\ldots
, n+h $.

(ii)
$\beta_1, \ldots, \beta_{n-j +m-h}$ is a permutation of the coordinates
$j+1, \ldots,n$ and $n+h+1, \ldots, n+m$ that preserves the orderings of
the indices $ j+11,\ldots, n $ and $ n+h+1, \ldots, n+m $.

Then, on the set $D_{j,h}$ we write
\begin{eqnarray*}
&& K_s^{\otimes(j-1)} \delta K_{st} K_t^{\otimes(n-j)}K_s^{\otimes
(h-1)} \delta K_{st} K_t^{\otimes(m-h)} \\
&&\qquad =K_s^{\otimes(j-1)} \delta K_{st} K_t^{\otimes(n-j)}K_s^{\otimes
(h-1)} K_t^{\otimes(m-h+1)} \\
&&\qquad\quad{}-K_s^{\otimes(j-1)} \delta K_{st}
K_t^{\otimes(n-j)}K_s^{\otimes h} K_t^{\otimes(m-h)},
\end{eqnarray*}
and the integral
\[
I_{j,h}:= \int_{D_{j,h}} K_s^{\otimes(j-1)} \delta K_{st}
K_t^{\otimes
(n-j)}K_s^{\otimes(h-1)} \delta K_{st} K_t^{\otimes(m-h)} \,dW
\]
can be expressed as the sum $I_{j,h}=I_{j,h}^{+}+I_{j,h}^{-}$, with
\begin{eqnarray*}
I_{j,h}^{+}&=&
\sum_{\pi} \int_{D^\pi_{j,h,1}} (-1)^{j+h-2} \\
&&\hspace*{41.1pt}{}\times\prod_{l=1}^{j+h-2}
K(s,z_{\alpha_l}) \delta K_{st}(z_j) \\
&&\hspace*{41.1pt}{}\times
\prod_{l= 1}^{n-j+1+m-h} K(t,z_{\beta_l}) \,dW_{z_1}(i_1) \cdots
dW_{z_{n+m}}(i_{n+m})
\end{eqnarray*}
and
\begin{eqnarray*}
I_{j,h}^{-}&=&
\sum_{\tpi} \int_{D^{\tpi}_{j,h,2}} (-1)^{j+h-1} \\
&&\hspace*{41.1pt}{}\times\prod_{l=1}^{j+h-1}
K(s,z_{\alpha_l})
\delta K_{st}(z_j) \\
&&\hspace*{41.1pt}{}\times\prod_{l=1}^{n-j+m-h} K(t,z_{\beta_l}) \,dW_{z_1}(i_1) \cdots
dW_{z_{n+m}}(i_{n+m}).
\end{eqnarray*}

Let us handle first the term $I_{j,h}^{+}$: consider the permutation
$\sigma=\pi^{-1}$ of $1, \ldots, n+m$ which maps $\alpha_1, \ldots,
\alpha
_{j+h-2}$ into $1,\ldots, j+h-2$ and $\beta_1, \ldots,\break\beta
_{n-j+1+m-h}$ into
$j+h, \ldots, n+m$, with the additional condition $\sigma(j)=j+h-1$.
If we make this permutation in the coordinates of $I_{j,h}^{+}$ we obtain
\begin{eqnarray*}
I_{j,h}^{+}&=&
\int_{A_{j+h-1}^{n+m}\cap\{z_\nu<z_\eta\}} (-1)^{j+h-2} \\
&&\hspace*{65.1pt}{}\times\prod
_{l=1}^{j+h-2} K(s, z_l) \delta K_{st}(z_{j+h-1} ) \\
&&\hspace*{65.1pt}{}\times\prod_{l=j+h}^{n+m} K(t, z_l)
\,dW_{z_1} (k_1) \cdots dW_{z_{n+m}} (k_{n+m}),
\end{eqnarray*}
where $k_1, \ldots, k_{n+m}$ is a permutation of the indexes $i_1,
\ldots
, i_{n }, j_1, \ldots, j_m$ defined by
$k_\ell=i_{\sigma(\ell)}$ if $1\le\sigma(\ell)\le n$ and $k_\ell
=j_{\sigma(\ell)}$ if $n+1\le\sigma(\ell)\le n+m$, and where $\nu
,\eta
$ are defined by
\begin{eqnarray*}
\nu&=&\min\bigl\{ i\ge j+h\dvtx k_i \in\{j_1,\ldots, j_m\}\bigr\},\\
\eta&=&\max\bigl\{ i\le j+h-2\dvtx k_i \in\{j_1,\ldots, j_m\}\bigr\}.
\end{eqnarray*}

In the same way, we can consider a permutation $\sigma=\tilde{\pi}
^{-1}$ in the coordinates $z_i$ which maps $\alpha_1, \ldots, \alpha
_{j+h-1}$ into $1,\ldots, j+h-1$ and $\beta_1, \ldots,\beta
_{n-j+m-h}$ into
$j+h+1, \ldots, n+m$, and $\sigma(j)=j+h$. If we make this permutation
in the coordinates of $I_{j,h}^{-}$ we obtain
\begin{eqnarray*}
I_{j,h}^{-}&=&
\int_{A_{j+h}^{n+m}\cap\{z_\nu> z_\eta\}} (-1)^{j+h-1}\\
&&\hspace*{57.8pt}{}\times \prod
_{l=1}^{j+h-1} K(s, z_l) \delta K_{st}(z_{j+h-1} ) \\
&&\hspace*{57.8pt}{}\times \prod_{l=j+h+1}^{n+m} K(t, z_l)
\,dW_{z_1} (k_1) \cdots dW_{z_{n+m}} (k_{n+m}),
\end{eqnarray*}
where again $k_1, \ldots, k_{n+m}$ is a permutation of the indexes $i_1,
\ldots, i_{n }, j_1, \ldots, j_m$ defined by
$k_\ell=i_{\sigma(\ell)}$ if $1\le\sigma(\ell)\le n$ and $k_\ell
=j_{\sigma(\ell)}$ if $n+1\le\sigma(\ell)\le n+m$, and where $\nu
,\eta
$ are now defined by
\begin{eqnarray*}
\nu&=&\min\bigl\{ i\ge j+h+1\dvtx k_i \in\{j_1,\ldots, j_m\}\bigr\},\\
\eta&=&\max\bigl\{ i\le j+h-1\dvtx k_i \in\{j_1,\ldots, j_m\}\bigr\}.
\end{eqnarray*}

When we sum these integrals over all permutations $\sigma$ of the
above type, that is $\sigma=\pi^{-1}$ or $\sigma=\tilde{\pi} ^{-1}$,
and also over
$j$ and $h$, we obtain $\sum_{\bar k\in \mathrm{Sh}(\bar\imath
,\bar\jmath)} \bb^{\bn+\bmm,1}_{st}\break (k_1,\ldots,$ $k_{n+m})$,
where
\begin{eqnarray*}
&&\bb^{\bn+\bmm,1}_{st}(k_1,\ldots,k_{n+m})\\
&&\qquad= \sum_{p=1, k_p\in\{
i_1,\ldots, i_n\}}^{n+m}\int_{A_{p}^{n+m} } (-1)^{p-1} \\
&&\qquad\quad\hspace*{89.5pt}{}\times \prod_{l=1}^{p-1}
K(s, z_l) \delta K_{st}(z_{p} ) \\
&&\qquad\quad\hspace*{89.5pt}{} \times\prod_{l=p+1}^{n+m} K(t, z_l)
\,dW_{z_1} (k_1) \cdots dW_{z_{n+m}} (k_{n+m}).
\end{eqnarray*}

In a similar manner we could show that the sum of the integrals over
$E_{j,h}$ give rise to
$\sum_{\bar k\in \mathrm{Sh}(\bar\imath,\bar\jmath)} \bb
^{\bn+\bmm
,2}_{st}(k_1,\ldots,k_{n+m})$, for $h=1,\ldots, m$,
where
\begin{eqnarray*}
&&\bb^{\bn+\bmm,2}_{st}(k_1,\ldots,k_{n+m})\\
&&\qquad= \sum_{p=1, k_p\in\{
j_1,\ldots, j_m\}}^{n+m}\int_{A_{p}^{n+m} } (-1)^{p-1} \\
&&\qquad\quad\hspace*{93.1pt}{}\times\prod_{l=1}^{p-1}
K(s, z_l) \delta K_{st}(z_{p} ) \\
&&\qquad\quad\hspace*{93.1pt}{}\times\prod_{l=p+1}^{n+m} K(t, z_l)
\,dW_{z_1} (k_1) \cdots dW_{z_{n+m}} (k_{n+m}).
\end{eqnarray*}
Taking into account the two contributions $\bb^{\bn+\bmm,1}_{st}$ and
$\bb^{\bn+\bmm,2}_{st}$, the proof of the geometric property is now
easily finished.

\textit{Step} 3: \textit{proof of the regularity property}.
As in Proposition \ref{prop:regularity-B2}, the fact that $\bb^{\bn}$
belongs to $\cac_2^{n\ga}$ for any $\ga<H$ is an easy consequence of
the moment estimate of Proposition \ref{prop:moments-B-n}, plus a
simple induction procedure.

Indeed, assume that $\bb^{\bk}\in\cac_2^{k\ga}( (\mathbb
{R}^{d})^{\otimes k})$ for any $k\le n-1$. Then Lemma \ref{GRR-2} gives
here that $\cn[\bb^\bn;\cac_2^{n\ga}( \mathbb{R}^{d^2})]\lesssim
A+D$, with
\[
A= \biggl(\int_{\cs_{2,T}} \frac{| \bb^{\bn}_{uv}
|^{2p}}{|u-v|^{2n \ga
p+4}} \,du \,dv \biggr)^{ 1/({2p})}
\quad\mbox{and}\quad
D= \cn[ \delta\bb^{\bn}; \cac_3^{n\ga}( \mathbb{R}^{d^2})
].
\]
Furthermore, since we have seen that $\bb^\bn$ satisfies the
multiplicative property (\ref{eq:multiplicativity}), then $D$ is easily
shown to be almost surely finite thanks to our induction hypothesis.
Finally, the quantity $\be[A]$ can be bounded along the same lines as
in Proposition \ref{prop:regularity-B2}, except that Proposition \ref
{prop:moments-B-n} is used instead of Proposition \ref
{prop:bnd-second-moment-B2-st}.
\end{pf*}

\section{Relationship with other iterated integrals}
\label{sec:relation-other}

This section is devoted to a comparison of the rough path above fBm we
have just constructed with other existing iterated integrals. We first
treat the case of canonical (or pathwise) integrals defined in \cite
{CQ,FV-bk}, focusing on the double iterated integral case. Then we
shall try to replace our construction into the general context of
Fourier normal ordering as introduced in \cite{Un09b}.

\subsection{Comparison with the canonical double iterated integral}

Consider $1/4<H<1$. We wish to compare $\bb^{\mathbf{2}}$ defined by (\ref
{eq:def-B2}) with the increment $\bb^{\mathbf{2},\mathrm{p}}$, where
%
%
\begin{equation}\label{eq:def-levy-pathwise}
\bb^{\mathbf{2},\mathrm{p}}_{st}:=\int_{s<u_1<u_2<t} dB_{u_1}(i_1) \,dB_{u_2}(i_2)
\end{equation}
is interpreted in the following way:

\mbox{}\hphantom{ii}(i) If $1/2<H<1$, $\bb^{\mathbf{2},\mathrm{p}}_{st}$ is defined in the
Young sense (or equivalently in the Stratonovich sense of Malliavin
calculus--see \cite{Nu06}).

\mbox{}\hphantom{i}(ii) If $H=1/2$, $\bb^{\mathbf{2},\mathrm{p}}_{st}$ corresponds to a
Stratonovich integral with respect to Brownian motion.

(iii) When $1/4<H<1/2$, $\bb^{\mathbf{2},\mathrm{p}}_{st}$ is defined by
a limiting procedure in \cite{CQ,FV-bk}, but is also shown in \cite{CQ}
to correspond to a Stratonovich integral in the Malliavin calculus
sense.

In all those cases, $\bb^{\mathbf{2},\mathrm{p}}$ can thus be defined thanks to
Malliavin calculus tools, and is also thought of as the canonical
double iterated integral for $B$. We shall keep this definition in mind
in the sequel, and refer to \cite{Nu06} for further definitions of
Malliavin calculus. Notice that ``p'' in in our notation $\bb^{\mathbf{2}
,\mathrm{p}}$ stands for pathwise.

Our comparison result for double iterated integrals can be read as follows:
\begin{proposition}\label{prop:comp-double}
Consider a $d$-dimensional fBm $B$ with Hurst index $1/4<H<1$. Let $\bb
^{\mathbf{2}}$ be the increment defined by (\ref{eq:def-B2}), and $\bb^{\mathbf{2}
,\mathrm{p}}$ defined by (\ref{eq:def-levy-pathwise}). For $0<b<a<t$,
set $\psi
_t(a,b)=\int_{a}^{t} K(v,a) \partial_{v}K(v,b) \,dv$. Then for $H\in
(1/4,1)\setminus\{1/2\}$, we have $\bb^{\mathbf{2}}-\bb^{\mathbf{2},\mathrm{p}}=\der f$,
where $f\dvtx\R_+\to\R^{d^2}$ is the process defined by
%
%
\begin{eqnarray}\label{eq:def-f-t}
&&f_t(i_1,i_2)\nonumber\\
&&\qquad=
\int_{0<u_1<u_2<t} \psi_{t}(u_2,u_1) \,dW_{u_1}(i_1) \,dW_{u_2}(i_2)\\
&&\qquad\quad{}-\int_{0<u_2<u_1<t} \psi_{t}(u_1,u_2) \,dW_{u_1}(i_1)
\,dW_{u_2}(i_2). \nonumber
\end{eqnarray}
In particular, $f(i_1,i_2)\equiv0$ if $i_1=i_2$. For $H=1/2$, one gets
the relation $\bb^{\mathbf{2}}-\bb^{\mathbf{2},\mathrm{p}}=0$.
\end{proposition}
\begin{remark}
Consider the antisymmetric parts $\bb^{\mathbf{2},a}$ and $\bb^{\mathbf{2},\mathrm{p},a}$
of $\bb^{\mathbf{2}}$ and $\bb^{\mathbf{2},\mathrm{p}}$, respectively, considered as
matrix-valued increments. These objects are usually referred to as L\'
{e}vy areas of $B$. Then it is readily checked that $\bb^{\mathbf{2},a}-\bb
^{\mathbf{2}
,\mathrm{p},a}=\der f$ as well.
\end{remark}
\begin{pf*}{Proof of Proposition \ref{prop:comp-double}}
It is easily shown, thanks to Proposition \ref{prop:algebraic-B2}, that
$\der\bb^{\mathbf{2}}=\der\bb^{\mathbf{2},\mathrm{p}}$. We thus know that $\bb^{\mathbf{2}
}-\bb
^{\mathbf{2},\mathrm{p}}=\der f$ for a certain function $f\in\cac_{1}$.
Furthermore, a possible choice for $f$ (unique up to constants) is simply
\[
f_t=\bb^{\mathbf{2}}_{0t}-\bb^{\mathbf{2},\mathrm{p}}_{0t}.
\]
We shall try to simplify the latter expression, and distinguish 3 cases:

\textit{Case} 1: $H=1/2$. In this situation the computations differ
slightly from the case $1/4<H<1/2$, since in $K_t(u)=\mathbf{1}_{[0,t]}(u)$
instead of the expression given by (\ref{eq:def-K}). However, the
relation $\bb^{\mathbf{2}}-\bb^{\mathbf{2},\mathrm{p}}=0$ is easily verified directly.

\textit{Case} 2: $1/2<H<1$. We treat this situation first, since it is
technically simpler than the rougher case $H<1/2$. The kernel $K$ is
given here by \cite{Nu06}, equation (5.8), instead of (\ref{eq:def-K}),
but still satisfies a relation of the form (\ref{eq:bnd-kernel}), which
allows to translate many of the bounds in Section \ref
{sec:intg-order-2}. In particular, both increments $\bb^{\mathbf{2},\mathrm{p}}$
and $\bb^{\mathbf{2}}$ are well defined. However, when $H>1/2$ we cannot assume
$\der f:=\bb^{\mathbf{2}}-\bb^{\mathbf{2},\mathrm{p}}$ lies in~$\cac_{1}^{2\ga}$, since
Lemma \ref{lem:bnd-K-2} cannot be applied anymore (additionally, $f\in
\cac_{1}^{2\ga}$ would mean $f\equiv\mathrm{Constant}$). We shall thus
only work with $f\in\cac_{1}^{\ga}$.

In order to find an amenable expression for $f$, decompose again $\bb
^{\mathbf{2}}_{0t}$ into $\hbb^{\mathbf{2},1}_{0t}+\hbb^{\mathbf{2},2}_{0t}$. Thanks to the
fact that $K(0,\cdot)\equiv0$, it is then easily seen from equation
(\ref{eq:def-B2-2}) that $\hbb^{\mathbf{2},2}_{0t}=0$. Thus, reading (\ref
{eq:def-B2-2}) in our particular situation yields
%
%
\begin{equation}\label{eq:bb2-0t}
\bb^{\mathbf{2}}_{0t}(i_1,i_2)=\int_{u_1<u_2} K_t(u_1) K_t(u_2)
\,dW_{u_1}(i_1) \,dW_{u_2}(i_2).
\end{equation}

For $H>1/2$, a suitable expression for $\bb^{\mathbf{2},\mathrm{p}}_{0t}$,
obtained by means of a Fubini-type arguments, is
\begin{eqnarray*}
\bb^{\mathbf{2},\mathrm{p}}_{0t}(i_1,i_2)&=&
\int_{0}^{t} \biggl(\int_{u_2}^{t} \partial_{v} K_{v}(u_2) B_v(i_1) \,dv
\biggr)\,dW_{u_2}(i_2) \\
&=&
\int_{[0,t]^2} \biggl(\int_{u_1\vee u_2}^{t} \partial_{v} K_{v}(u_2)
K_{v}(u_1) \,dv \biggr)
\,dW_{u_1}(i_1) \,dW_{u_2}(i_2)\\
:\!&=&J_{st}^{1}+J_{st}^{2},
\end{eqnarray*}
where
\begin{eqnarray*}
J_{st}^{1}&=&
\int_{0<u_1<u_2<t} \biggl(\int_{u_2}^{t} \partial_{v} K_{v}(u_2)
K_{v}(u_1) \,dv \biggr)
\,dW_{u_1}(i_1) \,dW_{u_2}(i_2), \\
J_{st}^{2}&=&
\int_{0<u_2<u_1<t} \biggl(\int_{u_1}^{t} \partial_{v} K_{v}(u_2)
K_{v}(u_1) \,dv \biggr)
\,dW_{u_1}(i_1) \,dW_{u_2}(i_2).
\end{eqnarray*}
Owing to a simple integration by parts argument, we have
\[
\int_{u_2}^{t} \partial_{v} K_{v}(u_2) K_{v}(u_1) \,dv
= K_t(u_1) K_t(u_2) - \int_{u_2}^{t} K_{v}(u_2) \partial
_{v}K_{v}(u_1) \,dv,
\]
and hence
\begin{eqnarray*}
J_{st}^{1}&=&
\int_{0<u_1<u_2<t}
\biggl[K(t,u_1) K(t,u_2)\\
&&\hspace*{53.5pt}{} - \int_{u_2}^{t} K_{v}(u_2) \partial
_{v}K_{v}(u_1) \,dv \biggr]
\,dW_{u_1}(i_1) \,dW_{u_2}(i_2)\\
&=&\bb^{\mathbf{2}}_{0t}(i_1,i_2)
-\int_{0<u_1<u_2<t}
\biggl(\int_{u_2}^{t} K_{v}(u_2) \partial_{v}K_{v}(u_1) \,dv \biggr)
\,dW_{u_1}(i_1) \,dW_{u_2}(i_2).
\end{eqnarray*}
Gathering all the expressions we have obtained so far and recalling our
notation $\psi_t(a,b)=\int_{a}^{t} K(v,a)$ $\partial_{v}K(v,b) \,dv$
for $0<b<a<t$, the proof of (\ref{eq:def-f-t}) is now readily completed.

\textit{Case} 3: $1/4<H<1/2$. Many of the computations of Case 2 can be
reproduced here, and we will just outline the main differences.

Since $H<1/2$, Lemma \ref{lem:bnd-K-2} and the results in \cite
{CQ,FV-bk} assert that $f$ is an element of $\cac_1^{2\ga}$ in the
current situation. Moreover, (\ref{eq:bb2-0t}) is still valid for
$H<1/2$, so that we only have to find an alternative expression for
$\bb
^{\mathbf{2},\mathrm{p}}_{0t}$.

Thanks to expression (5.29) in \cite{Nu06}, one can write
\[
\bb^{\mathbf{2},\mathrm{p}}_{0t}(i_1,i_2)=
\int_{0}^{t} [K_t^*B(i_1) ]_{u_2} \,dW_{u_2}(i_2):=L_{st}^{1}+L_{st}^{2},
\]
where
\begin{eqnarray*}
L_{st}^{1}&=&
\int_{0}^{t} \biggl(\int_{u_2}^{t} \partial_{v}K_v(u_2) \der
B_{u_2v}(i_1) \,dv\biggr)\,dW_{u_2}(i_2), \\
L_{st}^{2}&=&
\int_{0}^{t} K_t(u_2) B_{u_2}(i_1) \,dW_{u_2}(i_2).
\end{eqnarray*}
Then the same kind of arguments as for Case 2 (Fubini-type relations
and integration by parts for $K$) yield
$L_{st}^{1}=L_{st}^{11}+L_{st}^{12}$, with
\begin{eqnarray*}
L_{st}^{11}&=&
\int_{0<u_1<u_2<t} [K_t(u_2) \der K_{tv}(u_1)-\psi_t(u_2,u_1) ]
\,dW_{u_1}(i_1)
\,dW_{u_2}(i_2), \\
L_{st}^{12}&=&
\int_{0<u_2<u_1<t} \psi_t(u_1,u_2) \,dW_{u_1}(i_1) \,dW_{u_2}(i_2).
\end{eqnarray*}
It is also easily checked that
\[
L_{st}^{2}= \int_{0<u_1<u_2<t} K_t(u_2) K_{u_2}(u_1)
\,dW_{u_1}(i_1) \,dW_{u_2}(i_2).
\]
Recalling then $\bb^{\mathbf{2},\mathrm
{p}}_{0t}(i_1,i_2)=L_{st}^{11}+L_{st}^{12}+L_{st}^{2}$ we end up, after
some elementary algebraic manipulations, with expression (\ref{eq:def-f-t}).
\end{pf*}

\subsection{Comparison with the construction by Fourier normal ordering}

It is impossible to reproduce here the elegant formalism on which \cite
{Un09b} is based. We will thus just content ourselves with giving some
hints on the possibility to link our construction with the general
Fourier normal ordering program described in the latter reference.

One of the starting points in \cite{Un09b} is that any iterated
integral with respect to a function $X$ can be encoded by a tree whose
vertices are decorated by $\{1,\ldots,d\}$ if $X$ is $\R^d$-valued. A
Hopf algebra structure is usually added to this set of trees after the
pioneering work of Connes and Kreimer \cite{CK}, the resulting
structure being denoted by $\bh$.

In case of a smooth function $X$, consider $\bx^{\bn}(i_1,\ldots,i_n)$
defined by (\ref{eq:def-Xn-intro}) in the Riemann sense. Let also $\si
\in\Sigma_n$ be a permutation of $\{1,\ldots,n\}$. When one wishes to
express $\bx^{\bn}(i_{\si(1)},\ldots,i_{\si(n)})$ in terms of integrals
involving the indices $i_1,\ldots,i_n$ in this exact order, one is
naturally led to use operations on trees and forests, encoded in the
Hopf algebra structure alluded to above. After a huge amount of
formalization explained in \cite{Un09b}, this allows us to write, for
$0\le s<t\le T$,
%
%
\begin{equation}\label{eq:dcp-character}
\bx^{\bn}_{st}(i_1,\ldots,i_n)=
[(\chi_X^s\circ S) * \chi_X^t ](\T_n),
\end{equation}
where $\T_n$ designates the trunk tree of order $n$ decorated by
$i_1,\ldots,i_n$, $\chi_X^s$ is a character defined on $\bh$, $S$
stands for the antipode operation characteristic of Hopf algebras and
$*$ is a certain convolution product defined on $\bh$. Notice that the
equivalent of decomposition (\ref{eq:dcp-character}) in \cite{Un09b}
involves some so-called \textit{skeleton integrals}, which refer to
Fourier transform techniques. Our character $\chi_X^s$ is defined in
direct coordinates, in concordance with the Volterra-type
representation we have chosen.

Still in case of a smooth function $X$, a further analysis of the terms
$\chi_X^s$ allows the decomposition (valid for a multiindex
$(j_1,\ldots
,j_n)$ assimilated with its associated trunk tree)
%
%
\begin{equation}\label{eq:dcp-forest}
\chi_X^s(j_1,\ldots,j_n)=\sum_{\si\in\Sigma_n} I_s(\T^{\si}),
\end{equation}
where $\T^{\si}$ is a forest called permutation graph (see \cite{Un09b},
Lemma 1.5). This kind of decomposition is the one which has to be
generalized to nonsmooth situations. In our context, $I_s(\T^{\si})$
is obviously a Wiener multiple integral weighted by the kernel~$K$,
whose generic form is given by
\[
I_s(\T^{\si})=\int_{u_{\si(1)}<\cdots<u_{\si(n)}} \prod_{j=1}^{n}
K_{a_j}(u_j) \,dW_{u_j}(i_j),
\]
where each $a_j=s$ or $t$ according to the permutation graph under
consideration.

The algorithm set up in \cite{Un09b} in order to cope with nonsmooth
situations basically replaces the integrals $I_s(\T^{\si})$ for any
$\T
^{\si}$ having more than two vertices by something smoothed in Fourier
coordinates. Our approach is simpler (and rougher), in the sense that
we replace all those integrals by 0. We are thus just left with the
permutation graph $\T^{\si_0}$ corresponding to $\si_0\dvtx(1,\ldots
,n)\mapsto(n,\ldots,1)$, which is the only one containing trees
reduced to a root (see \cite{FU} for further explanations). It can then
be shown that, reading \cite{Un09b}, Lemma 3.6, in this context leads to
our definition (\ref{eq:def-Bn-2}) of the multiple iterated integral
with respect to $B$. In a sense, our construction is thus included in
the broader context of \cite{Un09b}. Nevertheless, let us insist on the
fact that we provide a simple and direct alternative approach to the problem.


%
\printaddresses

\end{document}